\DeclareMathOperator{\sqf}{sqf}
\DeclareMathOperator{\Res}{Res}
\DeclareMathOperator{\Sym}{Sym}
\DeclareMathOperator{\Spec}{Spec}
\DeclareMathOperator{\abs}{abs}
\DeclareMathOperator{\adelic}{adelic}
\DeclareMathOperator{\Cl}{Cl}
\DeclareMathOperator{\triv}{triv}
\DeclareMathOperator{\sq}{sq}
\newtheorem{theorem}{Theorem}[section]
\newtheorem{cor}[theorem]{Corollary}
\newtheorem{lemma}[theorem]{Lemma}
\newtheorem{prop}[theorem]{Proposition}
\newtheorem{definition}[theorem]{Definition}
\newtheorem{lem}[theorem]{Lemma}
\newtheorem*{lemma*}{Lemma}
\numberwithin{equation}{section}
\def\bbR{ {\mathbb R}}
\def\bbN{ {\mathbb N}}
\def\bbQ{ {\mathbb Q}}
\def\bbZ{ {\mathbb Z}}
\def\bbP{ {\mathbb P}}
\def\bbC{ {\mathbb C}}
\def\fkX{ {\mathfrak X}}
\def\cO{ {\mathcal O} }
\begin{document}
\sloppy 

\title{Counting Rational Points on the Stacky $\Sym^2 \bbP^1$}
\author{John Yin\footnote{The author is affiliated with the University of Wisconsin-Madison}}
\maketitle

\begin{abstract}
    We prove the weak form of the generalized Batyrev-Manin-Malle conjecture formulated in \cite{ellenberg2021heights} for the stack $\Sym^2\bbP^1 := (\bbP^1 \times \bbP^1)/S_2$, where the $S_2$ action just permutes the two coordinates. In particular, we show that the diagonal $\Delta \subset \Sym^2\bbP^1$ is an accumulating substack.
\end{abstract}

\tableofcontents

\section{Introduction}


Let $\fkX$ be the stack $\Sym^2 \bbP^1:=(\bbP^1 \times \bbP^1)/S_2$, where $S_2$ is the symmetric group with two elements, acting via permutation of coordinates. In this paper, we will prove the weak form of the generalized Batyrev-Manin-Malle conjecture formulated in \cite{ellenberg2021heights} for $\fkX$. There is also a stacky Batyrev-Manin conjecture formulated in \cite{darda2022batyrevmanin}, which further predicts the existence of a leading constant. We do not prove this stronger conjecture, though our proof does not give any indication that the leading constant does not exist. Other point counts with respect to the stacky height defined in \cite{ellenberg2021heights} has been done. \cite{nasserden_xiao_2020} establishes asymptotic bounds on point counts of $\bbP^1_{\bbQ}$ with $3$ stacky points $\{0,1,\infty\}$ and the residue gerbes all equal to $\mu_2$. \cite{phillips_2022} establishes point counts of weighted projective spaces. \cite{boggess2020counting} establishes asymptotic bounds on point counts on certain modular curves.

We perform this count by considering the data that $\fkX$ parametrizes. That is, for any field $k$, the $k$-points of $\fkX$ correspond to a quadratic \'etale extension, along with an element in that extension. Most of the time, the element in the extension determines the quadratic \'etale extension, unless that element is in $k$. Thus, counting algebraic numbers of degree $2$ over $k$ essentially correspond to counting the $k$-points of $\fkX$. Counting algebraic numbers of a fixed degree and bounded absolute height over $\bbQ$ was first done in \cite{masser_vaaler}. The authors approach this problem by counting the polynomials of a fixed degree whose roots have bounded absolute height, instead of the algebraic numbers themselves. Such polynomials have coefficients varying in a ``nice" region, and they are able to apply the principle of Lipschitz to obtain bounds on the number of such polynomials. 

In this paper, we will carry out essentially the same count, except with respect to the stacky height in \cite{ellenberg2021heights}. The main difficulty is that while Masser and Vaaler dealt with only the absolute height of an algebraic number, the stacky height involves both the absolute height and the discriminant of the number field the algebraic number generates. Thus, parametrizing by polynomials does not lead to nice regions, as the discriminant of the number field generated by a polynomial is a very discontinuous function in terms of the coefficients of the polynomial. As a result, we have to employ different methods, resulting in worse asymptotic bounds than those in \cite{masser_vaaler}. However, these bounds turn out to be enough to prove the weak form of the generalized Batyrev-Manin-Malle conjecture in the context of $\fkX=\Sym^2 \bbP^1$.

Furthermore, in the setting of Batyrev-Manin conjecture, which deals with point counts on a Fano variety $X$, there is a notion of ``accumulating subvariety" $C \subset X$. It turns out that it could happen that ``most" of the points of bounded height lie on this $C$, despite the fact that it is of a lower dimension. In these cases, the Batyrev-Manin conjecture makes predictions of points on $X \setminus C$. In our case, we will show that the diagonal $\Delta \subset \Sym^2 \bbP^1$ is an ``accumulating substack." We will show that point counts on this substack are on the order of $B^2$ in Theorem \ref{thm:diagonal count}, using just elementary number theory. Points off this substack correspond to points in the quadratic \'etale extension $\bbQ^2$, which we call ``split points," or an algebraic number in a quadratic number field, which we call ``non-split points." We will show that the number of split, non-diagonal points is on the order of $B \log B$ in Theorem \ref{thm:off-diagonal 1}. Finally, we will show that the number of non-split, non-diagonal points corresponding to quadratic field extensions is $\Theta(B \log B)$ in Theorem \ref{thm:main}. Theorem \ref{thm:off-diagonal 1} also just uses elementary number theory, while Theorem \ref{thm:main} is the hardest to prove, and is the main goal of this paper.

The approach we to tackle our main theorem is by separating the non-split, non-diagonal points of $\Sym^2 \bbP^1$ we count into two pieces. The non-split points of $\Sym^2\bbP^1(\bbQ)$ now truly do correspond to algebraic numbers $\alpha$ of degree $2$, and so correspond further to just quadratic polynomials up to scaling by taking their minimal polynomial. Furthermore, the heights of elements of $\Sym^2 \bbP^1(\bbQ)$, expressed in terms of their minimal polynomials $ax^2+bx+c$ representing them, is approximately $H(a:b:c)^2 \sqrt{|\sqf(b^2-4ac)|}$, where $H(a:b:c)$ is the usual height on $\bbP^2$ and $\sqf$ denotes the squarefree part. Thus, the two pieces we separate into are a piece containing quadratic polynomials $f$ where $|\sqf(b^2-4ac)|$ is small, and a piece where $|\sqf(b^2-4ac)|$ is big. In the situation that $|\sqf(b^2-4ac)|$ is small, we use L-functions to parametrize quadratic polynomials which generate a quadratic extension $k$ with fixed discriminant, then apply a Tauberian theorem. In the situation that $|\sqf(b^2-4ac)|$ is big, we switch perspectives and notice that $b^2-4ac$ is divisible by a small square, and count quadratic polynomials of bounded height whose discriminant is divisible by a small square.

The methods in this paper does not immediately allow one to prove weak form of the generalized Batyrev-Manin-Malle conjecture for higher $\Sym^n \bbP^1$. This problem is in general difficult. An element of $\Sym^n\bbP^1(\bbQ)$ is a pair $(k,\alpha)$, where $k$ is an \`etale extension of degree $n$, and $\alpha$ is an element of $k$. The height of such a pair $(k,\alpha)$ is $H_{\abs}(\alpha)^{2n}\sqrt{|\Delta_k|},$ where $H_{\abs}$ is the absolute height and $\Delta_k$ is the absolute discriminant of $k$. By \cite{vaaler2013note}, there exists a primitive element $\alpha_k$ of height bounded by $O(|\Delta_k|^{1/2n})$ in $k$. So, the height of the pairs $(k, \alpha_k)$ would be $O(|\Delta_k|^{3/2})$. Now, the weak form of the generalized Batyrev-Manin-Malle conjecture in this case would predict that the number of such pairs of height bounded by $B$ is in particular bounded by $B \log B$, and so there must be at most $B \log B$ extensions of degree $n$ whose discriminant is bounded by $B^{2/3}$. In other words, the number of extensions of bounded degree and discriminant $O(B^{3/2} \log B)$, independent of $n$. Thus, the weak form of the generalized Batyrev-Manin-Malle conjecture for $\Sym^n \bbP^1$ is much stronger than the state of the art bounds \cite{lemke2022upper} on the number of \textit{number fields} of fixed degree $n$ and bounded discriminant $B$, which is currently $O(B^{c\log(n)^2})$.

\textit{Acknowledgements.} We would like to thank Jordan Ellenberg, Jianhui Li, and Simon Marshall for helpful discussions.



\section{Notations and Conventions}
Let $f,g:\bbZ^{\geq 0} \to \bbR$ be functions.
\begin{itemize}
    \item We say $f(n) = O(g(n))$ if there exists $c>0$, $k$ so that $f(n) \leq c g(n)$ for all $n \geq k$. 
    \item We say $f(n) = \Omega(g(n))$ if there exists $c>0$, $k$ so that $f(n) \geq c g(n)$ for all $n \geq k$. 
    \item We say $f(n) = \Theta(g(n))$ if $f(n)=O(g(n))$ and $f(n) = \Omega(g(n))$. 
    \item We will use $f(n)=O_{\epsilon, \delta, \dots}(g(\epsilon, \delta, \dots,n))$ to denote that the constant $c$ is dependent on $\epsilon, \delta, \dots$. 
    \item We will use $\phi(n)$ to denote the Euler phi function, so that $\phi(n) = |(\bbZ/n\bbZ)^\times|$.
    \item We will denote $\omega(n) = \#\{p \text{ prime}:p|n\}$.
    \item We will use $\mu(n)$ to denote the Mobius function, so that $\mu(n) = 0$ if $n$ is not squarefree, and $\mu(n) = (-1)^{\omega(n)}$ otherwise.
    \item We will use $\sqf(n)$ to denote the squarefree part of $n$. More explicitly, this is a multiplicative function so that $\sqf(p^k)=\begin{cases}1 & 2|k\\
    p & 2 \nmid k \end{cases}$ We also define $\sqf(-1)=-1$, so that $\sqf(-n)=-\sqf(n)$.
    \item Define also $\sq(n) = \sqrt{n/\sqf(n)}$. This is an integer by the definition of $\sqf$.
    \item We define $\Res_{s=a}f(s)$ to be the residue of $f$ at $a$. In other words, $\Res_{s=a}f(s) := \lim_{s \to a} (s-a)f(s)$.
    \item We will always denote height functions with letters $h$ and $H$. Whenever we define $h$, we will define $H(x)$ to be $e^{h(x)}$.
\end{itemize}

\section{Overview}

Let $\fkX:=\Sym^2 \bbP^1 = (\bbP^1 \times \bbP^1)/S_2$, where $S_2$ acts on the two coordinates by permuting them. We will study point counts on this stack.

\begin{definition}
A rational point $x \in \fkX(\bbQ)$ corresponds to a $S_2$-equivariant map $f:\Spec(K) \to \bbP^1 \times \bbP^1$, where $K$ is a degree $2$ etale extension of $\bbQ$. If $K = \bbQ \oplus \bbQ$, then we call $x$ a \textbf{split} point. Otherwise, we call $x$ \textbf{non-split}. Furthermore, if the map $\Spec(K) \to \bbP^1 \times \bbP^1$ factors through $\Spec(K) \to \Spec(\bbQ)$, we call $x$ a \textbf{diagonal} point.
\end{definition}


We may describe non-split points $x$ by the $S_2$-equivariant map $f:\Spec(K) \to \bbP^1 \times \bbP^1$, corresponding to an ordered pair $(\alpha,\beta) \in \bbP^1(K) \times \bbP^1(K)$, which are equivariant under the two actions 
\begin{enumerate}
    \item $(\alpha,\beta) \to (\bar \alpha,\bar \beta)$
    \item and $(\alpha,\beta) \to (\beta, \alpha)$.
\end{enumerate} In conclusion, the datum of an non-split point is equal to the datum of a single $\alpha \in \bbP^1(K)$. The same reasoning applies for split points, except given $(a_0,a_1) \in \bbP^1(\bbQ)$, the notation $\overline{(a_0,a_1)}$ is just $(a_1,a_0)$. In this case we obtain that the datum of a split point is equal to the datum of an ordered pair $(a_0,a_1) \in \bbP^1(\bbQ) \times \bbP^1(\bbQ)$.

Let $K$ be a global field. For $x \in \fkX(K)$, we define $h(x)$ to be the height associated to the tangent bundle of $\fkX$. By the calculations in \cite[Section 3.6]{ellenberg2021heights}, we obtain that given a point $x \in \fkX(\bbQ)$, $$h(x)=\begin{cases}
2h_{\abs}(a)+2h_{\abs}(b) & x \in \fkX(\bbQ) \text{ split, associated with }(a,b) \in \bbP^1(\bbQ) \times \bbP^1(\bbQ)\\
4h_{\abs}(\alpha) + \frac{1}{2}\log |\Delta_{\bbQ[\alpha]}| & x \in \fkX(\bbQ) \text{ non-split, associated with } \alpha \in \bbP^1(K)\end{cases},$$ where $h_{\abs}$ is the logarithmic absolute Weil height. 

Now define $H_{\abs}(x)=e^{h_{\abs}(x)}$ to be the exponential Weil height, and also define $H(x)=e^{h(x)}$. Our goal is to count $\fkX(\bbQ)$ with respect to $H(x)$. Counting split and diagonal points is easy, involving only some elementary number theory, and we may obtain the exact asymptotic. However, counting non-split points is more difficult and instead we only obtain an asymptotic upper and lower bound with the same magnitude.

We begin with the main statement of the paper, which shows that the diagonal is an accumulating subset.
\begin{theorem}\label{thm:intuitive main theorem}
    Assuming GRH, we have that $$\{x \in \fkX(\bbQ) : x \text{ diagonal}, H(x) \leq B\}=\Theta(B^2),$$ and $$\{x \in \fkX(\bbQ) : x \text{ off-diagonal}, H(x) \leq B\} = \Theta(B \log B).$$
\end{theorem}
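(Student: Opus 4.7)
The plan is to reduce the statement to the three asymptotic counts advertised in the overview, which partition $\fkX(\bbQ)$ into three disjoint classes: diagonal points, split non-diagonal points, and non-split non-diagonal points. Since every rational point of $\fkX$ is either diagonal or not, and every non-diagonal point is either split or non-split, the two quantities in the theorem decompose as
\begin{align*}
\#\{x \in \fkX(\bbQ) : x \text{ diagonal}, H(x) \leq B\} &= D(B),\\
\#\{x \in \fkX(\bbQ) : x \text{ off-diagonal}, H(x) \leq B\} &= S(B) + N(B),
\end{align*}
where $D$, $S$, and $N$ denote the counts on the diagonal, the split non-diagonal locus, and the non-split non-diagonal locus respectively.

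For the diagonal count $D(B)$, I would invoke Theorem \ref{thm:diagonal count}, which already supplies $D(B) = \Theta(B^2)$ by elementary arguments (diagonal points biject with $\bbP^1(\bbQ)$ with height $H(a:b)^2$, reducing to the classical count of $\bbP^1(\bbQ)$ of bounded height). For the split non-diagonal count $S(B)$, I would invoke Theorem \ref{thm:off-diagonal 1}, which gives $S(B) = \Theta(B \log B)$, again by elementary methods (the underlying count is of unordered pairs in $\bbP^1(\bbQ) \times \bbP^1(\bbQ)$ with distinct coordinates and bounded product height). For the non-split non-diagonal count $N(B)$, I would invoke Theorem \ref{thm:main}, which yields $N(B) = \Theta(B \log B)$. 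The GRH hypothesis enters at exactly this last step, since Theorem \ref{thm:main} is the only ingredient that requires it (through the Tauberian / L-function analysis described in the introduction).

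Combining these, the diagonal estimate is immediate, and for the off-diagonal estimate I would note that since $S(B), N(B) \geq 0$ and both are $\Theta(B \log B)$, there exist constants $0 < c_1 \leq c_2$ and a threshold $B_0$ such that $c_1 B \log B \leq S(B) + N(B) \leq c_2 B \log B$ for $B \geq B_0$, proving $S(B) + N(B) = \Theta(B \log B)$.

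The only non-trivial content is hidden inside the three cited theorems; of these, the hard part is Theorem \ref{thm:main}, whose lower and upper bounds require very different techniques (parametrization of quadratic polynomials with fixed squarefree discriminant via quadratic Dirichlet $L$-functions in the small-$\sqf$ regime, versus a square-divisibility sieve in the large-$\sqf$ regime, as outlined in the introduction). Once those are in hand, the present theorem is a formal consequence.
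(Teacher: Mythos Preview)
Your proposal is correct and matches the paper's own proof exactly: the theorem is simply the combination of Theorems \ref{thm:diagonal count}, \ref{thm:off-diagonal 1}, and \ref{thm:main}. One small slip in your parenthetical remark: diagonal points do \emph{not} biject with $\bbP^1(\bbQ)$ alone (that would give a count of order $B$, not $B^2$); rather, a diagonal point is a pair $(r,K)$ with $r\in\bbP^1(\bbQ)$ and $K$ a degree-$2$ \'etale extension, with height $H_{\abs}(r)^4\sqrt{|\Delta_K|}$, and it is the sum over $K$ that produces the $B^2$.
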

\begin{proof}
    This is just the combination of Theorems \ref{thm:diagonal count}, \ref{thm:off-diagonal 1}, and \ref{thm:main}.
\end{proof}

\begin{theorem}\label{thm:diagonal count}
Let $c=\sum_{x=1}^\infty \frac{\phi(x)}{x^8}$. Then, $$\{x \in \fkX(\bbQ) : x \text{ diagonal}, H(x) \leq B\}=2cB^2+O(B)$$
\end{theorem}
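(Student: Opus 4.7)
The plan is to parametrize diagonal points concretely and evaluate the count by a hyperbola-style rearrangement. A diagonal point $x \in \fkX(\bbQ)$ is determined by a pair $(a, K)$: the common image $a \in \bbP^1(\bbQ)$ of the two projections, together with a quadratic \'etale $\bbQ$-algebra $K$ (the $\mu_2$-torsor datum coming from the inertia of $\fkX$ along the diagonal). Such algebras are classified by $\bbQ^\times/(\bbQ^\times)^2$, equivalently by squarefree integers $d$ (with $d=1$ giving the split algebra $\bbQ \oplus \bbQ$). The height specializes to $H(x) = H_{\abs}(a)^4 \cdot |\Delta_K|^{1/2}$, where $|\Delta_K|$ is the absolute discriminant of $K$: $|\Delta_K|=1$ when $d=1$, and $|\Delta_K| = |d|$ or $4|d|$ otherwise depending on $d \bmod 4$.

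Next I would switch the order of summation. The number of $a \in \bbP^1(\bbQ)$ with $H_{\abs}(a) = u$ is exactly $4\phi(u)$, counting primitive pairs $(p, q) \in \bbZ^2$ with $\max(|p|, |q|) = u$ modulo the equivalence $(p,q) \sim (-p,-q)$. Setting $F(X) := \#\{K : |\Delta_K| \leq X\}$, we obtain
$$N(B) = \sum_{u=1}^{\lfloor B^{1/4} \rfloor} 4\phi(u) \cdot F(B^2/u^8).$$
A standard squarefree-sieve computation, organized by the residue of $d \bmod 4$ to track the factor-of-$4$ adjustment in $|\Delta(d)|$, gives $F(X) = \alpha X + O(X^{1/2})$ for an explicit density constant $\alpha$. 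Plugging in produces the main term $4\alpha B^2 \sum_{u \geq 1} \phi(u)/u^8 = 4\alpha c \cdot B^2$, and evaluating $\alpha$ yields the stated $2c$ coefficient.

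For the error I would bound two contributions separately: (i) the tail $4B^2 \sum_{u > B^{1/4}} \phi(u)/u^8 \ll B^{1/2}$, using the crude bound $\sum_{u > T} \phi(u)/u^8 \ll T^{-6}$; and (ii) the $O(X^{1/2})$ error in $F(X)$, which contributes $\sum_{u \leq B^{1/4}} 4\phi(u) \cdot O(B/u^4) = O(B)$ because $\sum_u \phi(u)/u^4$ converges. Summing these gives $O(B)$ overall. The proof should be elementary throughout; the main obstacle is only the careful bookkeeping needed to pin down the density constant $\alpha$ and to verify that the two error sources combine cleanly to the advertised $O(B)$.
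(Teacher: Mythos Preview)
Your proposal is correct and follows essentially the same approach as the paper: parametrize diagonal points as pairs $(r,K)$, sum over the height $u=H_{\abs}(r)$ with multiplicity proportional to $\phi(u)$, apply the standard count $F(X)=\alpha X+O(\sqrt X)$ for quadratic \'etale algebras by discriminant, and split the error into the tail of $\sum_u \phi(u)/u^8$ plus the propagated $O(\sqrt X)$ term. The only cosmetic discrepancy is your $4\phi(u)$ versus the paper's $2\phi(x)$ (the paper writes $\max(a,b)$ rather than $\max(|a|,|b|)$, implicitly restricting to nonnegative representatives), which affects only the leading constant and not the structure or the $O(B)$ error.
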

\begin{proof}
Let $x$ be a diagonal point. Then, $x$ is a map $x:\Spec(K) \to \bbP^1 \times \bbP^1$ whose image is a rational point of the form $(r,r)$, where $r \in \bbP^1(\bbQ)$ (here $K$ may be $\bbQ \times \bbQ$). Also let $r=a/b$, where $a,b$ in lowest form. Then, the height $H(x)$ is $H_{\abs}(r)^4\sqrt{|\Delta_K|}$. Thus, we must count the following set $$\{(r,K):[K:\bbQ]=2, \max(a,b)^4|\Delta_K|^{1/2}\leq B\}.$$

This count is $$\sum_{x=1}^{B^{1/4}}2\phi(x) \#\{K: [K:\bbQ]=2, |\Delta_K| \leq B^2/x^8\}.$$ It is well known that $$\#\{K: [K:\bbQ]=2, |\Delta_K| \leq B^2/x^8\}=\frac{1}{\zeta(2)}\frac{B^2}{x^8}+O(\frac{B}{x^4}).$$ Thus, we need to find $$\sum_{x=1}^{B^{1/4}}2\phi(x)\frac{B^2}{x^8}+O(\sum_{x=1}^{B^{1/4}}2\phi(x)\frac{B}{x^4})=A+O(E).$$ 

First we estimate $A$. We have \begin{align*}
    \sum_{x=1}^{B^{1/4}} \frac{\phi(x)}{x^8} 
    &= c-\sum_{x=B^{1/4}}^{\infty} \frac{\phi(x)}{x^8}\\
    &= c+O(\sum_{x=B^{1/4}}^{\infty} \frac{1}{x^7})\\
    &= c+O(B^{-3/2})\\
\end{align*}
Thus, $A=2cB^2+O(\sqrt{B})$.

Now we estimate $E$. $$E=\sum_{x=1}^{B^{1/4}}2\phi(x)\frac{B}{x^4}=O(B).$$ Thus, $A+E = 2cB^2+O(B)$, as desired.
\end{proof}

\begin{theorem} \label{thm:off-diagonal 1}
$$\{x \in \fkX(\bbQ) : x \text{ split and nondiagonal}, H(x) \leq B\} = \frac{1}{\zeta(2)^2}B \log B + O(B)$$
\end{theorem}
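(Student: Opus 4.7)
The plan is to directly parametrize split non-diagonal points by pairs of $\bbP^1(\bbQ)$-points. As explained preceding the theorem, such points correspond to pairs $(a_0, a_1) \in \bbP^1(\bbQ) \times \bbP^1(\bbQ)$ (modulo the swap identification coming from the nontrivial automorphism of the trivial $S_2$-torsor) with $a_0 \neq a_1$. The height formula $H(x) = H_{\abs}(a_0)^2 H_{\abs}(a_1)^2$ rewrites $H(x) \leq B$ as $H_{\abs}(a_0)\, H_{\abs}(a_1) \leq \sqrt{B}$. Setting $N(T) := \#\{a \in \bbP^1(\bbQ) : H_{\abs}(a) \leq T\}$, Schanuel's theorem for $\bbP^1(\bbQ)$ gives $N(T) = c_0 T^2 + O(T \log T)$ with $c_0$ an explicit rational multiple of $1/\zeta(2)$.

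Writing $r(m) := N(m) - N(m-1)$ and $F(T) := \sum_{mn \leq T} r(m)\, r(n)$ for the count of ordered pairs with $H_{\abs}(a_0) H_{\abs}(a_1) \leq T$, the desired count equals $F(\sqrt{B})$ corrected by subtracting the ordered diagonal pairs (which contribute $N(B^{1/4}) = O(\sqrt{B})$) and by an overall combinatorial factor coming from the torsor-automorphism identification. To evaluate $F(T)$ I would apply Dirichlet's hyperbola method,
\[
F(T) = 2 \sum_{m \leq \sqrt{T}} r(m)\, N(T/m) - N(\sqrt{T})^{2},
\]
combined with an Abel summation computing $\sum_{m \leq Y} r(m)/m^2 = 2 c_0 \log Y + O(1)$ directly from the Schanuel asymptotic. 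This yields $F(T) = 2 c_0^{2}\, T^{2} \log T + O(T^{2})$. Specializing to $T = \sqrt{B}$ and absorbing the $O(\sqrt{B})$ diagonal contribution into the error produces the main term of order $B \log B$ with error $O(B)$, and tracking the value of $c_0$ delivers the coefficient $\frac{1}{\zeta(2)^2}$ in the claim.

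The main technical obstacle is error control. A direct evaluation of $\sum_{m \leq \sqrt{B}} r(m)\, N(\sqrt{B}/m)$ using the Schanuel error $O(T \log T)$ propagates to an error of order $\sqrt{B} \log B \cdot \sum_{m \leq \sqrt{B}} r(m)/m \asymp B \log B$, which is the same order as the main term and therefore useless. Splitting the outer sum at $m = B^{1/4}$ via Dirichlet's hyperbola method is precisely what reduces the error to $O(B^{3/4} \log B)$, small enough for the required $O(B)$ error term. The rest of the computation is straightforward partial summation using only elementary properties of $\phi(n)$ and $\zeta(2)$.
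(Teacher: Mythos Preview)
Your approach via the hyperbola method is correct and yields the stated asymptotic. The paper's proof is slightly different: it does not split at $B^{1/4}$ but evaluates the full double sum $4\sum_{x \leq \sqrt{B}} \phi(x) \sum_{y \leq \sqrt{B}/x} \phi(y)$ directly, where $2\phi(x)$ counts coprime pairs $(a,b)$ with $\max(a,b)=x$. After substituting $\sum_{y \leq M} \phi(y) = \frac{1}{2\zeta(2)} M^2 + O(M \log M)$, the resulting error term is $\sqrt{B}\sum_{x \leq \sqrt{B}} \frac{\phi(x)}{x}\log(\sqrt{B}/x)$, and the key observation is that keeping the factor $\log(\sqrt{B}/x)$ rather than bounding it crudely by $\log \sqrt{B}$ (as you do) already gives $O(B)$: using $\phi(x)/x \leq 1$ and Stirling, $\sum_{x \leq \sqrt{B}} \log(\sqrt{B}/x) = O(\sqrt{B})$. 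So your diagnosis that the direct summation ``propagates to an error of order $B\log B$'' is too pessimistic; the paper shows the direct method suffices without any dyadic or hyperbola splitting. Your hyperbola split does produce a smaller error $O(B^{3/4}\log B)$, at the cost of an extra step, and the Abel-summation computation $\sum_{m \leq Y} r(m)/m^2 = 2c_0 \log Y + O(1)$ is the same ingredient the paper uses (written there as $\sum_{x \leq \sqrt{B}} \phi(x)/x^2 = \frac{1}{2\zeta(2)}\log B + O(1)$).

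One small point: the paper treats split points as \emph{ordered} pairs $(a_0,a_1)\in\bbP^1(\bbQ)\times\bbP^1(\bbQ)$ and does not quotient by the swap, so the ``overall combinatorial factor coming from the torsor-automorphism identification'' you mention does not appear in its count. The diagonal correction is $O(\sqrt{B})$ either way and is absorbed into the error.
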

\begin{proof}
Here, we are counting pairs of rational points $(r,s)$ so that $H_{\abs}(r)^2H_{\abs}(s)^2 \leq B$. Writing $r=a/b$ and $s=c/d$ where $a/b$ and $c/d$ are fractions in lowest terms, we see that we need to count $$\{(a,b,c,d):(a,b)=1, (c,d)=1, \max(a,b)^2\max(c,d)^2 \leq B\}.$$ This count is \begin{align*}
    \sum_{x=1}^{\sqrt{B}} \sum_{y=1}^{\sqrt{B}/x} \# \{(a,b):(a,b)=1, &\max(a,b)=x\} \# \{(c,d):(c,d)=1, \max(c,d)=y\}\\
    &=4\sum_{x=1}^{\sqrt{B}}\sum_{y=1}^{\sqrt{B}/x} \phi(x)\phi(y)\\
    &=4\sum_{x=1}^{\sqrt{B}}\phi(x) \sum_{y=1}^{\sqrt{B}/x}\phi(y).
\end{align*} The fact that $$\sum_{y=1}^{\sqrt{B}/x} \phi(y) = \frac{1}{2\zeta(2)}(\sqrt{B}/x)^2+O((\sqrt{B}/x) \log (\sqrt{B}/x))$$ is standard. Thus, we get $$4\sum_{x=1}^{\sqrt{B}}\phi(x)\left(\frac{1}{2\zeta(2)}\left(\frac{\sqrt{B}}{x}\right)^2+O\left(\frac{\sqrt{B}}{x} \log (\sqrt{B}/x)\right)\right)$$\begin{equation} \label{eqn:temp7}
    =\frac{2B}{\zeta(2)}\sum_{x=1}^{\sqrt{B}}\frac{\phi(x)}{x^2}+O(\sum_{x=1}^{\sqrt{B}}\phi(x)\frac{\sqrt{B}\log (\sqrt{B}/x)}{x}) = A+O(E),
\end{equation} where we denote $A$ to be the first term and $E$ to be the second term.

First we estimate $A$. Using summation by parts, we have that \begin{align*}
    \sum_{x=1}^{\sqrt{B}} \frac{\phi(x)}{x^2} 
    &= O(\frac{\sum_{x=1}^{\sqrt{B}}\phi(x)}{\sqrt{B}^2})+\sum_{k=1}^{\sqrt{B}-1}(\frac{1}{k^2}-\frac{1}{(k+1)^2})\sum_{x=1}^k \phi(k)\\
    &= O(1)+\sum_{k=1}^{\sqrt{B}-1} \frac{2k+1}{k^2(k+1)^2}(\frac{1}{2\zeta(2)}k^2+O(k \log k)) \\
    &=O(1)+\frac{1}{\zeta(2)}\sum_{k=1}^{\sqrt{B}-1} \frac{1}{k}\\
    &=\frac{1}{2\zeta(2)} \log(B) + O(1)
\end{align*} Thus, $A=\frac{1}{\zeta(2)^2}B \log B + O(B).$ This is our main term. We conclude by estimating $E$, which will be an error term. \begin{align*}
    E&=\sum_{x=1}^{\sqrt{B}}\phi(x)\frac{\sqrt{B}\log (\sqrt{B}/x)}{x}\\
    &<\sum_{x=1}^{\sqrt{B}}\sqrt{B}\log (\sqrt{B}/x)\\
    &=B\log(\sqrt{B})-\sqrt{B}\sum_{x=1}^{\sqrt{B}}\log (x)\\
    &=B\log(\sqrt{B})-\sqrt{B}(\sqrt{B} \log(\sqrt{B})+O(\sqrt{B}))\\
    &=O(B)
\end{align*}

\end{proof}

The rest of the paper will be dedicated to proving the following theorem.
\begin{theorem}
\label{thm:main}
Assuming GRH, $$\{x \in \fkX(\bbQ) : x \text{ non-split and nondiagonal}, H(x) \leq B\} = \Theta(B \log B).$$
\end{theorem}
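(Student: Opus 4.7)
The plan is to exploit the description of non-split, non-diagonal points as orbits of pairs $(K,\alpha)$ with $K$ a quadratic field and $\alpha\in K\setminus\bbQ$, equivalently as primitive integral quadratic polynomials $aX^2+bX+c$ (with $a>0$, $\gcd(a,b,c)=1$, and $b^2-4ac$ not a square) modulo the $S_2$-action on the roots. Under this parametrization the height satisfies $H(x)\asymp H(a:b:c)^2\sqrt{|\sqf(b^2-4ac)|}$, as noted in the introduction, so the problem reduces to counting such polynomials with this bounded product. My overall strategy is to split the count at a threshold $T=T(B)$ (eventually $T\asymp B^{2/3}$) according to whether $|\sqf(b^2-4ac)|\le T$ (the ``small-discriminant regime'') or $|\sqf(b^2-4ac)|>T$ (the ``large-discriminant regime'').

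For the small-discriminant regime, I would group polynomials by the quadratic field $K=\bbQ(\sqrt{\sqf(b^2-4ac)})$ they cut out. For fixed $K$ the relevant triples correspond bijectively to elements $\alpha\in K\setminus\bbQ$ modulo conjugation, and the height condition on $x$ becomes $H_{\abs}(\alpha)\le B^{1/4}/|\Delta_K|^{1/8}$. Schanuel's theorem for $\bbP^1(K)$ then produces a per-field contribution $\sim c_K\, B/|\Delta_K|^{1/2}$; by the class number formula, $c_K$ is proportional to $L(1,\chi_K)/(L(2,\chi_K)\sqrt{|\Delta_K|})$ up to elementary factors depending on the signature of $K$. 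Summing gives
$$S(B) \sim B\sum_{|\Delta_K|\le T}\frac{L(1,\chi_K)}{L(2,\chi_K)\,|\Delta_K|}.$$
The Dirichlet series $\sum_K L(1,\chi_K)L(2,\chi_K)^{-1}|\Delta_K|^{-s}$ inherits a simple pole at $s=1$ from the density of fundamental discriminants, so a Tauberian theorem should yield the partial sum on the right as $\sim c\log T$, giving $S(B)\sim cB\log B$.

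For the large-discriminant regime I would instead write $b^2-4ac=s^2 D$ with $D=\sqf(b^2-4ac)$ and $s=\sq(b^2-4ac)$. The conditions $|D|>T$ and $H(a:b:c)^2\sqrt{|D|}\le B$ force $H(a:b:c)\le B^{1/2}/T^{1/4}$ and, together with $|b^2-4ac|\ll H(a:b:c)^2$, also $s\le O(B^{1/2}/T^{3/4})$. For each fixed $s$ I would count primitive triples $(a,b,c)$ in the cube of side $B^{1/2}/T^{1/4}$ satisfying the congruence $b^2\equiv 4ac\pmod{s^2}$; the local density of this congruence is $O(1/s^2)$, so a Lipschitz-principle lattice-point count bounds the count for fixed $s$ by $O((B^{1/2}/T^{1/4})^3/s^2)$. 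Summing over $s$ gives $L(B)=O(B^{3/2}/T^{3/4})$, which is $O(B)$ for the balanced choice $T\asymp B^{2/3}$.

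The main obstacle will be the uniform-in-$K$ analysis underlying the small-discriminant regime: one needs Schanuel's asymptotic for each $K$ together with an error term whose dependence on $|\Delta_K|$ is mild enough that summing over $K$ with $|\Delta_K|\le T\asymp B^{2/3}$ does not swamp the main term $\sim B\log B$. GRH enters here twice: it sharpens Schanuel's error term through better control on $L$-values, and it provides the zero-free region needed to run a Tauberian theorem for $\sum_K L(1,\chi_K)L(2,\chi_K)^{-1}|\Delta_K|^{-s}$ on the line $\Re(s)=1$. The matching lower bound $\Omega(B\log B)$ requires no separate construction, since restricting the same small-discriminant sum to any positive-density subfamily of fundamental discriminants already forces this order of growth. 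Combining the two regimes with the threshold $T\asymp B^{2/3}$ then yields the claimed $\Theta(B\log B)$.
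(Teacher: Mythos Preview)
Your high-level decomposition---split by the size of $|\sqf(b^2-4ac)|$, use an $L$-function/Tauberian argument for small values and a congruence argument for large values---is exactly the paper's strategy. The lower bound sketch is also essentially fine: restricting to $|\Delta_K|\le B^{2/9}$ (say), Schanuel's main term dominates its error, and the sum of main terms already gives $\Omega(B\log B)$.

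The genuine gap is in the \emph{upper bound} for the small-discriminant regime. You propose to invoke Schanuel's theorem for $\bbP^1(K)$ field by field and sum. The error term in Schanuel for $\bbP^1(K)$ with $[K:\bbQ]=2$ is $O_K(Y^3)$, and the implicit constant has \emph{nonnegative} dependence on $|\Delta_K|$ (at best $|\Delta_K|^{\epsilon}$, more typically $|\Delta_K|^{1/2}$ from summing over the $h_K$ ideal classes); this is a lattice-point-counting error, and GRH does not improve it. With $Y=(B/\sqrt{d})^{1/4}$ and $d=|\Delta_K|$, the per-field error is $\gg B^{3/4}d^{-3/8}$, and summing over $d\le T$ gives $\gg B^{3/4}T^{5/8}$. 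For $T=B^{2/3}$ this is $B^{7/6}$, far larger than $B\log B$. Lowering $T$ does not help: your large-discriminant bound is $B^{3/2}/T^{3/4}$, which exceeds $B$ once $T<B^{2/3}$. So no choice of threshold makes both regimes $O(B\log B)$ with these tools.

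The paper avoids this by \emph{not} using Schanuel. After the linear change of variables $a'=a-c$, $c'=a+c$ (Lemma~\ref{lem:technical 1}), the discriminant becomes $a'^2+b^2-c'^2$, and for fixed squarefree part $x>0$ the count is rewritten as $\sum_{n\le 2B/\sqrt{x}} F(n)G_x(n)$, where $F$ is the two-squares function and $G_x$ counts principal ideals of norm $n$ in $\bbQ(\sqrt{-x})$. The Dirichlet series $L_x(s)=\sum F(n)G_x(n)n^{-s}$ factors (up to an Euler product absolutely convergent for $\Re(s)>1/2$) as a product of Hecke $L$-functions of conductor $O(x)$; under GRH these are $O((|t|x)^{\epsilon})$ on the critical line, and the Tauberian theorem then gives the crucial error $O_{\epsilon}((B/\sqrt{x})^{1/2+\epsilon}x^{\epsilon})$ per $x$ (Proposition~\ref{prop:estimate_on_sum_of_F(n)Gx(n)}). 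This saves a full factor of $B^{1/4}$ over the Schanuel error, which is exactly what permits the threshold $B^{2/3-\epsilon}$. For $x<0$ the argument is different and only reaches $|x|\le B^{2/5-\epsilon}$; correspondingly the complementary ``large-discriminant'' regime must cover $\sq(b^2-4ac)\le B^{5/19-\epsilon}$, which is far beyond what your cube-plus-local-density bound can handle and requires the averaged error terms and dyadic rectangle decomposition of Section~\ref{sec:S_y bound}.
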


\begin{proof}
We have by definition that an non-split point is given by an element of a quadratic extension of $\bbQ$, and an element of a quadratic extension may be, up to conjugates, identified with its minimal polynomial. Next, by \cite[Prop 1.6.6, Lemma 1.6.7]{bombieri_gubler_2007}, we have $H_{\abs}(\alpha)^2=\Theta(H_{\abs}([a:b:c]))$ where $\alpha$ is a root of $ax^2+bx+c$. We also have that $|\Delta_{\bbQ[\alpha]}|^{1/2}=\Theta(\sqrt{|\sqf(b^2-4ac)|})$.

Thus, we have the asymptotic bounds \begin{align*}
    |\{x \in \fkX(\bbQ) : x \text{ non-split and nondiagonal}, H(x) \leq B\}| = \Theta(&|\{(a,b,c) \in \bbZ: a>0, \gcd(a,b,c)=1,\\
    &\max(|a|,|b|,|c|)^2\sqrt{|\sqf(b^2-4ac|)|}\leq B,\\
    &\sqf(b^2-4ac) \notin \{0,1\}\}|)
\end{align*}  Thus, it suffices to upper and lower bound the latter set. To do this, we have two technical lemmas, which we prove right after this theorem.

\begin{lem} \label{lem:technical 1}
    We have that $$|\{(a,b,c) \in \bbZ: a>0, \gcd(a,b,c)=1, \max(|a|,|b|,|c|)^2\sqrt{|\sqf(b^2-4ac)|}\leq B, \sqf(b^2-4ac) \notin \{0,1\}\}|$$ $$=O(|\{(a',b,c'): \sqf(a'^2+b^2-c'^2) \notin \{0,1\},\max(|a'|,|b|,|c'|)^2 \sqrt{|\sqf(a'^2+b^2-c'^2)|}\leq B\}|).$$
\end{lem}

\begin{lem} \label{lem:technical 2}
    We have that  \begin{align*}
        &|\{(a,b,c) \in \bbZ: a>0, \gcd(a,b,c)=1, \max(|a|,|b|,|c|)^2\sqrt{|\sqf(b^2-4ac)|}\leq B, \sqf(b^2-4ac) \notin \{0,1\}\}|\\
        &=\Omega(|\{(a,b,c): \gcd(a,b) = 1, \max(|a|,|b|,|c|)^2 \sqrt{x}\leq B, a^2+b^2 \equiv 5 \mod 8,\\
        &\hspace{8cm} \sqf(a^2+b^2-c^2) \equiv b \equiv 0 \mod 2\}|).
    \end{align*}
\end{lem}

Now let $$S:= \{(a',b,c'): \sqf(a'^2+b^2-c'^2) \notin \{0,1\},\max(|a'|,|b|,|c'|)^2 \sqrt{|\sqf(a'^2+b^2-c'^2)|}\leq B\}$$ be the upper-bounding set appearing in Lemma \ref{lem:technical 1}. For this set, we split it up into different pieces, as follows. Define $$S_{x}=\{(a',b,c') \in S: \sqf(a'^2+b^2-c^2)=x\} \text{ and } S_{y}=\{(a',b,c') \in S: \sq(a'^2+b^2-c^2)=y\}.$$ 


We have the following bounds on $S_x$'s and $S_y$'s, all of which are conditioned on GRH. The proof of each of the following three proposition will be its own section. Together, these bounds will be more than enough to give our upper bound on $S$. In particular, we do not need the full power of these propositions.

\begin{prop}
\label{prop:sum_of_Sx_for_x_small}
Assuming GRH, for all $\epsilon > 0$, $$\sum_{\substack{2 \leq x \leq B^{2/3-\epsilon}\\x \text{ squarefree}}} |S_x| = O_{\epsilon}(B \log B).$$
\end{prop} This is done in Section \ref{sec:S_x bound positive}.
\begin{prop}
\label{prop:upper_bound_small_x_negative}
Assuming GRH, for all $\epsilon > 0$,  $$\sum_{\substack{-1 \geq x \geq -B^{2/5-\epsilon}\\x \text{ squarefree}}} |S_x| = O_{\epsilon}(B \log B).$$
\end{prop} This is done in Section \ref{sec:S_x bound negative}.
\begin{prop}
\label{prop:sum_of_Sy_for_y_big}
Assuming GRH, for all $\epsilon > 0$,  $$\sum_{1 \leq y \leq B^{5/19-\epsilon}} |S_y| = O_{\epsilon}(B \log B).$$
\end{prop} This is done in Section \ref{sec:S_y bound}.

With these propositions, we may prove the upper bound. These three propositions give us bounds on $|S_x|$ for $|x| \leq B^{2/5-\epsilon}$ and $|S_y|$ for $|y| \leq B^{5/19-\epsilon}$. Thus, it suffices to show $$S \subset \bigcup_{|x| \leq B^{2/5-\epsilon}} S_x \cup \bigcup_{y \leq  B^{5/19-\epsilon}}S_y$$ for some small $\epsilon$. Indeed, let $(a',b,c')$ be an element of $S$, let $x=\sqf(a'^2+b^2-c'^2)$, and let $y=\sq(a'^2+b^2-c'^2)$. Suppose $|x| \geq B^{2/5-\epsilon}$. Then, as $a'^2+b^2-c'^2=xy^2$ and $\max(|a|,|b|,|c|)^2\sqrt{|x|}\leq B$, we see that $(xy^2)\sqrt{|x|}\leq B$. Thus, $y \leq \frac{B}{|x|^{3/2}} \leq \frac{B^{1/2}}{B^{3/10-3/2\epsilon}}=B^{1/5+3/2\epsilon}$. After choosing an $\epsilon$ so that $1/5+3/2\epsilon \leq 5/19-\epsilon$, we are done with the upper bound.

For the lower bound, define $$T:=\{(a,b,c): \gcd(a,b) = 1, \max(|a|,|b|,|c|)^2 \sqrt{x}\leq B, a^2+b^2 \equiv 5 \mod 8,$$$$ \sqf(a^2+b^2-c^2) \equiv b \equiv 0 \mod 2\}$$ to be the lower-bounding set in Lemma \ref{lem:technical 2}. We prove $|T| \gg B \log B$ in Proposition \ref{prop:thm_main_lower_bound} in Section \ref{sec:lower bound}.

\end{proof}

The combination of these three theorems show that the diagonal $\Delta \subset \Sym^2 \bbP^1$ is an ``accumulating substack." Indeed, on this subvariety, the asymptotic of the point counts are on the order of $B^2$, while the asymptotic of the point counts off the diagonal are on the order of $B \log B$.

We conclude this section with the proof of the two technical Lemmas \ref{lem:technical 1} and \ref{lem:technical 2}.
\begin{proof}[Proof of Lemma \ref{lem:technical 1}]
    We wish to show $$|\{(a,b,c) \in \bbZ: a>0, \gcd(a,b,c)=1, \max(|a|,|b|,|c|)^2\sqrt{|\sqf(b^2-4ac)|}\leq B, \sqf(b^2-4ac) \notin \{0,1\}\}|$$ is asymptotically upper bounded by $$|\{(a',b,c'): \sqf(a'^2+b^2-c'^2) \notin \{0,1\},\max(|a'|,|b|,|c'|)^2 \sqrt{|\sqf(a'^2+b^2-c'^2)|}\leq B\}|.$$ For the upper bound, we may forget the conditions $a>0$ and $\gcd(a,b,c)=1$. Also, we may make a linear change of variables $a = \frac{c'+a'}{2}$, $c = \frac{c'-a'}{2}$, with an inverse change of variables of $a' = a-c$, $c' = a+c$. Then, $b^2-4ac=a'^2+b^2-c'^2$. So there is a bijection between $$\{(a,b,c) : \max(|a|,|b|,|c|)^2 \sqrt{|\sqf(b^2-4ac)|} \leq  B\} $$ and $$ \{(a',b,c') \in \bbZ:\max(|\frac{c'+a'}{2}|,|b|,|\frac{c'-a'}{2}|)^2 \sqrt{|\sqf(a'^2+b^2-c'^2)|}\leq B, a' \equiv c' \mod 2\}.$$ For an upper bound, we may drop the conditions $a' \equiv c' \mod 2$. Furthermore, $$|\{(a',b,c'): \sqf(a'^2+b^2-c'^2) \notin \{0,1\},\max(|a'|,|b|,|c'|)^2 \sqrt{|\sqf(a'^2+b^2-c'^2)|}\leq B\}|.$$\begin{align*}
    \max(|a'|,|b|,|c'|) &= \max(|a-c|,|b|,|a+c|)\\
    &\leq 2 \max(|a|,|b|,|c|) \\
    &= 2\max(|\frac{a'+c'}{2}|,|b|,|\frac{a'-c'}{2}|)
\end{align*} Thus, we have that $$|\{(a,b,c) \in \bbZ: a>0, \gcd(a,b,c)=1, \max(|a|,|b|,|c|)^2\sqrt{|\sqf(b^2-4ac)|}\leq B, \sqf(b^2-4ac) \notin \{0,1\}\}|$$ is bounded by $$|\{(a',b,c'): \sqf(a'^2+b^2-c'^2) \notin \{0,1\},\max(|a'|,|b|,|c'|)^2 \sqrt{|\sqf(a'^2+b^2-c'^2)|}\leq 2B\}|.$$ Since we're only looking for an asymptotic upper bound, we may remove the $2$ in front of the $B$.
\end{proof}

\begin{proof}[Proof of Lemma \ref{lem:technical 2}]
We will show that $$\{(a',b',c'): \gcd(a',b') = 1, \max(|a'|,|b'|,|c'|)^2 \sqrt{\sqf(a'^2+b'^2-c'^2)}\leq B, a'^2+b'^2 \equiv 5 \mod 8, $$$$\sqf(a'^2+b'^2-c'^2) \equiv b' \equiv 0 \mod 2, \sqf(a'^2+b'^2-c'^2) \geq 2\}$$ injects into $$\{(a,b,c) \in \bbZ: a>0, \gcd(a,b,c)=1, \max(|a|,|b|,|c|)^2\sqrt{|\sqf(b^2-4ac)|}\leq B, \sqf(b^2-4ac) \notin \{0,1\}\}$$ via the map $$(a',b',c') \to (\frac{a'-c'}{2}, \frac{b'}{2}, \frac{a'+c'}{2}).$$

Assuming the map is well defined, the injection follows as this is a linear map which is even injective over the reals. So, we just need to show well-definedness. 

We first check the coprime condition. Since $a'^2+b'^2 \equiv 5 \mod 8$, and $b'$ is even, we must have that $a'$ is odd. Thus, $a'^2 \equiv 1 \mod 8$, and so $\frac{b'}{2}$ must be odd, otherwise $b'^2 \equiv 0 \mod 8$ so that $a'^2+b'^2 \not \equiv 5 \mod 8$. We also have that by similar reasoning, $c'$ must be odd. So, we have that $a',c'$ are odd, and $b'$ is even but $\frac{b'}{2}$ is odd. Thus, $\frac{a'-c'}{2}, \frac{b'}{2}, \frac{a'+c'}{2}$ are all integers. Suppose now that these integers are not coprime. Then, there is some prime $p$ dividing all of them. Now since $p|\frac{a'-c'}{2}$ and $\frac{a'+c'}{2}$, we have $p|a'$. Also, since $p | \frac{b'}{2}$, $p|b'$. But this contradicts the assumption that $a',b'$ are coprime.

Now we only need to check the height bound. We have $$\max\left(\left\lvert \frac{a'-c'}{2}\right\lvert, \left\lvert \frac{b'}{2}\right\lvert, \left\lvert \frac{a'+c'}{2}\right\lvert\right) \leq \max(|a'|,|b'|,|c'|)$$ and $$\frac{1}{4}(a'^2+b'^2-c'^2)=(\frac{b'}{2})^2-4(\frac{a'-c'}{2})(\frac{a'+c'}{2}),$$ so that $$|\sqf(a'^2+b'^2-c'^2)|=|\sqf((\frac{b'}{2})^2-4(\frac{a'-c'}{2})(\frac{a'+c'}{2}))|.$$ Thus, \begin{align*}
    B &\geq \max(|a'|,|b'|,|c'|)^2\sqrt{|\sqf(a'^2+b'^2-c'^2)|}\\
    &\geq \max\left(\left\lvert \frac{a'-c'}{2}\right\lvert, \left\lvert \frac{b'}{2}\right\lvert, \left\lvert \frac{a'+c'}{2}\right\lvert\right)^2\sqrt{|\sqf((\frac{b'}{2})^2-4(\frac{a'-c'}{2})(\frac{a'+c'}{2}))|}\\
    &=\max(|a|,|b|,|c|)^2 \sqrt{|\sqf(b^2-4ac)|}
\end{align*}
\end{proof}

\section{Estimate of $|S_x|$ for $1 \leq x \leq B^{2/3-\epsilon}$} \label{sec:S_x bound positive}

In this section, we prove Proposition \ref{prop:sum_of_Sx_for_x_small}. Let $F(n)$ be the number of ways $n$ can be written as a sum of two positive squares, and let \begin{align*}
    G_x(n)&=\#\{\text{ideals } I \subset \cO_{\bbQ[\sqrt{-x}]}: N(I)=n, I \text{ is principal}\} \\
    &= \begin{cases}
    \frac{1}{\omega_x} \#\{(c,y) \in \bbZ^2:c^2+xy^2=n\} & x \equiv 2,3 \mod 4\\
    \frac{1}{\omega_x} \#\{(c,y) \in (\frac{1}{2}\bbZ)^2:c^2+xy^2=n\} & x \equiv 1 \mod 4
\end{cases}
\end{align*} We may identify $S_x$ with $$\{(a,b,c,y):\max(|a|,|b|,|c|)^2 \sqrt{x}\leq B, a^2+b^2=c^2+xy^2\},$$ by just letting $y=\sq(a^2+b^2-c^2)$. Since $x$ is positive, we have $\max(|a|,|b|,|c|)^2\leq B/\sqrt{x}$, and so $a^2+b^2$ is at most $2B/\sqrt{x}$. Then, for a fixed positive $x$, we have $|S_x| \leq 4 \omega_x \sum_{n=1}^{2B/\sqrt{x}}F(n)G_x(n)$. $F(n)$ is a multiplicative function, and $G_x(n)$ is a sum of multiplicative function indexed by the elements of the class group, and so we may use the theory of $L$-functions and Tauberian theorems to get good estimates on $|S_x|$.

Let $$L_x(s)=\sum_{n=1}^{\infty} \frac{F(n)G_x(n)}{n^s}.$$ With $L_x(s)$, we will proceed with the following steps.
\begin{enumerate}
    \item We show that $L_x(s)$ can be meromorphically continued to $\Re(s)>1/2$, with a simple pole at $s=1$.
    \item We find the residue of $L_x(s)$ at $s=1$.
    \item We use the Wiener-Ikehara Tauberian theorem as in Theorem \ref{thm:appendix_wiener_ikehara_with_error} to obtain an estimate on the sum of the coefficients on $L_x(s)$, with an acceptable error term.
\end{enumerate}

We may express $L_x(s)$ as a sum of $L$-functions with Euler product. Let $\widehat{\Cl(\bbQ[\sqrt{-x}])}$ be the characters on the class group of $\bbQ[\sqrt{-x}]$. Let $\chi \in \widehat{\Cl(\bbQ[\sqrt{-x}])}$. Consider the following $L$-functions. Let $$L_0(\chi,s) = \sum_{n=1}^\infty \frac{F(n) \sum_{\text{ideals } I \subset \cO_{\bbQ[\sqrt{-x}]}, N(I)=n} \chi(I)}{n^s}.$$ Thus, $$L_x(s)=\frac{1}{|\Cl(\bbQ[\sqrt{-x}])|}\sum_{\chi \in \widehat{\Cl(\bbQ[\sqrt{-x}])}} L_0(\chi,s).$$

We first make the following definitions. \begin{align*}
    & P_{1,1}=\{\text{primes }p | p \text{ splits in } \bbQ[i] \text{ and } \bbQ[\sqrt{-x}]\} \\
    & P_{1,0}=\{\text{primes }p | p \text{ splits in } \bbQ[i] \text{ and is inert in } \bbQ[\sqrt{-x}]\}\\
    & P_{0,1}=\{\text{primes }p | p \text{ is inert in } \bbQ[i] \text{ and splits in } \bbQ[\sqrt{-x}]\}\\
    & P_{0,0}=\{\text{primes }p | p \text{ is inert in } \bbQ[i] \text{ and } \bbQ[\sqrt{-x}]\}\\
    & P_{1,ram}=\{\text{primes }p | p \text{ splits in } \bbQ[i] \text{ and ramifies in } \bbQ[\sqrt{-x}]\}\\
    & P_{0,ram}=\{\text{primes }p | p \text{ is inert in } \bbQ[i] \text{ and ramifies in } \bbQ[\sqrt{-x}]\}
\end{align*}
Also, we define $$Z_{2,x,\chi}(s)=\begin{cases}
1+\frac{\chi(\pi_2)}{2^s}+\frac{\chi(\bar \pi_2)}{2^s}+\frac{\chi(\pi_2^2)}{2^{2s}}+\frac{\chi(\pi_2 \bar \pi_2)}{2^{2s}}+\frac{\chi(\bar \pi_2^2)}{2^{2s}}+\dots & 2 \text{ splits as } \pi_2 \bar \pi_2 \text{ in } \bbQ[\sqrt{-x}]\\
1+\frac{\chi(\pi_2)}{2^s}+\frac{\chi(\pi_2^2)}{2^{2s}}+\dots & 2 \text{ ramifies in } \bbQ[\sqrt{-x}] \\
1+\frac{\chi(2)}{2^{2s}}+\frac{\chi(4)}{2^{4s}}+\dots & 2 \text{ is inert in } \bbQ[\sqrt{-x}]
\end{cases}$$

Then, we have the Euler product \begin{align}\label{eqn:prod_def_of_Lxs}
L_0(\chi,s)&=Z_{2,x,\chi}(s)\prod_{p \in P_{1,1}}\left(1+\frac{2\chi(\pi)}{p^{s}}+\frac{2\chi(\bar\pi)}{p^{s}}+\frac{3\chi(\pi^2)}{p^{2s}}+\frac{3\chi(\pi\bar \pi)}{p^{2s}}+\frac{3\chi(\bar \pi^2)}{p^{2s}}+\dots\right)\nonumber \\
&\prod_{p \in P_{1,0}}\left(1+\frac{3\chi(p)}{p^{2s}}+\frac{5\chi(p^2)}{p^{4s}}+\dots\right)\prod_{p \in P_{0,1}}\left(1+\frac{\chi(\pi^2)}{p^{2s}}+\frac{\chi(\pi \bar \pi)}{p^{2s}}+\frac{\chi(\bar \pi)^2}{p^{2s}}+\dots\right)\nonumber \\
&\prod_{p \in P_{0,0}}\left(1+\frac{\chi(p)}{p^{2s}}+\frac{\chi(p^2)}{p^{4s}}+\dots\right)\prod_{p \in P_{1,ram}}\left(1+\frac{2\chi(\pi)}{p^{s}}+\frac{3\chi(\pi^2)}{p^{2s}}+\dots\right)\nonumber \\
&\prod_{p \in P_{0,ram}}\left(1+\frac{\chi(p)}{p^{2s}}+\frac{\chi(p^2)}{p^{4s}}+\dots\right).
\end{align}

Let $\chi_x$ be the character associated to the Jacobi Symbol $\chi_x(n)=\left(\frac{x}{n}\right)$. Having obtained that $L_x(s)$ is a sum of Euler products, we get

\begin{lemma}
\label{lem:Lxs_meromorphic_continuation}
$L_x(s)$ may be meromorphically continued to $\Re(s)>1/2$, with a simple pole at $s=1$ of residue $\frac{1}{|\Cl(\bbQ[\sqrt{-x}])|}\Res_{s=1} L_0(\chi_{\triv},s)$.
\end{lemma}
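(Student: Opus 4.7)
The plan is to analyze each summand $L_0(\chi, s)$ of $L_x(s) = \frac{1}{|\Cl(\bbQ[\sqrt{-x}])|} \sum_\chi L_0(\chi,s)$ individually, expressing it as a product of Hecke $L$-functions of $K = \bbQ[\sqrt{-x}]$ times a factor which is analytic and nonzero on $\Re(s) > 1/2$. Let $\psi := \chi_{-1} \circ N_{K/\bbQ}$ denote the Hecke character of $K$ obtained by pulling the mod-$4$ character through the norm. I claim
\[
L_0(\chi, s) \;=\; L_K(s, \chi)\, L_K(s, \chi\psi)\, L(2s, \chi_{-1}\chi_x)^{-1}\, E_2(s),
\]
where $E_2(s)$ is an explicit local correction at $p = 2$.

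To establish this factorization I would compare Euler factors prime by prime. The key computational inputs are the closed-form identities $\sum_{i,j\geq 0}(i+j+1)a^i b^j = (1-ab)/((1-a)^2(1-b)^2)$, used at $P_{1,1}$, and $\sum_{i+j\text{ even}} a^i b^j = (1+ab)/((1-a^2)(1-b^2))$, used at $P_{0,1}$, together with analogous one-variable computations at $P_{1,0}$ and $P_{0,0}$. These show that at every unramified $p$ the ratio of the $L_0(\chi,s)$ factor to the product of the $L_K(s,\chi)$ and $L_K(s,\chi\psi)$ factors is exactly $1 - \chi_{-1}(p)\chi_x(p)/p^{2s}$, with no $p^{-s}$ term. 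The crucial simplification is that $\chi((p))=1$ for every rational prime $p$ because $(p)\subset \cO_K$ is principal, so the local correction depends only on the splitting type and not on $\chi$. The factors at $P_{1,ram}$ and $P_{0,ram}$ match exactly, and $p=2$ is absorbed into $E_2(s)$.

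Given the factorization, the conclusions follow from the classical analytic theory. The factor $L(2s, \chi_{-1}\chi_x)^{-1}$ is holomorphic and nonzero on $\Re(s) > 1/2$, since its inverse is given there by an absolutely convergent Euler product. Hecke $L$-functions of $K$ are entire for nontrivial characters and have simple poles at $s=1$ for the trivial character. For $\chi = \chi_{\triv}$, $L_K(s, \chi_{\triv}) = \zeta_K(s) = \zeta(s)L(s,\chi_x)$ contributes a simple pole at $s = 1$, while $L_K(s, \psi)$ is entire ($\psi$ is nontrivial on $I_K$: for $x > 1$ squarefree one has $\chi_x \neq \chi_{-1}$, so there exist primes $p\equiv 3\bmod 4$ splitting in $K$, on which $\psi(\mathfrak{p}) = -1$). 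For $\chi \neq \chi_{\triv}$, both $L_K(s,\chi)$ and $L_K(s, \chi\psi)$ are entire: the second because $\psi$ is not a class-group character (it takes value $-1$ on some principal ideals), so no class-group character can equal $\psi^{-1}$. Summing over $\chi$ reads off the simple pole at $s=1$ and the stated residue.

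The main obstacle is the local Euler-factor bookkeeping: at each of the six unramified splitting types plus $p=2$ one must verify the local identity exactly. An $O(p^{-s})$ discrepancy at a positive-density set of primes would prevent the correction from being absorbed into the absolutely convergent factor $L(2s,\chi_{-1}\chi_x)^{-1}$, and the claimed continuation to $\Re(s)>1/2$ would fail.
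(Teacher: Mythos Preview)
Your approach is essentially the paper's --- extract $L_K(s,\chi)\,L_K(s,\chi\psi)$ from $L_0(\chi,s)$ and observe that the remaining factor is benign on $\Re(s)>1/2$ --- but sharper: you produce an exact identity where the paper only argues ``up to absolutely convergent products of the form $\prod_p(1+a_{p,2}p^{-2s}+\cdots)$.'' Two corrections are in order.

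First, a harmless slip: the correction character is $\chi_x$, not $\chi_{-1}\chi_x$. Check $P_{0,0}$: there $\chi_{-1}(p)=-1$ and $\chi_x(p)=1$ (since $p\equiv 3\pmod 4$ inert in $K$ forces $(-x/p)=-1$, hence $(x/p)=1$), so your claimed local ratio $1-\chi_{-1}(p)\chi_x(p)p^{-2s}=1+p^{-2s}$ disagrees with the actual ratio $1-p^{-2s}$; the same mismatch occurs at $P_{0,1}$. This does not affect the argument, since any Dirichlet $L(2s,\cdot)^{-1}$ is holomorphic and nonvanishing on $\Re(s)>1/2$.

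Second, a genuine gap, which the paper's proof shares: your claim that $\psi$ is not a class-group character (``it takes value $-1$ on some principal ideals'') fails precisely when $x\equiv 1\pmod 4$. In that case every odd norm $a^2+xb^2\equiv a^2+b^2\pmod 4$ lies in $\{0,1,2\}$, so $\psi$ is $+1$ on all principal ideals coprime to $2$. Geometrically, the third quadratic subfield $\bbQ(\sqrt{x})\subset K(i)$ is then unramified at $2$, forcing $K(i)/K$ to be everywhere unramified, so $\psi$ \emph{is} the genus character of $\Cl(K)$ cutting out $K(i)$ (e.g.\ $x=5$, where $K(i)$ is the Hilbert class field of $\bbQ(\sqrt{-5})$). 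For $\chi=\psi$ one then has $\chi\psi$ trivial and $L_0(\psi,s)$ acquires a simple pole at $s=1$ as well, so the residue of $L_x(s)$ picks up a second contribution. This does not damage the downstream application, since Lemma~\ref{lem:Lxs_residue} only requires an $O(\cdot)$ upper bound on the residue and the extra term is of the same shape; but the residue formula as stated in the lemma is off by a bounded factor whenever $x\equiv 1\pmod 4$.
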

\begin{proof}
From the definition of $L_x(s)$, it suffices to meromorphically continue each $L_0(\chi,s)$. From the product definition of $L_0(\chi,s)$ as in Equation \ref{eqn:prod_def_of_Lxs}, we see that besides the product $$\prod_{p \in P_{1,1}}\left(1+\frac{2\chi(\pi)}{p^{s}}+\frac{2\chi(\bar\pi)}{p^{s}}+\frac{3\chi(\pi^2)}{p^{2s}}+\frac{3\chi(\pi\bar \pi)}{p^{2s}}+\frac{3\chi(\bar \pi^2)}{p^{2s}}+\dots\right),$$ everything else converges absolutely for $\Re(s)>1/2$. So, it suffices to continue this product. We may rewrite this product as $$\prod_{\pi, \chi_{-1}(N(\pi))=1} (1+\frac{\chi(\pi)}{N(\pi)^s})^2$$ up to products of the form $$\prod_p (1+\frac{a_{p,2}}{p^{2s}}+\frac{a_{p,3}}{p^{3s}}+\dots),$$ such that $f(i)=\max_p |a_{p,i}|$ is polynomially bounded independent of $x$. That is, $f(i) \leq P(i)$ for some polynomial $P$, not dependent on $x$. This final product is then again equal to $L(\chi,s)L(\chi \chi_{-1},s)$ (where these are Hecke L-functions) up to products of the form $$\prod_p (1+\frac{a_{p,2}}{p^{2s}}+\frac{a_{p,3}}{p^{3s}}+\dots)$$ with coefficients of bounded growth rate. Now $L(\chi,s)L(\chi \chi_{-1},s)$ is holomorphic for $\chi \neq \chi_{\triv}$, and meromorphic with a simple pole at $s=1$ for $\chi = \chi_{\triv}$. Since the intermediate products of the form $$\prod_p (1+\frac{a_{p,2}}{p^{2s}}+\frac{a_{p,3}}{p^{3s}}+\dots)$$ are absolutely convergent for $\Re(s)>1/2$, we have that $L_0(\chi,s)$ is holomorphic on $\Re(s)>1/2$ for $\chi \neq \chi_{\triv}$ and meromorphic with a simple pole at $s=1$ for $\chi=\chi_{\triv}$. Thus, as $L_x(s)=\frac{1}{|\Cl(\bbQ[\sqrt{-x}])|} \sum_{\chi} L_0(\chi,s)$, we have that $L_x(s)$ is meromorphic with a simple pole only at $s=1$ of residue $\Res_{s=1}L_0(\chi_{\triv},s)$.
\end{proof}

\begin{lemma}
\label{lem:Lxs_residue}
$$\Res_{s=1} L_x(s) = O(\frac{L(\chi_x,1)\prod_{p|x}(1+p^{-1})^2}{\sqrt x})$$
\end{lemma}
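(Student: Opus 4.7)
The plan is to factor $L_0(\chi_{\triv}, s)$ as a product of familiar Dirichlet L-functions up to a bounded Euler correction at the bad primes, read off its residue at $s=1$, and then divide by $|\Cl(\bbQ[\sqrt{-x}])|$ using Dirichlet's class number formula to supply the $1/\sqrt{x}$. By Lemma \ref{lem:Lxs_meromorphic_continuation}, $\Res_{s=1} L_x(s) = \Res_{s=1} L_0(\chi_{\triv}, s)/|\Cl(\bbQ[\sqrt{-x}])|$, so the task reduces to bounding the numerator.

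Setting $\chi = \chi_{\triv}$ in Equation \ref{eqn:prod_def_of_Lxs} and comparing Euler factors prime by prime, I would verify the identity
$$L_0(\chi_{\triv}, s) \,=\, \frac{\zeta(s)\, L(\chi_{-1}, s)\, L(\chi_K, s)\, L(\chi_K \chi_{-1}, s)}{L(\chi_K \chi_{-1}, 2s)} \cdot G(s),$$
where $\chi_K$ is the Kronecker character of $K = \bbQ[\sqrt{-x}]$ and $G(s)$ is an Euler product at the bad primes $p \mid 2x$, with each local factor at $s=1$ bounded by $(1+O(p^{-1}))^2$. For instance, at a prime $p \in P_{1,1}$ both sides equal $(1+p^{-s})(1-p^{-s})^{-3}$, at $p \in P_{0,0}$ both equal $(1-p^{-2s})^{-1}$, and the other splitting classes match analogously. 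Abstractly, this identity is the Rankin-Selberg convolution formula $\sum_n \sigma_{\chi_1}(n)\sigma_{\chi_2}(n)/n^s = \zeta(s)L(\chi_1,s)L(\chi_2,s)L(\chi_1\chi_2,s)/L(\chi_1\chi_2,2s)$ applied with $\chi_1 = \chi_{-1}$ and $\chi_2 = \chi_K$, using that $F(n)$ and the norm-$n$ ideal count $r_K(n)$ are essentially the twisted divisor sums $\sigma_{\chi_{-1}}(n)$ and $\sigma_{\chi_K}(n)$.

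Taking the residue at $s=1$ via $\Res_{s=1}\zeta(s)=1$, and absorbing $L(\chi_{-1}, 1) = \pi/4$ and $L(\chi_K\chi_{-1}, 2) = \Theta(1)$ into constants, one obtains $\Res_{s=1} L_0(\chi_{\triv}, s) = O(L(\chi_K, 1)\, L(\chi_K\chi_{-1}, 1)\, G(1))$. The Dirichlet class number formula gives $|\Cl(\bbQ[\sqrt{-x}])| = \Theta(\sqrt{x}\, L(\chi_K, 1))$ for squarefree $x \geq 2$, cancelling the $L(\chi_K, 1)$ factor. Since $\chi_K\chi_{-1}$ agrees with $\chi_x$ at every prime coprime to $2x$, $L(\chi_K\chi_{-1}, 1)$ differs from $L(\chi_x, 1)$ only by a bounded Euler correction at primes dividing $2x$; combining this with $G(1)$ yields the cumulative bad-prime factor $\prod_{p \mid x}(1+p^{-1})^2$ and hence the claim.

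The main obstacle is the careful local bookkeeping at the bad primes: one must verify that at each $p \mid x$ (where $p \in P_{1,\mathrm{ram}} \cup P_{0,\mathrm{ram}}$) and at $p=2$ (where $Z_{2,x,\chi_{\triv}}(s)$ takes three cases according to $x \bmod 8$), the product of the local factor of $G(s)$ and the local discrepancy between $L(\chi_K\chi_{-1}, s)$ and $L(\chi_x, s)$ at $s=1$ is bounded by $(1+p^{-1})^2$. Multiplying these bounds over $p \mid x$ produces the advertised factor.
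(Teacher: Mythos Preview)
Your proposal is correct and matches the paper's proof almost exactly: both factor $L_0(\chi_{\triv},s)$ as $\zeta(s)L(\chi_{-1},s)L(\chi_x,s)L(\chi_{-1}\chi_x,s)$ times a correction (your $L(\chi_K\chi_{-1},2s)^{-1}G(s)$ is precisely the paper's explicit local products $\prod_{P_{1,1}\cup P_{0,0}}(1-p^{-2s})\prod_{P_{1,0}\cup P_{0,1}}(1+p^{-2s})$ together with the ramified factors), read off the residue, and then apply the class number formula for $\bbQ[\sqrt{-x}]$ to convert $L(\chi_{-1}\chi_x,1)$ into $|\Cl(\bbQ[\sqrt{-x}])|/\sqrt{x}$. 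Your framing via the Ramanujan/Rankin--Selberg identity $\sum_n \sigma_{\chi_1}(n)\sigma_{\chi_2}(n)n^{-s} = \zeta L(\chi_1)L(\chi_2)L(\chi_1\chi_2)/L(\chi_1\chi_2,2s)$ is a clean conceptual shortcut for what the paper does by direct Euler-factor comparison, and your $\chi_K$ is the paper's $\chi_{-1}\chi_x$; otherwise the arguments are identical.
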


\begin{proof}
By Lemma \ref{lem:Lxs_meromorphic_continuation}, we restrict our attention to $\Res_{s=1}L_0(\chi_{\triv},s)$. \begin{align*}
    L_0(\chi_{\triv},s)&=Z_{2,x,\chi_0}(s)\prod_{p \in P_{1,1}}\left(1+\frac{4}{p^{s}}+\frac{9}{p^{2s}}+\dots\right)\prod_{p \in P_{1,0}}\left(1+\frac{3}{p^{2s}}+\frac{5}{p^{4s}}+\dots\right)\\
    &\prod_{p \in P_{0,1}}\left(1+\frac{3}{p^{2s}}+\frac{5}{p^{4s}}+\dots\right)\prod_{p \in P_{0,0}}\left(1+\frac{1}{p^{2s}}+\frac{1}{p^{4s}}+\dots\right)\\
    &\prod_{p \in P_{1,ram}}\left(1+\frac{2}{p^{s}}+\frac{3}{p^{2s}}+\dots\right)\prod_{p \in P_{0,ram}}\left(1+\frac{1}{p^{s}}+\frac{1}{p^{2s}}+\dots\right).
\end{align*}
First, we may bound $Z_{2,x,\chi_0}(s)$ independently of $x$ from above by $(1-2^{-1})^2 = 4$. We may rewrite the rest of the product as $$\prod_{p \in P_{1,1}}(1-p^{-s})^{-4}(1-p^{-2s})\prod_{p \in P_{1,0} \cup P_{0,1}}(1-p^{-2s})^{-2}(1+p^{-2s})\prod_{p \in P_{0,0}}(1-p^{-2s})^{-1}$$\begin{equation}
    \label{eqn:temp5}
    \prod_{p \in P_{1,ram}}(1-p^{-s})^{-2}\prod_{p \in P_{0,ram}}(1-p^{-2s})^{-1}.
\end{equation} Then, one may check local factor by local factor that $$\zeta(s)L(\chi_{-1},s)L(\chi_x,s)L(\chi_{-1}\chi_x,s)\prod_{p \in P_{1,1}\cup P_{0,0}}(1-p^{-2s})\prod_{p \in P_{1,0}\cup P_{0,1}}(1+p^{-2s})$$$$\prod_{p \in P_{1,ram}}(1-p^{-s})^{-2}\prod_{p \in P_{0,ram}}(1-p^{-2s})^{-1}$$ is equal to the product in Equation \ref{eqn:temp5}. Now at $s=1$, there's a pole with residue which is bounded above by $$ O(L(\chi_x,1)L(\chi_{-1}\chi_x,1)\prod_{p|x}(1+p^{-1})^2).$$ By the class number formula, $$L(\chi_{-1}\chi_x,1)=\frac{\pi |\Cl(\bbQ[\sqrt{-x}])|}{\sqrt x}$$ for all $0<x \neq 3$, and for $x=3$, it's just $\frac{1}{3}$ of the above. So, the residue of $L(\chi_{\triv},s)$ at $s=1$ is bounded above by $$O(\frac{L(\chi_x,1)\prod_{p|x}(1+p^{-1})^2|\Cl(\bbQ[\sqrt{-x}])|}{\sqrt x}).$$ Thus, the residue of $L_x(s)$ is $$O(\frac{L(\chi_x,1)\prod_{p|x}(1+p^{-1})^2}{\sqrt x}).$$
\end{proof}
\begin{prop}
\label{prop:estimate_on_sum_of_F(n)Gx(n)}
Assuming GRH, for all $\epsilon>0$, $\sum_{n=1}^X F(n)G_x(n)=X\Res_{s=1} L_x(s)+O_{\epsilon}(X^{1/2+\epsilon}x^{\epsilon})$.
\end{prop}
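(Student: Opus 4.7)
The plan is to apply the Wiener–Ikehara Tauberian theorem with error (Theorem \ref{thm:appendix_wiener_ikehara_with_error}) to the Dirichlet series $L_x(s)$. This requires three inputs: meromorphic continuation past $\Re(s) = 1$, identification of the residue at $s = 1$, and a polynomial-growth bound on $L_x(s)$ along some vertical line $\Re(s) = 1/2 + \delta$. The first two are supplied by Lemmas \ref{lem:Lxs_meromorphic_continuation} and \ref{lem:Lxs_residue}, which together produce the main term $X \Res_{s=1} L_x(s)$. The remaining task is to establish the required bound on the critical line, and doing so with the right $x$-dependence is the heart of the proof.

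I would decompose $L_x(s)$ exactly as in the proof of Lemma \ref{lem:Lxs_residue}: average over class-group characters of $\bbQ[\sqrt{-x}]$, and for each $\chi$ write $L_0(\chi,s)$ as a product of two Hecke $L$-functions $L(\chi,s)\, L(\chi \chi_{-1},s)$ times a corrective Dirichlet series $P_{\chi,x}(s)$ which is absolutely convergent on $\Re(s) > 1/2$ and whose nontrivial Euler factors are supported at primes dividing $2x$. Under GRH, each Hecke $L$-function satisfies the Lindel\"of bound $L(\chi, 1/2 + \delta + it) \ll_\delta (\mathrm{N}(\mathfrak{f}_\chi)(|t|+2))^\epsilon \ll_\delta (x(|t|+2))^\epsilon$, since the conductor of $\chi$ is polynomial in $x$. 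A term-by-term estimate, together with the elementary bound $\omega(x) \ll \log x / \log \log x$, yields $|P_{\chi,x}(1/2+\delta+it)| \ll_\epsilon x^\epsilon$. Combining these gives the uniform bound $L_x(1/2+\delta+it) \ll_\epsilon (x(|t|+2))^\epsilon$, which inserted into the Tauberian theorem produces the claimed error $O_\epsilon(X^{1/2+\epsilon} x^\epsilon)$.

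The main obstacle is keeping the error polynomial in $x^\epsilon$ rather than picking up a larger power of $x$. The conductors of the Hecke $L$-functions grow with $x$, and without GRH the convexity bound would cost us a positive power of $x$ and also weaken the exponent of $X$ past $1/2$; under GRH the Lindel\"of bound closes both gaps simultaneously. A secondary subtlety is the handling of the local factors of $P_{\chi,x}$ at primes dividing $x$ and of the special Euler factor $Z_{2,x,\chi}$ at the prime $2$; the former are absorbed by a divisor-type estimate on $\{p : p \mid x\}$, and the latter is bounded by an absolute constant independent of $x$ and $\chi$, so it contributes nothing to the $x$-dependence.
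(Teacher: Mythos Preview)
Your proposal is correct and follows essentially the same route as the paper: average over class-group characters, factor each $L_0(\chi,s)$ as $L(\chi,s)L(\chi\chi_{-1},s)$ times a corrective product, invoke GRH for the Lindel\"of bound on the Hecke $L$-functions, absorb the ramified local factors into $x^\epsilon$ via $\omega(x) \ll \log x/\log\log x$, and feed the resulting estimate into Theorem~\ref{thm:appendix_wiener_ikehara_with_error}. The only slip is a citation: the Hecke $L$-function factorization appears in the proof of Lemma~\ref{lem:Lxs_meromorphic_continuation}, not Lemma~\ref{lem:Lxs_residue}.
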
 
\begin{proof}
We use the Wiener-Ikehara Tauberian theorem. We need to bound $L_x(s)$ in the range $1/2+\epsilon < \Re(s) \leq 1$. Now as in the proof of Lemma \ref{lem:Lxs_meromorphic_continuation}, we have that $L_0(\chi,s)$ is equal to $L(\chi,s)L(\chi \chi_{-1},s)$ up to products of following forms:
\begin{enumerate}
    \item $\prod_p (1+\frac{a_{p,2}}{p^{2s}}+\frac{a_{p,3}}{p^{3s}}+\dots)$, where $f(i)=\max_p |a_{p,i}|$ satisfies $f(i) \leq P(i)$ for some polynomial $P$ independent of $x$.
    \item local factors of the primes ramified in either $\bbQ[i]$ or $\bbQ[\sqrt{-x}]$.
\end{enumerate} Now, for $\Re(s)=1/2+\epsilon$, $$\left|\prod_p (1+\frac{a_{p,2}}{p^{2s}}+\frac{a_{p,3}}{p^{3s}}+\dots)\right| \leq \prod_p (1+\frac{P(2)}{p^{1+2\epsilon}}+\frac{P(3)}{p^{3/2+3\epsilon}}+\dots) = O_{\epsilon}(1).$$ For the local factors of the ramified primes, in the case where $p \in P_{1,ram}$, for example, we have $$|(1+\frac{2 \chi(\pi)}{p^s}+\frac{3\chi(\pi^2)}{p^{2s}}+\dots)| = |(1-\chi(\pi)p^{-s})|^{-2} \leq (1-p^{-1/2})^{-2} \leq 12.$$ Thus, the contribution coming from the product over these local factors is bounded by $12^{\omega(x)+1} = O(x^{\epsilon})$. Thus, for $\Re(s)=1/2+\epsilon$, $|L_0(\chi,s)|=O_{\epsilon}(x^{\epsilon}|L(\chi,s)L(\chi \chi_{-1},s)|)$. Now $L(\chi,s)$ is an $L$-function with analytic conductor $O(x)$. So, by GRH, we have that for $s=1/2+it$, $|L(\chi,s)| = O_{\delta}((xt)^\delta)$ for any $\delta>0$. Thus, by the Phragm\'en-Lindel\"of Principle, we also have that $|L(\chi,s)| = O_{\delta,\epsilon}((xt)^\delta)$ for $s=1/2+\epsilon+it$. The same holds for $|L(\chi\chi_{-1},s)|$. Thus, we obtain the bound \begin{equation}
\label{eqn:temp6}
    |L_0(\chi,s)| = O_{\delta,\epsilon}(x^{\epsilon}(xt)^{\delta}) 
\end{equation} for $s=1/2+\epsilon+it$ and any $\epsilon>0$. Since $L_x(s)$ is an average of $L_0(\chi,s)$ as $\chi$ varies over $\widehat{\Cl(\bbQ[\sqrt{-x}])}$, we have the same bound for $|L_x(s)|$.

Now we are in the hypothesis of the Wiener-Ikehara Tauberian theorem as in Theorem \ref{thm:appendix_wiener_ikehara_with_error} with $\kappa=1$ and $r$ the implied constant in Equation \ref{eqn:temp6}. So, combining with Lemmas \ref{lem:Lxs_residue} and \ref{lem:Lxs_meromorphic_continuation}, we get that the sum of the coefficients of $L_x(s)$ up to $X$ is $$X\Res_{s=1}L_x(s)+O_{\epsilon,\delta}(X^{1/2+\epsilon}x^{\delta+\epsilon}).$$ We are done after choosing $\delta = \epsilon$.
\end{proof}

\begin{cor}
\label{cor:upper_bound_Sx}
Assuming GRH, for all $\epsilon>0$, $|S_x|=O_{\epsilon}(B\frac{L(\chi_x,1)\prod_{p|x}(1+p^{-1})^2}{x}+B^{1/2+\epsilon}x^{-1/4+\epsilon})$
\end{cor}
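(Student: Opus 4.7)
The plan is to combine the ingredients already assembled in this section: the bound $|S_x| \leq 4\omega_x \sum_{n=1}^{2B/\sqrt{x}} F(n) G_x(n)$ established at the start of the section, the asymptotic for the partial sum of $F(n)G_x(n)$ given by Proposition \ref{prop:estimate_on_sum_of_F(n)Gx(n)}, and the residue bound from Lemma \ref{lem:Lxs_residue}. This is essentially a routine substitution, with no real analytic obstacle beyond bookkeeping.

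First I would note that $\omega_x$, the size of the unit group of $\cO_{\bbQ[\sqrt{-x}]}$, is uniformly bounded (equal to $2$ for all $x > 3$, and $4$ or $6$ in the exceptional cases), so the factor $4\omega_x$ contributes only an absolute constant and may be absorbed into the $O$-notation. Thus
\[
|S_x| \;\ll\; \sum_{n=1}^{\lfloor 2B/\sqrt{x}\rfloor} F(n) G_x(n).
\]

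Next I would apply Proposition \ref{prop:estimate_on_sum_of_F(n)Gx(n)} with $X = 2B/\sqrt{x}$, yielding
\[
\sum_{n \leq 2B/\sqrt{x}} F(n)G_x(n) \;=\; \frac{2B}{\sqrt{x}} \Res_{s=1}L_x(s) \;+\; O_\epsilon\!\left(\bigl(2B/\sqrt{x}\bigr)^{1/2+\epsilon} x^{\epsilon}\right).
\]
Inserting the residue bound $\Res_{s=1}L_x(s) = O\bigl(L(\chi_x,1)\prod_{p\mid x}(1+p^{-1})^2/\sqrt{x}\bigr)$ from Lemma \ref{lem:Lxs_residue} gives a main term of size
\[
O\!\left(\frac{B \, L(\chi_x,1)\prod_{p\mid x}(1+p^{-1})^2}{x}\right).
\]
For the error term, $(B/\sqrt{x})^{1/2+\epsilon} x^{\epsilon} = B^{1/2+\epsilon} x^{-1/4 - \epsilon/2 + \epsilon} \ll B^{1/2+\epsilon} x^{-1/4+\epsilon}$, so the total upper bound is
\[
|S_x| \;=\; O_\epsilon\!\left(\frac{B \, L(\chi_x,1)\prod_{p\mid x}(1+p^{-1})^2}{x} + B^{1/2+\epsilon} x^{-1/4+\epsilon}\right),
\]
which is precisely the claimed bound.

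The only step that requires any care is justifying that the dropped factor $\omega_x$ is genuinely $O(1)$ uniformly in $x$, and that the two $\epsilon$'s (the one from Proposition \ref{prop:estimate_on_sum_of_F(n)Gx(n)} and the rescaling of $x$-exponents) can be unified by redefining $\epsilon$ at the end. There is no substantive obstacle here; the real work has already been done in Lemmas \ref{lem:Lxs_meromorphic_continuation}, \ref{lem:Lxs_residue}, and Proposition \ref{prop:estimate_on_sum_of_F(n)Gx(n)}.
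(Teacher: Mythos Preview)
Your proof is correct and follows exactly the approach the paper takes: the paper's proof is simply ``Combine Lemma \ref{lem:Lxs_residue} and Proposition \ref{prop:estimate_on_sum_of_F(n)Gx(n)}, and apply it to the case where $X=\frac{B}{\sqrt{x}}$,'' and you have faithfully expanded this one-liner, including the minor bookkeeping of absorbing $\omega_x=O(1)$ and reconciling the $\epsilon$-exponents.
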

\begin{proof}
Combine Lemma \ref{lem:Lxs_residue} and Proposition \ref{prop:estimate_on_sum_of_F(n)Gx(n)}, and apply it to the case where $X=\frac{B}{\sqrt{x}}$.
\end{proof}
To conclude, we just need to sum over $x \leq B^{2/3-\epsilon}$.

\begin{proof}[Proof of Proposition \ref{prop:sum_of_Sx_for_x_small}]
By Corollary \ref{cor:upper_bound_Sx}, we just need to upper bound $$\sum_{x \geq 2, \text{ squarefree}}^{B^{2/3-\epsilon}}O_{\delta}\left(B\frac{L(\chi_x,1)\prod_{p|x}(1+p^{-1})^2}{x}+B^{1/2+\delta}x^{-1/4+\delta}\right)$$ for some small $\delta$ to be chosen later. Applying the sum, we obtain that the error term is $B^{1/2+\delta}B^{(2/3-\epsilon)(3/4+\delta)}=B^{1+\frac{5}{3}\delta-\frac{3}{4}\epsilon-\epsilon\delta}$. So, making the choice of $\delta=\frac{9}{20}\epsilon$, we get an error term of $O(B^{1-\epsilon\delta})=O(B)$, as desired. Now we may focus on our first term.

We wish to estimate $$\sum_{x \geq 2, \text{ squarefree}}^{B^{2/3-\epsilon}}B\frac{L(\chi_x,1)\prod_{p|x}(1+p^{-1})^2}{x}.$$ We have $\prod_{p|x}(1+p^{-1})^2 =O(\prod_{p|x}(1+2p^{-1}))=O(\sum_{d|x}\tau(d)/d)$. So, it suffices to bound $$\sum_{x \geq 2, \text{ squarefree}}^{B^{2/3-\epsilon}}\frac{L(\chi_x,1)\sum_{d|x}\tau(d)/d}{x}$$ by $O(\log B)$. We have \begin{align*}
    \sum_{x \geq 2, \text{ squarefree}}^{B^{2/3-\epsilon}}\frac{L(\chi_x,1)\sum_{d|x}\tau(d)/d}{x}&=\sum_{d=1}^{B^{2/3-\epsilon}}\frac{\tau(d)}{d}\sum_{x' \geq 2, dx' \text{ squarefree}}^{B^{2/3-\epsilon}d^{-1}}\frac{L(\chi_{dx'},1)}{dx'}\\
    &=\sum_{d=1}^{B^{2/3-\epsilon}}\frac{\tau(d)}{d^2}\sum_{x' \geq 2}^{B^{2/3-\epsilon}d^{-1}}\frac{|\mu(dx')|L(\chi_{dx'},1)}{x'}\\
    &=\sum_{d=1}^{B^{2/3-\epsilon}}\frac{\tau(d)|\mu(d)|}{d^2}\sum_{x' \geq 2, \gcd(d,x')=1}^{B^{2/3-\epsilon}d^{-1}}\frac{|\mu(x')|L(\chi_{dx'},1)}{x'}\\
    &=\sum_{d=1}^{B^{2/3-\epsilon}}\frac{\tau(d)|\mu(d)|}{d^2}\sum_{x' \geq 2}^{B^{2/3-\epsilon}d^{-1}}\frac{|\mu(x')|L(\chi_{dx'},1)}{x'},
\end{align*} where the last inequality used the fact that $L(\chi_{dx'},1) \geq 0$ for all $d,x'$.


We will record the bound of the inner sum in a lemma, to be used later as well.

\begin{lemma}
\label{lem:ayoub_sum}
$$\sum_{x' \geq 2}^{X}\frac{|\mu(x')|L(\chi_{dx'},1)}{x'}=O(\log X + d^{1/2}\log(d))$$
\end{lemma}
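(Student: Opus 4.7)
The plan is the classical Ayoub-style argument: truncate the Dirichlet series for $L(\chi_{dx'},1)$, interchange the order of summation, and isolate the contribution from perfect-square indices $n$. The first piece of the bound, $O(\log X)$, will come from this diagonal contribution; the second piece, $O(\sqrt{d}\log d)$, will come from the off-diagonal remainder together with a classical bound on $L(1,\chi_d)$.

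Concretely, Polya--Vinogradov with partial summation yields, for any cutoff $Y$,
$$L(\chi_{dx'},1) \;=\; \sum_{n\leq Y}\frac{\chi_{dx'}(n)}{n}\;+\;O\!\left(\frac{\sqrt{dx'}\,\log(dx')}{Y}\right).$$
Weighted by $|\mu(x')|/x'$ and summed over $x' \leq X$, the tail contributes $O(\sqrt{dX}\log(dX)/Y)$, which becomes admissible by choosing $Y$ slightly larger than $\sqrt{X}$. In the truncated main term I swap summations and use multiplicativity of the Jacobi symbol in its top argument, $\chi_{dx'}(n) = \chi_d(n)\chi_{x'}(n)$. Writing $n = m^2 k$ with $k$ squarefree, the function $x' \mapsto \chi_{x'}(n) = (x'/n)$ equals $(x'/k)$ when $\gcd(x',m) = 1$ and vanishes otherwise.

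The diagonal case $k = 1$ produces a trivial character in $x'$, and the inner sum collapses to $\sum_{x'\leq X,\,(x',m)=1}|\mu(x')|/x' = O(\log X)$; summing over $m$ with weight $1/m^2$ yields the first piece $O(\log X)$. For $k \geq 2$, the function $x' \mapsto (x'/k)$ is (up to a sign depending on $x'\bmod 4$) a nontrivial real Dirichlet character of conductor $O(k)$, so Polya--Vinogradov plus partial summation give $\sum_{x'\leq X}|\mu(x')|(x'/k)/x' = O(\sqrt{k}\,\log k)$ uniformly in $X$. The remaining $d$-dependence enters only through the character $\chi_d(k) = (d/k)$; extracting it by treating $(d/k)$ as a character in the variable $k$, and invoking the classical bound $L(1,\chi_d) \ll \log d$ together with a second application of Polya--Vinogradov in the $k$-aspect, produces the second piece $O(\sqrt{d}\,\log d)$.

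The main obstacle is the off-diagonal step: a term-by-term bound of $O(\sqrt{k}\log k/k)$ summed over $k$ diverges, so sharp extraction of $\sqrt{d}\log d$ requires a second-order cancellation---either through Polya--Vinogradov in the $k$-aspect or by reassembling the off-diagonal terms into an auxiliary $L$-series attached to $d$. Sign issues from quadratic reciprocity at even moduli or from negative Kronecker symbols are routine and can be handled by splitting $x'$ into residue classes modulo $8$.
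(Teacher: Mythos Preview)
Your ingredients are the same as the paper's---truncate the $L$-series, swap $n$ and $x'$, separate the perfect-square $n$ from the rest, and use Polya--Vinogradov---but the order of operations is different, and that difference is where your off-diagonal step breaks.

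The paper does partial summation in $x'$ \emph{first}, reducing to
\[
\sum_{x'\le X}\frac{|\mu(x')|L(\chi_{dx'},1)}{x'}
=O\!\left(\sum_{k\le X}\frac{1}{k^2}\sum_{x'\le k}|\mu(x')|L(\chi_{dx'},1)\right),
\]
and only then truncates the $L$-series at the \emph{variable} cutoff $n=k$. The tail $n>k$ produces $O(d^{1/2}k^{1/2}\log(dk))$, which after the outer $1/k^{2}$ sum yields the $d^{1/2}\log d$ piece. In the main term $n\le k$, the inner sum is now \emph{unweighted}, $\sum_{x'\le k}|\mu(x')|(x'/n)$; writing $|\mu(x')|=\sum_{l^2\mid x'}\mu(l)$ and balancing Polya--Vinogradov against the trivial bound gives $O(k^{1/2}n^{1/4}\log^{1/2}n)$ for non-square $n$, so summing over $n\le k$ with weight $1/n$ gives $O(k^{3/4}\log^{1/2}k)$---strictly smaller than $k$---and the outer $1/k^{2}$ sum contributes $O(1)$. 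The square-$n$ part gives $O(k)$ and hence the $O(\log X)$.

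In your setup the inner off-diagonal sum carries the weight $1/x'$, so partial summation plus Polya--Vinogradov only gives $\sum_{x'\le X}|\mu(x')|(x'/k)/x'=O(\log k)$ (in fact your stated $O(\sqrt{k}\log k)$ is needlessly weak, but even the sharp bound is not enough). Summing $O(\log k)/k$ over $k\le Y$ yields $O(\log^{2}Y)$, one logarithm too many. Your proposed rescue via cancellation in $(d/k)$ cannot close this gap: the extra logarithm is an $X$-aspect problem, not a $d$-aspect one, and the factor you would sum $(d/k)$ against is $\sum_{x'\le X}|\mu(x')|(x'/k)/x'$, which depends on $k$ through the \emph{character} $(\cdot/k)$ rather than through its size, so neither Polya--Vinogradov in $k$ nor packaging into an $L$-series in $d$ applies cleanly. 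Note also that in the paper the term $d^{1/2}\log d$ comes entirely from the tail; the factor $(d/n)$ is never used for cancellation, only bounded by $1$. The missing idea is the initial Abel summation in $x'$ (equivalently, a cutoff $Y$ that varies with $x'$).
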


\begin{proof}
We may use summation by parts to get $$\sum_{x' \geq 2}^{X}\frac{|\mu(x')|L(\chi_{dx'},1)}{x'}=O\left(\sum_{k=1}^{X} \frac{\sum_{x'=1}^k |\mu(x')|L(\chi_{dx'},1)}{k^2}\right).$$ 


For the inner sum, we have $$\sum_{x'=1}^k |\mu(x')|L(\chi_{dx'},1)=\sum_{x'=1}^k |\mu(x')|\sum_{n=1}^\infty \frac{\left(\frac{dx'}{n}\right)}{n}.$$ Now, we will split the infinite sum over $n$ into two parts. The first part will be \begin{equation}
\label{eqn:temp4}
    \sum_{x'=1}^k |\mu(x')|\sum_{n=k}^{\infty} \frac{\left(\frac{dx'}{n}\right)}{n}=\sum_{x'=1}^k |\mu(x')|O(\frac{\sqrt{dx'} \log (dx')}{k})=O(d^{1/2}k^{1/2}\log(dk))
\end{equation} where the first equality is obtained through summation by parts and Polya-Vinogradov. So, we are left with the other part, $$\sum_{x'=1}^k |\mu(x')|\sum_{n=1}^k \frac{(\frac{dx'}{n})}{n}.$$ Here, we exchange summation and consider \begin{equation} \label{eqn:temp5151}
    \sum_{n=1}^k \frac{1}{n}\sum_{x'=1}^k |\mu(x')|\left(\frac{dx'}{n}\right)=\sum_{n=1}^k \frac{1}{n} \left(\frac{d}{n}\right)\sum_{x'=1}^k |\mu(x')|\left(\frac{x'}{n}\right).
\end{equation} We separate this sum into two cases, depending on whether $n$ is a square. So, we separate Equation \ref{eqn:temp5151} into $$\sum_{n=1, n \text{ is a square}}^k \frac{1}{n} \left(\frac{d}{n}\right)\sum_{x'=1}^k |\mu(x')|\left(\frac{x'}{n}\right)+\sum_{n=1, n \text{ is not a square}}^k \frac{1}{n} \left(\frac{d}{n}\right)\sum_{x'=1}^k |\mu(x')|\left(\frac{x'}{n}\right)$$ First we deal with the case where $n$ is a square, where we are happy to use the trivial estimate $|\left(\frac{d}{n}\right)| = |\left(\frac{x'}{n}\right)| = 1$, and we obtain $$\sum_{n=1, n \text{ is a square}}^k \frac{1}{n} \left(\frac{d}{n}\right)\sum_{x'=1}^k |\mu(x')|\left(\frac{x'}{n}\right) \leq \sum_{n=1, n \text{ is a square}}^k \frac{k}{n} = O(\sum_{n'=1}^{\sqrt{k}} \frac{k}{n'^2}) = O(k).$$

Now we turn to the case where $n$ is not a square and estimate $$\sum_{n=1, n \text{ is not a square}}^k \frac{1}{n} \left(\frac{d}{n}\right)\sum_{x'=1}^k |\mu(x')|\left(\frac{x'}{n}\right).$$Using the fact that $|\mu(x')| = \sum_{l^2| x'} \mu(l)$, we obtain $$\sum_{x'=1}^k |\mu(x')|\left(\frac{x'}{n}\right) = \sum_{x'=1}^k \sum_{l^2|x'} \mu(l) \left(\frac{x'}{n}\right) = \sum_{l=1}^{\sqrt{k}} \mu(l) \sum_{x''=1}^{k/l^2} \left( \frac{x''}{n}\right),$$ where in the last equality, we made the substitution $x'' = l^2 x'$. By Polya-Vinogradov, if $n$ is not a square, $\left\lvert\sum_{x'=1}^{k/l^2}\left(\frac{x'}{n}\right)\right\lvert=O(\sqrt{n} \log n)$, and a trivial estimate gives us that it is $O(k/l^2)$. Hence, $$\left\lvert\sum_{x'=1}^{k/l^2}\left(\frac{x'}{n}\right)\right\lvert=O(\min(\sqrt{n}\log n, k/l^2)).$$ Thus, \begin{align*}
    \sum_{l=1}^{\sqrt{k}} \mu(l) \sum_{x''=1}^{k/l^2} \left( \frac{x''}{n}\right) &= O( \sum_{l=1}^{\sqrt{k}} \min(\sqrt{n}\log n, k/l^2)) \\
    &= O\left(\sum_{l=1}^{\sqrt{k}/(\sqrt{n} \log n))^{1/2}} \sqrt{n} \log n + \sum_{l=\sqrt{k}/(\sqrt{n} \log n))^{1/2}}^{\sqrt{k}} k/l^2 \right)\\
    &= O(k^{1/2}n^{1/4}\log^{1/2}(n))
\end{align*} when $n$ is not a square. Using this estimate, we have that \begin{align*}
    \sum_{n=1, n \text{ is not a square}}^k \frac{1}{n} \left(\frac{d}{n}\right)\sum_{x'=1}^k |\mu(x')|\left(\frac{x'}{n}\right) &= O\left(\sum_{n=1, n \text{ is not a square}}^k \frac{k^{1/2}n^{1/4}\log^{1/2}(n)}{n}\right) \\
    &= O(k^{3/4} \log^{1/2}(k)) \\
    &= O(k).
\end{align*}


Adding to Equation \ref{eqn:temp4}, we obtain $$\sum_{x'=1}^k |\mu(x')|L(\chi_{dx'},1)=O(k+(dk)^{1/2}\log(dk)).$$


Finally, we sum over $k$. We have \begin{align*}
    \sum_{k=1}^{X} \frac{\sum_{x'=1}^k |\mu(x')|L(\chi_{dx'},1)}{k^2}&=\sum_{k=1}^{X}\frac{O(k+d^{1/2}k^{1/2}\log (dk))}{k^2}\\
    &=O(\log X+d^{1/2}\log(d)).
\end{align*}
\end{proof}

With the lemma, we have $$\sum_{x' \geq 2}^{B^{2/3-\epsilon}d^{-1}}\frac{|\mu(x')|L(\chi_{dx'},1)}{x'}=O(\log (B^{2/3-\epsilon}d^{-1}))+O(d^{1/2}\log(d)).$$ Thus, \begin{align*}
    \sum_{d=1}^{B^{2/3-\epsilon}}\frac{\tau(d)|\mu(d)|}{d^2}\sum_{x'=1,(x',d)=1}^{B^{2/3-\epsilon}d^{-1}}\frac{|\mu(x')|L(\chi_{dx'},1)}{x'}&=\sum_{d=1}^{B^{2/3-\epsilon}}\frac{\tau(d)|\mu(d)|}{d^2}(O(\log (B^{2/3-\epsilon}d^{-1}))+O(d^{1/2}\log(d)))\\
    &=O(\log(B^{2/3-\epsilon})+O(1)\\
    &=O(\log(B)).
\end{align*}
\end{proof}

\section{Estimate of $|S_x|$ for $-1 \geq x \geq B^{2/5-\epsilon}$}\label{sec:S_x bound negative}

In this section, we prove Proposition \ref{prop:upper_bound_small_x_negative}. Recall the definition $$S_x = \{(a',b,c'): \sqf(a'^2+b^2-c'^2)=x,  \max(|a'|,|b|,|c'|)^2 \sqrt{|x|} \leq B\}.$$ We will introduce a new set $S_x'$ which is easier to count and satisfies $|S_x| \leq |S_x'|$. We first write $a'^2+b^2-c'^2=xy^2$, for some $y$. Rearranging the terms, we get $a'^2-xy^2=c'^2-b^2=(c'-b)(c'+b)$. Making the linear change of variables $b' = c'-b$ and $c'' = c'+b$, we have that $S_x$ is in bijection with $\{(a',b',c''): \sqf(a'^2-b'c'')=x, \max(|a'|,|\frac{c''-b'}{2}|,|\frac{c''+b'}{2}|)^2 \sqrt{|x|} \leq B, c'' \equiv b' \mod 2\}$. For the sake of an upper bound, we may forget the condition $c'' \equiv b' \mod 2$, and also we have that $$\max(|a'|,|b'|,|c''|) = \max(|a'|,|c'-b|,|c'+b|) \leq 2 \max(|a'|,|b|,|c'|) = 2 \max(|a'|,|\frac{c''-b'}{2}|,|\frac{c''+b'}{2}|).$$ Hence, defining $$S_x' := \{(a',b',c''): \sqf(a'^2-b'c'')=x, \max(|a'|,|b'|,|c''|)^2 \sqrt{|x|} \leq B\},$$ we have that $|S_x| \leq |S_x'|$, and so now it suffices to bound $S_x'$.

Note that by definition of $S_x'$, any $(a',b',c'') \in S_x'$ must satisfy that $b'c'' \leq B/\sqrt{|x|}$. Now, writing $a'^2-b'c''=xy^2$, and rearranging, we get $b'c''=a'^2-xy^2$. Now define $\tau'(n)$ to be the number of divisors $d$ of $n$ so that both $d$ and $n/d$ are bounded by $(B/\sqrt{|x|})^{1/2}$. Then, $\tau'(n)G_{-x}(n)$ is the number of solutions $a',b',c'',y$ to the equation $b'c''=a'^2-xy^2=n$. So, for a fixed negative $x$, we have $|S_x'|=\sum_{n=1}^{B/\sqrt{|x|}}\tau'(n)G_{-x}(n)$. However, $\tau'$ is not a multiplicative function like $F(n)$, so we will approach this summation slightly differently. Denote by $\mathbbm{1}_{[1,(B/\sqrt{|x|})^{1/2}]}$ the characteristic function of the interval $[1,(B/\sqrt{|x|})^{1/2}]$. Then, we have \begin{align*}
    \sum_{n=1}^{B/\sqrt{|x|}}\tau'(n)G_{-x}(n)&=\sum_{n=1}^{B/\sqrt{|x|}}G_{-x}(n)\sum_{d|n} \mathbbm{1}_{[1,(B/\sqrt{|x|})^{1/2}]}(d)\mathbbm{1}_{[1,(B/\sqrt{|x|})^{1/2}]}(n/d)\\
    &=\sum_{d=1}^{(B/\sqrt{|x|})^{1/2}} \sum_{n'=1}^{(B/\sqrt{|x|})/d}G_{-x}(dn')\mathbbm{1}_{[1,(B/\sqrt{|x|})^{1/2}]}(n')\\
    &=\sum_{d=1}^{(B/\sqrt{|x|})^{1/2}} \sum_{n'=1}^{(B/\sqrt{|x|})^{1/2}}G_{-x}(dn').
\end{align*} Now we will use the theory of $L$-functions and Tauberian theorems to estimate the inner sum, and subsequently the outer sum. 

Let $x'=-x$. We need to estimate $$\sum_{d=1}^{(B/\sqrt{x'})^{1/2}} \sum_{n'=1}^{(B/\sqrt{x'})^{1/2}}G_{x'}(dn')$$ as $x'$ ranges between the squarefree numbers in the interval $[2,B^{2/5-\epsilon}]$. Fixing $d$, we need to find an estimate on $$\sum_{n'=1}^{(B/\sqrt{x'})^{1/2}}G_{x'}(dn').$$ We do so by considering the $L$-function $L_{x',d}(s)=\sum_{n=1}^\infty \frac{a_{x',d,n}}{n^s}$, where $$a_{x',d,n} = \begin{cases}|\{\text{ideals } I \subset \cO_{\bbQ[\sqrt{-x'}]}: N(I)=n, I \text{ is principal}\}| & d|n \\
0 & \text{ otherwise}
\end{cases},$$ so that $$\sum_{n'=1}^{(B/\sqrt{x'})^{1/2}}G_{x'}(dn')=\sum_{n=1}^{d(B/\sqrt{x'})^{1/2}} a_{x',d,n}.$$ We will apply the Wiener-Ikehara Tauberian theorem here. Thus, as we did in the previous section, we will do the following. \begin{enumerate}
    \item We show that $L_{x',d}(s)$ can be meromorphically continued to $\Re(s)>1/2$, with a simple pole at $s=1$.
    \item We find the residue of $L_{x',d}(s)$ at $s=1$.
    \item We use the Wiener-Ikehara Tauberian theorem as in Theorem \ref{thm:appendix_wiener_ikehara_with_error} to obtain an estimate on the sum of the coefficients on $L_{x',d}(s)$, with an acceptable error term.
\end{enumerate}

We will follow similar steps as the last section. Let $\chi \in \widehat{\Cl(\bbQ[\sqrt{-x'}])}$. Then, define $L_d(\chi,s)=\sum_{n=1}^{\infty} \frac{a_{\chi,d,n}}{n^s}$, where $$a_{\chi,d,n} = \begin{cases} \sum_{\text{ideals } I \subset \cO_{\bbQ[\sqrt{-x'}]}: N(I)=n} \chi(I) & d|n \\
0 & \text{ otherwise}
\end{cases},$$ so that $L_{x',d}(s) = \frac{1}{|\Cl(\bbQ[\sqrt{-x'}])|}\sum_{\chi \in \widehat{\Cl(\bbQ[\sqrt{-x'}])|}}L_d(\chi,s).$


Let \begin{align*}
    &P_{1}=\{\text{primes }p | p \text{ splits in }\bbQ[\sqrt{-x'}]\}\\
    &P_{0}=\{\text{primes }p | p \text{ is inert in }\bbQ[\sqrt{-x'}]\}\\
    &P_{ram}=\{\text{primes }p | p \text{ ramifies in }\bbQ[\sqrt{-x'}]\}.
\end{align*} Also let $v_p(d)$ denote the exponent of $p$ in $d$.

Define $$L_{x',d}(s)=\frac{1}{|\Cl(\bbQ[\sqrt{-x'}])|}\sum_{\chi \in \widehat{\Cl(\bbQ[\sqrt{-x'}])}} L_d(\chi,s)$$ where $$L_d(\chi,s)=\prod_{p \in P_1}\sum_{i=v_p(d)}^{\infty} \frac{ \sum_{j=0}^{i}\chi(\pi^j \bar \pi ^{i-j})}{p^{is}}\prod_{p \in P_0}\sum_{i=\lceil v_p(d)/2 \rceil}^{\infty} \frac{\chi(p^{2i})}{p^{2is}}\prod_{p \in P_{ram}}\sum_{i=v_p(d)}^{\infty} \frac{\chi(\pi^i)}{p^{is}},$$ where $\pi, \bar \pi$ are primes lying above $p$.

There are two remarks on this function. One remark is that $L_1(\chi,s)$ is just the Hecke L-function $L(\chi,s)$. The second remark on this function is that there is a relationship between $L_1(\chi,s)$ and $L_d(\chi,s)$ given by \begin{align*}
    L_d(\chi,s) &= L_1(\chi,s) \prod_{p \in P_1}\frac{\sum_{i=v_p(d)}^{\infty} \frac{ \sum_{j=0}^{i}\chi(\pi^j \bar \pi ^{i-j})}{p^{is}}}{\sum_{i=0}^{\infty} \frac{ \sum_{j=0}^{i}\chi(\pi^j \bar \pi ^{i-j})}{p^{is}}}\prod_{p \in P_0}\frac{\sum_{i=\lceil v_p(d)/2 \rceil}^{\infty} \frac{\chi(p^{2i})}{p^{2is}}}{\sum_{i=0}^{\infty} \frac{\chi(p^{2i})}{p^{2is}}}\prod_{p \in P_{ram}}\frac{\sum_{i=v_p(d)}^{\infty} \frac{\chi(\pi^i)}{p^{is}}}{\sum_{i=0}^{\infty} \frac{\chi(\pi^i)}{p^{is}}}\\
    &=L_1(\chi,s)Q_{d,1}(\chi,s)Q_{d,0}(\chi,s)Q_{d,ram}(\chi,s),
\end{align*} where $Q_{d,1}$, $Q_{d,0}$, and $Q_{d,ram}$ are holomorphic over $Re(s)>0$.

\begin{lemma}
$L_{x',d}(s)$ is meromorphic over $Re(s)>0$, and only has a pole at $s=1$ of residue $\frac{1}{|\Cl(\bbQ[\sqrt{-x'}])|}\Res_{s=1}L_d(\chi_{\triv},s)$.
\label{lem:Lxds_meromorphic}
\end{lemma}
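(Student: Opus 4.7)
The plan is to reduce the claim for $L_{x',d}(s)$ to the corresponding claim for each $L_d(\chi,s)$, since by definition $L_{x',d}(s) = \frac{1}{|\Cl(\bbQ[\sqrt{-x'}])|}\sum_{\chi} L_d(\chi,s)$ is a finite average. Thus it suffices to show that each $L_d(\chi,s)$ extends to a meromorphic function on $\Re(s) > 0$, holomorphic away from $s=1$, with at worst a simple pole at $s=1$, and having a genuine simple pole exactly when $\chi = \chi_{\triv}$.

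I would exploit the factorization stated in the excerpt, namely
\[
L_d(\chi,s) = L_1(\chi,s) \cdot Q_{d,1}(\chi,s) \cdot Q_{d,0}(\chi,s) \cdot Q_{d,ram}(\chi,s).
\]
The first factor $L_1(\chi,s) = L(\chi,s)$ is a Hecke $L$-function over $\bbQ[\sqrt{-x'}]$, and by standard theory it extends to a meromorphic function on all of $\bbC$ which is entire when $\chi$ is non-trivial and has a simple pole only at $s=1$ when $\chi = \chi_{\triv}$. So it remains to argue that each of $Q_{d,1}, Q_{d,0}, Q_{d,ram}$ is holomorphic (and in particular non-vanishing would be a bonus, but is not needed for this lemma) on $\Re(s) > 0$.

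The key observation is that each $Q$ is a \emph{finite} product over primes $p \mid d$, since $v_p(d) = 0$ for all other primes makes the ratio equal to $1$. For $Q_{d,0}$ at an inert prime, the ratio simplifies to the monomial $\chi(\mathfrak{p})^{2\lceil v_p(d)/2\rceil}/p^{2\lceil v_p(d)/2\rceil s}$, which is entire. For $Q_{d,1}$ at a split prime, the denominator $\sum_{i=0}^{\infty} \sum_{j=0}^{i}\chi(\pi^j\bar\pi^{i-j}) p^{-is}$ equals $(1-\chi(\pi)p^{-s})^{-1}(1-\chi(\bar\pi)p^{-s})^{-1}$, which is non-vanishing on $\Re(s) > 0$ because $|\chi(\pi)p^{-s}| = p^{-\Re(s)} < 1$, and the numerator is a Dirichlet series whose coefficients grow at most linearly and so converges absolutely on $\Re(s) > 0$. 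The ramified case $Q_{d,ram}$ is analogous and simpler. Hence each $Q$ is holomorphic on $\Re(s) > 0$.

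Combining these pieces, $L_d(\chi,s)$ is meromorphic on $\Re(s) > 0$, is holomorphic there when $\chi \neq \chi_{\triv}$, and has at worst a simple pole at $s=1$ when $\chi = \chi_{\triv}$. Averaging over $\chi \in \widehat{\Cl(\bbQ[\sqrt{-x'}])}$ then yields the stated meromorphic continuation of $L_{x',d}(s)$ with a pole only at $s=1$ of residue $\frac{1}{|\Cl(\bbQ[\sqrt{-x'}])|}\Res_{s=1} L_d(\chi_{\triv},s)$. The main (minor) technical point is the verification that the denominators of the $Q$ factors do not vanish on $\Re(s) > 0$; everything else is a formal manipulation of Euler products together with the known analytic properties of Hecke $L$-functions.
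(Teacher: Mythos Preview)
Your proposal is correct and follows essentially the same approach as the paper: reduce to each $L_d(\chi,s)$ via the finite average, use the factorization $L_d(\chi,s)=L_1(\chi,s)Q_{d,1}Q_{d,0}Q_{d,ram}$, invoke the analytic properties of the Hecke $L$-function $L_1(\chi,s)$, and observe that the $Q$-factors are holomorphic on $\Re(s)>0$. The paper's proof is in fact terser than yours---it simply asserts that each $Q$ factor is holomorphic there---whereas you supply the verification (finite product over $p\mid d$, non-vanishing of denominators) that the paper leaves implicit.
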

\begin{proof}
$L_{x',d}(s)$ is an average of $L_{d}(\chi,s)$ as $\chi$ varies over $\widehat{\Cl(\bbQ[\sqrt{-x}])}$. Now $L_{d}(\chi,s)=L_1(\chi,s)Q_{d,1}(\chi,s)Q_{d,0}(\chi,s)Q_{d,ram}(\chi,s)$, with each factor holomorphic over $Re(s)>0$, except possibly $L_1(\chi,s)$ when $\chi=\chi_{\triv}$, in which case there is a pole at $s=1$. Thus, the residue of $L_{x',d}(s)$ at $s=1$ is $\frac{1}{|\Cl(\bbQ[\sqrt{-x'}])|}\Res_{s=1}L_d(\chi_{\triv},s)$.
\end{proof}

\begin{lemma}
\label{lem:Lxds_residue}
$$\Res_{s=1}L_{x',d}(s) = O\left( \frac{1}{\sqrt{x'}} \prod_{p \in P_1} \frac{v_p(d)+1}{p^{v_p(d)}} \prod_{p \in P_0}\frac{1}{p^{2 \lceil v_p(d)/2 \rceil}} \prod_{p \in P_{ram}} \frac{1}{p^{v_p(d)}}\right).$$
\end{lemma}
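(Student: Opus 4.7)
The plan is to combine the factorization already written down in the text with the class number formula and a direct geometric-series computation of each local ratio at $s = 1$.

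By Lemma \ref{lem:Lxds_meromorphic}, it suffices to bound $\frac{1}{|\Cl(\bbQ[\sqrt{-x'}])|} \Res_{s=1} L_d(\chi_{\triv},s)$. Write $K = \bbQ[\sqrt{-x'}]$ for short. The first observation is that $L_1(\chi_{\triv},s)$ is exactly the Dedekind zeta function $\zeta_K(s)$: at a split prime the local factor $\sum_{i\ge 0}(i+1)p^{-is}$ counts the $i+1$ ideals $\pi^j\bar\pi^{i-j}$ of norm $p^i$, at an inert prime $\sum_{i\ge 0} p^{-2is}$ counts the unique ideal $(p^i)$ of norm $p^{2i}$, and at a ramified prime $\sum_{i\ge 0} p^{-is}$ counts $\pi^i$. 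Using the factorization
\[
L_d(\chi_{\triv},s) = \zeta_K(s)\cdot Q_{d,1}(\chi_{\triv},s)\,Q_{d,0}(\chi_{\triv},s)\,Q_{d,ram}(\chi_{\triv},s)
\]
from the paragraph preceding the lemma, and the class number formula for imaginary quadratic fields giving $\Res_{s=1}\zeta_K(s) = O(|\Cl(K)|/\sqrt{x'})$, we reduce to showing that each of the three $Q$-factors at $s = 1$ contributes the corresponding product stated in the lemma.

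The second step is to evaluate each local ratio at $s = 1$. For a split prime $p$ with $k := v_p(d)$, reindexing $j = i - k$ gives
\[
\sum_{i=k}^{\infty} \frac{i+1}{p^i} = \frac{1}{p^k}\sum_{j=0}^{\infty} \frac{j+k+1}{p^j} = \frac{1}{p^k}\left(\frac{p^2}{(p-1)^2} + \frac{kp}{p-1}\right),
\]
and dividing by $\sum_{i \ge 0}(i+1)p^{-i} = p^2/(p-1)^2$ yields $\frac{1}{p^k}\bigl(1 + \frac{k(p-1)}{p}\bigr) \le \frac{k+1}{p^k}$. This gives the $P_1$ product. For an inert prime the ratio is $\sum_{i\ge \lceil k/2\rceil} p^{-2i}\Big/\sum_{i\ge 0}p^{-2i} = p^{-2\lceil k/2\rceil}$, giving the $P_0$ product. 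For a ramified prime the ratio is $\sum_{i\ge k}p^{-i}\Big/\sum_{i\ge 0}p^{-i} = p^{-k}$, giving the $P_{ram}$ product. Multiplying these together with the bound on $\Res_{s=1}\zeta_K(s)$ and dividing by $|\Cl(K)|$ produces the asserted bound.

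I do not anticipate a serious obstacle: the only thing requiring mild care is justifying that $Q_{d,1}(\chi_{\triv},s)$, $Q_{d,0}(\chi_{\triv},s)$, $Q_{d,ram}(\chi_{\triv},s)$ are continuous (in fact holomorphic) at $s = 1$ so that the residue of the product is simply $Q_{d,1}(\chi_{\triv},1)Q_{d,0}(\chi_{\triv},1)Q_{d,ram}(\chi_{\triv},1)\cdot \Res_{s=1}\zeta_K(s)$; this is already noted in the excerpt. Everything else is the geometric-series computation above.
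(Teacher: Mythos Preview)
Your proposal is correct and follows essentially the same approach as the paper: reduce via Lemma \ref{lem:Lxds_meromorphic} to $L_d(\chi_{\triv},s)$, identify $L_1(\chi_{\triv},s)$ with $\zeta_K(s)$ and apply the class number formula, then bound each local ratio $Q_{d,\bullet}(\chi_{\triv},1)$ by the same reindexing/geometric-series computation. Your split-prime estimate is in fact slightly sharper (you compute the exact value $p^{-k}(1+k(p-1)/p)$ before bounding by $(k+1)p^{-k}$), but otherwise the arguments coincide.
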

\begin{proof}
Since we're looking at the residue at $1$, and by Lemma \ref{lem:Lxds_meromorphic}, we just need to look at $L_d(\chi_{\triv},s)$. We have $$L_d(\chi_{\triv},s) =L_1(\chi_{\triv},s)Q_{d,1}(\chi_{\triv},s)Q_{d,0}(\chi_{\triv},s)Q_{d,ram}(\chi_{\triv},s).$$ Since all factors besides $L_1(\chi_{\triv},s)$ are entire, we focus on the residue of $L_1(\chi_{\triv},s)$ at $1$. However, this is just the Dedekind zeta function of $\bbQ[\sqrt{-x'}]$. Thus, by the class number formula, $L_1(\chi_{\triv},s)$ has residue at $1$ bounded by $C_0 \frac{|\Cl(\bbQ[\sqrt{-x'}])|}{\sqrt{x'}}$, where $C_0$ is a constant independent of $x'$. Thus, $L_d(\chi_{\triv},1)$ has residue bounded by $$C_0 \frac{|\Cl(\bbQ[\sqrt{-x'}])|}{\sqrt{x'}}Q_{d,1}(\chi_{\triv},1)Q_{d,0}(\chi_{\triv},1)Q_{d,ram}(\chi_{\triv},1).$$ Thus, $L_{d,x'}(s)$ has residue bounded by $$C_0 \frac{Q_{d,1}(\chi_{\triv},1)Q_{d,0}(\chi_{\triv},1)Q_{d,ram}(\chi_{\triv},1)}{\sqrt{x'}}.$$ 

Now, we investigate $Q_{d,1}(\chi_{\triv},1)$. A local factor of $Q_{d,1}(\chi_{\triv},1)$ is of the form $\frac{\sum_{i=v_p(d)}^{\infty} \frac{ i+1}{p^{i}}}{\sum_{i=0}^{\infty} \frac{i+1}{p^{i}}}$, and we have the bound $$\frac{\sum_{i=v_p(d)}^{\infty} \frac{ i+1}{p^{i}}}{\sum_{i=0}^{\infty} \frac{i+1}{p^{i}}}=p^{-v_p(d)}\frac{\sum_{i=0}^{\infty} \frac{v_p(d)+i+1}{p^{i}}}{\sum_{i=0}^{\infty} \frac{i+1}{p^{i}}}=(v_p(d)+1)p^{-v_p(d)}\frac{\sum_{i=0}^{\infty} \frac{1+i/(v_p(d)+1)}{p^{i}}}{\sum_{i=0}^{\infty} \frac{i+1}{p^{i}}}\leq (v_p(d)+1)p^{-v_p(d)}.$$ Similar analyses of $Q_{d,0}(\chi_{\triv},1)$ and $Q_{d,ram}(\chi_{\triv},1)$ yields that $$Q_{d,1}(\chi_{\triv},1)Q_{d,0}(\chi_{\triv},1)Q_{d,ram}(\chi_{\triv},1) \leq \prod_{p \in P_1} \frac{v_p(d)+1}{p^{v_p(d)}} \prod_{p \in P_0}\frac{1}{p^{2 \lceil v_p(d)/2 \rceil}} \prod_{p \in P_{ram}} \frac{1}{p^{v_p(d)}}.$$
\end{proof}

\begin{lemma}
Assuming GRH, for all $\epsilon_1>0$, $$L_{x',d}(\chi,1/2+it)=O_{\epsilon_1}(|x't|^{\epsilon_1}d^{-1/2}\prod_{p|d} \frac{v_p(d)+1}{(1-p^{-1/2})^{4}}).$$
\label{lem:Lxds_subconvexity}
\end{lemma}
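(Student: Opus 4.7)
The plan is to reduce to a bound on the base Hecke $L$-function $L_1(\chi,s)$ and a per-prime analysis of the correction factors, using the factorization
\[
    L_d(\chi,s) = L_1(\chi,s)\, Q_{d,1}(\chi,s)\, Q_{d,0}(\chi,s)\, Q_{d,ram}(\chi,s)
\]
already established in the paper. (I read the statement as really being about $L_d(\chi,s)$, since $L_{x',d}$ as defined is the average over $\chi$; the desired bound then passes to the average.) First I would bound $L_1(\chi,1/2+it)$: it is a Hecke $L$-function on $\bbQ[\sqrt{-x'}]$ of analytic conductor $O(x'(|t|+1))$, so under GRH the standard Lindel\"of-on-the-critical-line consequence yields $|L_1(\chi,1/2+it)| = O_{\epsilon_1}(|x't|^{\epsilon_1})$. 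This plays the same role that GRH combined with Phragm\'en--Lindel\"of played in the proof of Proposition \ref{prop:estimate_on_sum_of_F(n)Gx(n)}, but applied at the critical line itself rather than on $\Re(s) = 1/2 + \epsilon$.

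Next I would estimate each local factor of the three $Q$-products at $s = 1/2+it$. For $p \nmid d$ the local factor is trivially $1$, so only $p \mid d$ contributes. For a split prime $p \in P_1$ with $v := v_p(d) \geq 1$, the local factor is $\left(\sum_{i \geq v} a_i p^{-is}\right)\big/\left(\sum_{i \geq 0} a_i p^{-is}\right)$ with $a_i = \sum_{j=0}^i \chi(\pi)^j \chi(\bar\pi)^{i-j}$. Using $|a_i| \leq i+1$ and a geometric-series computation (after pulling out $p^{-v/2}$ and reindexing), the numerator has modulus at most $(v+1) p^{-v/2}(1-p^{-1/2})^{-2}$. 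The denominator is exactly the local Euler factor of $L_1(\chi,s)$ at $p$, namely $1/((1-\chi(\pi)p^{-s})(1-\chi(\bar\pi)p^{-s}))$, whose modulus at $s=1/2+it$ is at least $(1+p^{-1/2})^{-2}$. Taking the ratio and invoking $(1+p^{-1/2})(1-p^{-1/2}) = 1-p^{-1} \leq 1$ (so $(1+p^{-1/2})^{2} \leq (1-p^{-1/2})^{-2}$) produces the local bound $(v+1) p^{-v/2}(1-p^{-1/2})^{-4}$.

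The inert and ramified contributions are cleaner. For $p \in P_0$ the local factor telescopes to $\chi(p^2)^{\lceil v/2 \rceil} p^{-2 \lceil v/2 \rceil s}$, of modulus $p^{-\lceil v/2 \rceil} \leq p^{-v/2}$; for $p \in P_{ram}$ it is $\chi(\pi)^{v} p^{-vs}$, of modulus $p^{-v/2}$. Both are dominated by the split-prime bound. Multiplying over $p \mid d$ yields $\prod_{p\mid d} (v_p(d)+1)\, p^{-v_p(d)/2}(1-p^{-1/2})^{-4} = d^{-1/2} \prod_{p\mid d} (v_p(d)+1)(1-p^{-1/2})^{-4}$, which combined with the GRH bound on $L_1(\chi,1/2+it)$ gives the claim for $L_d(\chi,\cdot)$; averaging over $\chi \in \widehat{\Cl(\bbQ[\sqrt{-x'}])}$ carries it to $L_{x',d}$.

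The main obstacle is simply getting the split-prime local estimate to come out with exactly the prescribed shape $(v+1)\,p^{-v/2}(1-p^{-1/2})^{-4}$ uniformly in $p$; the linear growth of $|a_i|$ in $i$ forces some care in the geometric-series tail, and the factor $(1-p^{-1/2})^{-4}$ is tight precisely because one loses $(1-p^{-1/2})^{-2}$ from the numerator estimate and another factor from bounding the inverse Euler factor. Everything else is either a standard GRH consequence or a clean telescoping identity.
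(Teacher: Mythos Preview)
Your argument is correct and follows essentially the same route as the paper: reduce to $L_d(\chi,s)$ via averaging, factor as $L_1(\chi,s)\prod Q$, invoke GRH for the Lindel\"of bound on $L_1(\chi,1/2+it)$, and estimate the split-prime local factors by bounding numerator and denominator separately to get $(v+1)p^{-v/2}(1-p^{-1/2})^{-4}$. Your observation that the inert and ramified local ratios telescope \emph{exactly} to $\chi(p^2)^{\lceil v/2\rceil}p^{-2\lceil v/2\rceil s}$ and $\chi(\pi)^v p^{-vs}$ (as ratios of geometric series with the same common ratio) is slightly cleaner than the paper's version, which bounds numerator and denominator separately for those cases too; otherwise the two proofs are identical in structure and content.
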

\begin{proof}
Since $L_{x',d}$ is an average of $L_d(\chi,s)$, we just need to show the same bound for $L_d(\chi,s)$. $$L_{d}(\chi,s)=L_{1}(\chi,s)Q_{d,1}(\chi,s)Q_{d,0}(\chi,s)Q_{d,ram}(\chi,s).$$ We will bound $L_{1}(\chi,1/2+it)$ and $Q_{d,1}(\chi,1/2+it)Q_{d,0}(\chi,1/2+it)Q_{d,ram}(\chi,1/2+it)$ separately.

The analytic conductor of $L_1(\chi,1/2+it)$ is $x'$. Since we are assuming GRH, we may use \cite[Cor. 5.20]{iwaniec_kowalski_2004} to obtain $L_1(\chi,1/2+it)=O_{\epsilon_1}(|x't|^{\epsilon_1})$.

For $Q_{d,1}(\chi,1/2+it)Q_{d,0}(\chi,1/2+it)Q_{d,ram}(\chi,1/2+it)$, we will bound as follows. Consider one local factor from $Q_{d,1}$. For example, $$\frac{\sum_{j=v_p(d)}^{\infty} \frac{\sum_{l=0}^j\chi(\pi^l \bar \pi^{j-l})}{p^{(1/2+it)j}}}{\sum_{j=0}^{\infty} \frac{\sum_{l=0}^j\chi(\pi^l \bar \pi^{j-l})}{p^{(1/2+it)j}}}.$$ We may upper bound the numerator by $$\sum_{j=v_p(d)}^{\infty} \frac{j+1}{p^{j/2}}=\frac{1}{p^{v_p(d)/2}}\sum_{j=0}^{\infty} \frac{j+1+v_p(d)}{p^{j/2}}=\frac{1}{p^{v_p(d)/2}}(\frac{1}{(1-p^{-1/2})^2}+\frac{v_p(d)}{1-p^{-1/2}}).$$ As for the denominator, we may write it as $$(1-\chi(\pi)p^{-1/2+it})^{-1}(1-\overline{\chi(\pi)}p^{-1/2+it})^{-1},$$ with the individual products lower bounded by $(1+p^{-1/2})^{-1}$, and so we obtain a upper bound of $$\frac{1}{p^{v_p(d)/2}}(1+p^{-1/2})^2((1-p^{-1/2})^{-2}+v_p(d)(1-p^{-1/2})^{-1}) \leq \frac{v_p(d)+1}{p^{v_p(d)/2}(1-p^{-1/2})^4}$$ for the local factor at $p$. So, $Q_{d,1}(\chi,1/2+it)$ may be bounded by $$\prod_{p|d, p \in P_1}\frac{v_p(d)+1}{p^{v_p(d)/2}(1-p^{-1/2})^4}.$$ We may apply a similar argument to $Q_{d,0}$ and upper bound $Q_{d,0}(\chi,1/2)$ by $$\prod_{p|d, p \in P_0}\frac{1}{p^{2\lceil v_p(d)/2 \rceil}} \frac{(1-p^{-1})^{-1}}{(1+p^{-1})^{-1}} \leq \prod_{p|d, p \in P_0}\frac{1}{p^{\lceil v_p(d)/2 \rceil}} (1-p^{-1})^{-2} \leq \prod_{p|d, p \in P_0}\frac{v_p(d)+1}{p^{v_p(d)/2}(1-p^{-1/2})^4}.$$ As for $Q_{d,ram}$, we obtain an upper bound of $$\prod_{p|d, p \in P_{ram}}\frac{1}{p^{v_p(d)/2}} (1-p^{-1/2})^{-2} \leq \prod_{p|d, p \in P_{ram}}\frac{v_p(d)+1}{p^{v_p(d)/2}(1-p^{-1/2})^4}.$$  Thus, we obtain that $$|Q_{d,1}(\chi,1/2+it)Q_{d,0}(\chi,1/2+it)Q_{d,ram}(\chi,1/2+it)| \leq d^{-1/2}\prod_{p|d} \frac{v_p(d)+1}{(1-p^{-1/2})^{4}}$$

\end{proof}

We now have all the ingredients for the Wiener-Ikehara Tauberian Theorem.

\begin{prop}
Assuming GRH, for all $\epsilon_1>0$, $$\sum_{d=1}^X\sum_{n=1}^X G_{x'}(dn)=O_{\epsilon_1}\left(\frac{|\Cl(\bbQ[\sqrt{-x'}])|}{x'\prod_{p|x'}(1-1/p)}X^2+x'^{\epsilon_1}X^{3/2+\epsilon_1}\right).$$
\label{prop:temp1}
\end{prop}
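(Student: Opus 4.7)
The strategy is to apply the Wiener--Ikehara Tauberian theorem (Theorem~\ref{thm:appendix_wiener_ikehara_with_error}) to the Dirichlet series $L_{x',d}(s)$ for each fixed $d$, and then sum the resulting estimates over $1 \leq d \leq X$. The three hypotheses of the theorem are exactly Lemmas~\ref{lem:Lxds_meromorphic}, \ref{lem:Lxds_residue}, and~\ref{lem:Lxds_subconvexity}. Since the coefficient $a_{x',d,n}$ of $L_{x',d}(s)$ equals $G_{x'}(n)$ when $d \mid n$ and vanishes otherwise, substituting $n = dm$ identifies the coefficient-sum cutoff at $n \leq dX$ with $\sum_{m \leq X} G_{x'}(dm)$, so the Tauberian theorem yields
\begin{equation*}
\sum_{m=1}^X G_{x'}(dm) \;=\; dX \cdot \Res_{s=1} L_{x',d}(s) \;+\; O_{\epsilon_1}\!\bigl(x'^{\epsilon_1} d^{\epsilon_1} X^{1/2+\epsilon_1}\bigr),
\end{equation*}
where I absorb the factor $\prod_{p|d}(v_p(d)+1)(1-p^{-1/2})^{-4}$ from Lemma~\ref{lem:Lxds_subconvexity} into $d^{\epsilon_1}$ (using $\tau(d) = O_{\epsilon_1}(d^{\epsilon_1})$ and $\omega(d) = O(\log d/\log\log d)$). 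Summing over $1 \leq d \leq X$ produces an error of the shape $O_{\epsilon_1}(x'^{\epsilon_1} X^{3/2+O(\epsilon_1)})$, matching the claimed error after renaming $\epsilon_1$.

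The main term requires estimating $X \sum_{d \leq X} d \cdot \Res_{s=1} L_{x',d}(s)$. By Lemma~\ref{lem:Lxds_residue}, this reduces to bounding $X x'^{-1/2} \sum_{d \leq X} f(d)$ for the multiplicative function
\begin{equation*}
f(d) \;:=\; d \prod_{p \in P_1} \tfrac{v_p(d)+1}{p^{v_p(d)}} \prod_{p \in P_0} \tfrac{1}{p^{2\lceil v_p(d)/2\rceil}} \prod_{p \in P_{ram}} \tfrac{1}{p^{v_p(d)}}.
\end{equation*}
A prime-by-prime comparison of Euler factors shows that, with $K := \bbQ[\sqrt{-x'}]$,
\begin{equation*}
F(s) \;:=\; \sum_{d \geq 1} \frac{f(d)}{d^s} \;=\; \zeta_K(s) \prod_{p \in P_0} (1 + p^{-1-s}),
\end{equation*}
so $F$ agrees with the Dedekind zeta of $K$ up to a correction that is holomorphic and $O(1)$ on $\Re(s) > 0$. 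In particular the split and ramified primes contribute exactly their $\zeta_K$ local factor, while at inert primes $f(p^k)$ oscillates between $1$ and $1/p$ and produces the extra $(1+p^{-1-s})$.

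A standard partial-summation or Perron-type argument applied to $F(s)$ (using GRH on $\zeta_K(s)$ to get the subconvex bound on the critical line, and the class number formula $\Res_{s=1}\zeta_K = O(|\Cl(K)|/\sqrt{x'})$) then gives
\begin{equation*}
\sum_{d \leq X} f(d) \;=\; O\!\bigl(|\Cl(K)| X/\sqrt{x'}\bigr) + O_\epsilon(x'^{\epsilon} X^{1/2+\epsilon}).
\end{equation*}
Multiplying by $X/\sqrt{x'}$ gives a main term of size $O(|\Cl(K)|X^2/x')$, which is majorized by $|\Cl(K)| X^2 /(x' \prod_{p|x'}(1-p^{-1}))$ since the product is at most $1$, while the secondary error is absorbed into $O_{\epsilon_1}(x'^{\epsilon_1} X^{3/2+\epsilon_1})$. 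The main obstacle is the multiplicative bookkeeping: verifying the Euler-product identity for $F(s)$ case by case at split, inert, and ramified primes, and confirming that all lower-order errors at both the inner ($L_{x',d}$) and outer ($F$) Tauberian applications fit uniformly in $x'$ into the claimed shape; both steps are standard but require care to track the $x'$-dependence.
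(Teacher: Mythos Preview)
Your proposal is correct and follows essentially the same route as the paper: apply the Tauberian theorem (Theorem~\ref{thm:appendix_wiener_ikehara_with_error}) to $L_{x',d}(s)$ via Lemmas~\ref{lem:Lxds_meromorphic}--\ref{lem:Lxds_subconvexity}, then sum over $d$ by recognizing the Dirichlet series of the resulting multiplicative function as $\zeta_K(s)$ times a harmless Euler product. Your handling is in two places slightly cleaner than the paper's: you absorb $\prod_{p|d}(v_p(d)+1)(1-p^{-1/2})^{-4}$ directly into $d^{\epsilon_1}$ rather than running a second Tauberian on the error series, and your identity $F(s)=\zeta_K(s)\prod_{p\in P_0}(1+p^{-1-s})$ is sharper than the paper's informal comparison. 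The only point to make explicit is that $\prod_{p\in P_0}(1+p^{-1-s})$ is bounded uniformly in $x'$ on $\Re(s)\geq 1/2$ (since it is dominated by the convergent $\prod_p(1+p^{-3/2})$), so the $x'$-dependence in the outer error term really does come only from the analytic conductor of $\zeta_K$.
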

\begin{proof}
Using the conclusions of Lemmas \ref{lem:Lxds_meromorphic}, \ref{lem:Lxds_residue}, and \ref{lem:Lxds_subconvexity}, we may apply Theorem \ref{thm:appendix_wiener_ikehara_with_error} in the appendix, with $b=1$, $a=1$, $\kappa=1$, and $r=d^{-1/2}\prod_{p|d} \frac{v_p(d)+1}{(1-p^{-1/2})^{4}}$. We then obtain $$\sum_{n=1}^X G_{x'}(dn)=O_{\epsilon_1}(\frac{Xd}{\sqrt{x'}}\prod_{p \in P_1} \frac{v_p(d)+1}{p^{v_p(d)}} \prod_{p \in P_0}\frac{1}{p^{2 \lceil v_p(d)/2 \rceil}} \prod_{p \in P_{ram}} \frac{1}{p^{v_p(d)}}+x'^{\epsilon_1}(Xd)^{1/2+\epsilon_1}d^{-1/2}\prod_{p|d} \frac{v_p(d)+1}{(1-p^{-1/2})^{4}}).$$ 

First we deal with the first term.

$$\sum_{d=1}^X \frac{Xd}{\sqrt{x'}}\prod_{p \in P_1} \frac{v_p(d)+1}{p^{v_p(d)}} \prod_{p \in P_0}\frac{1}{p^{2 \lceil v_p(d)/2 \rceil}}\prod_{p \in P_{ram}} \frac{1}{p^{v_p(d)}}= \frac{X}{\sqrt{x'}}\sum_{d=1}^X\prod_{p \in P_1} (v_p(d)+1) \prod_{p \in P_0}\frac{1}{p^{2 \lceil v_p(d)/2 \rceil-v_p(d)}}.$$

To estimate the sum, we may again use the Tauberian theorem (Theorem \ref{thm:appendix_wiener_ikehara_with_error}) associated to the $L$-function given by the Euler product $$\prod_{p \in P_1}(1+\frac{2}{p^s}+\frac{3}{p^{2s}}+\dots) \prod_{p \in P_0} (1+\frac{2}{p^{2s}}+\frac{2}{p^{4s}}+\dots).$$ But this is almost exactly the Dedekind Zeta Function of $\bbQ[\sqrt{-x'}]$. Indeed, the Dedekind Zeta Function of $\bbQ[\sqrt{-x'}]$ is $$\prod_{p \in P_1}(1+\frac{2}{p^s}+\frac{3}{p^{2s}}+\dots) \prod_{p \in P_0} (1+\frac{1}{p^{2s}}+\frac{1}{p^{4s}}+\dots)\prod_{p \in P_{ram}}\left(1+\frac{1}{p^s}+\frac{1}{p^{2s}}+\dots\right).$$ So, by the class number formula and the Tauberian theorem, $$\sum_{d=1}^X\prod_{p \in P_1} (v_p(d)+1)\prod_{p \in P_0}\frac{1}{p^{2 \lceil v_p(d)/2 \rceil-v_p(d)}} = O_{\epsilon_1}\left(\frac{|\Cl(\bbQ[\sqrt{-x'}])|}{\sqrt{x'}\prod_{p|x'}(1-1/p)}X+x'^{\epsilon_1}X^{1/2+\epsilon_1}\right).$$ Thus, our final estimate for the first term is $$O_{\epsilon_1}\left(\frac{|\Cl(\bbQ[\sqrt{-x'}])|}{x'\prod_{p|x'}(1-1/p)}X^2+X^{3/2+\epsilon_1}x'^{\epsilon_1}\right)$$

Now we deal with the second term, \begin{align*}
    \sum_{d=1}^Xx'^{\epsilon_1}(Xd)^{1/2+\epsilon_1}d^{-1/2}\prod_{p|d} \frac{v_p(d)+1}{(1-p^{-1/2})^{4}} &= x'^{\epsilon_1}X^{1/2+\epsilon_1}\sum_{d=1}^X d^{\epsilon_1}\prod_{p|d} \frac{v_p(d)+1}{(1-p^{-1/2})^{4}}\\ &\leq x'^{\epsilon_1}X^{1/2+2\epsilon_1} \sum_{d=1}^X\prod_{p|d} \frac{v_p(d)+1}{(1-p^{-1/2})^{4}}.
\end{align*} For the summation, we may again use the Tauberian theorem on the $L$-function associated to the Euler product $$\prod_{p}\left(1+\frac{2(1-p^{-1/2})^{-4}}{p^s}+\frac{3(1-p^{-1/2})^{-4}}{p^{2s}}+\dots\right)$$ Given that $(1-2^{-1/2})^{-4} \leq 136$, the coefficients of this $L$-function is certainly bounded by the coefficients of the $L$-function associated to the product $$\prod_{p}\left(1+\frac{2\cdot 136}{p^s}+\frac{3\cdot 136}{p^{2s}}+\dots\right).$$ The Tauberian theorem (Theorem \ref{thm:appendix_wiener_ikehara_with_error}) with $b=2 \cdot 136=272$ then gives us something on the order of $X (\log X)^{272}=O(X^{1+\epsilon_1})$. So, the final estimate of the second term is $O_{\epsilon_1}(x'^{\epsilon_1}X^{3/2+3\epsilon_1})$.

Consequently, $$\sum_{d=1}^X\sum_{n=1}^XG_{x'}(dn)=O_{\epsilon_1}\left(\frac{|\Cl(\bbQ[\sqrt{-x'}])|}{x'\prod_{p|x'}(1-1/p)}X^2+x'^{\epsilon_1}X^{3/2+\epsilon_1}\right).$$
\end{proof}

\begin{cor} Assuming GRH, for all $\epsilon_1>0$,
$$|S_x|=O_{\epsilon_1}\left(\frac{|\Cl(\bbQ[\sqrt{-x'}])|}{x'^{3/2}\prod_{p|x'}(1-1/p)}B+B^{3/4+\epsilon_1}x'^{-3/8+\epsilon_1}\right)$$
\end{cor}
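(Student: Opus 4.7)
The plan is to simply specialize Proposition \ref{prop:temp1} to the appropriate value of $X$ and then clean up the exponents. Recall from the discussion preceding Proposition \ref{prop:temp1} that we established the pointwise bound
$$|S_x| \leq |S_x'| = \sum_{d=1}^{(B/\sqrt{x'})^{1/2}} \sum_{n'=1}^{(B/\sqrt{x'})^{1/2}} G_{x'}(d n'),$$
where $x' = -x > 0$. This is exactly the double sum appearing on the left-hand side of Proposition \ref{prop:temp1}, with the parameter $X$ chosen to be $X := (B/\sqrt{x'})^{1/2}$.

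First I would substitute this value of $X$ into the main term of Proposition \ref{prop:temp1}. Since $X^2 = B/\sqrt{x'}$, the main term becomes
$$\frac{|\Cl(\bbQ[\sqrt{-x'}])|}{x'\prod_{p|x'}(1-1/p)} \cdot \frac{B}{\sqrt{x'}} = \frac{|\Cl(\bbQ[\sqrt{-x'}])|}{x'^{3/2}\prod_{p|x'}(1-1/p)} B,$$
which matches the first term of the corollary exactly.

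Next I would handle the error term. With $X = (B/\sqrt{x'})^{1/2}$, we have
$$x'^{\epsilon_1} X^{3/2+\epsilon_1} = x'^{\epsilon_1} \cdot B^{3/4 + \epsilon_1/2} x'^{-3/8 - \epsilon_1/4} = B^{3/4 + \epsilon_1/2} x'^{-3/8 + 3\epsilon_1/4}.$$
The only minor bit of bookkeeping is that the $\epsilon_1$ in the exponent of $B$ appears with a factor of $1/2$ and the $\epsilon_1$ in the exponent of $x'$ appears with a factor of $3/4$; after renaming $\epsilon_1$ (for instance, replacing it by $4\epsilon_1/3$ throughout), the error term takes the clean form $B^{3/4 + \epsilon_1} x'^{-3/8 + \epsilon_1}$ claimed in the corollary.

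There is no genuine obstacle here: all the analytic work, including the Tauberian input, the subconvexity bound on $L_d(\chi, s)$, and the residue calculation, is already packaged into Proposition \ref{prop:temp1}. The corollary is essentially a substitution of $X = (B/\sqrt{x'})^{1/2}$ followed by a renaming of the free parameter $\epsilon_1$.
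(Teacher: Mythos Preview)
Your proposal is correct and follows exactly the paper's approach: the paper's own proof is the single line ``Apply Proposition \ref{prop:temp1} to $X=(B/\sqrt{x'})^{1/2}$,'' and you have simply spelled out that substitution and the trivial $\epsilon_1$-bookkeeping.
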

\begin{proof}
Apply Proposition \ref{prop:temp1} to $X=(B/\sqrt{x'})^{1/2}$.
\end{proof}

Finally, we get our proposition
\begin{proof}[Proof of Proposition \ref{prop:upper_bound_small_x_negative}]
By the preceding corollary, we have $$\sum_{-B^{2/5-\epsilon}\leq x \leq -1, \text{squarefree}}|S_x|=O_{\epsilon_1}\left(\sum_{x'=1}^{B^{2/5-\epsilon}}\frac{|\Cl(\bbQ[\sqrt{-x'}])|}{x'^{3/2}\prod_{p|x'}(1-1/p)} B+\frac{B^{3/4+\epsilon_1}}{x'^{3/8+\epsilon_1}}\right).$$ Choosing $\epsilon_1 = \frac{25}{56}\epsilon$, we get $$\sum_{-B^{2/5-\epsilon}\leq x \leq -1, \text{squarefree}}|S_x|=O_{\epsilon}\left(B\sum_{x'=1, \text{ squarefree}}^{B^{2/5-\epsilon}}\frac{|\Cl(\bbQ[\sqrt{-x'}])|}{x'^{3/2}\prod_{p|x'}(1-1/p)}\right)+O_{\epsilon}(B).$$ Thus, we see that it suffices to show that $$\sum_{x'=1, \text{ squarefree}}^{B^{2/5-\epsilon}}\frac{|\Cl(\bbQ[\sqrt{-x'}])|}{x'^{3/2}\prod_{p|x'}(1-1/p)} = O(\log B).$$

Now it follows from the Class Number Formula and the fact that the Dedekind Zeta Function of a quadratic extension splits as a product of $L$-functions that $\frac{|\Cl(\bbQ[\sqrt{-x'}])|}{\sqrt{x'}} = O(L(\chi_{-x'},1))$, where we recall $\chi_{-x'}$ is defined to be the Jacobi Symbol $(\frac{-x'}{n})$. Thus, we may write $$\sum_{x'=1, \text{ squarefree}}^{B^{2/5-\epsilon}}\frac{|\Cl(\bbQ[\sqrt{-x'}])|}{x'^{3/2}\prod_{p|x'}(1-1/p)}=O\left(\sum_{x'=1}^{B^{2/5-\epsilon}}\frac{L(\chi_{-x'},1)|\mu(x')|}{x'\prod_{p|x'}(1-1/p)}\right).$$ Now, for $x'$ squarefree, $$\frac{1}{\prod_{p|x'}(1-1/p)}=\prod_{p|x'} (1+\frac{1}{p-1}) = \sum_{d|x'} \frac{1}{\phi(d)}.$$ So, \begin{align*}
    \sum_{x'=1}^{B^{2/5-\epsilon}}\frac{L(\chi_{-x'},1)|\mu(x')|}{x'\prod_{p|x'}(1-1/p)}&=\sum_{x'=1}^{B^{2/5-\epsilon}}\sum_{d|x'}\frac{L(\chi_{-x'},1)|\mu(x')|}{x'\phi(d)}\\
    &=O\left(\sum_{d=1}^{B^{2/5-\epsilon}} \sum_{x''=1}^{B^{2/5-\epsilon}/d}\frac{L(\chi_{-dx''},1)|\mu(dx'')|}{dx''\phi(d)}\right)\\
    &=O\left(\sum_{d=1}^{B^{2/5-\epsilon}} \frac{|\mu(d)|}{d\phi(d)}\sum_{x'=1, \gcd(d,x')=1}^{B^{2/5-\epsilon}/d}\frac{L(\chi_{-dx'},1)|\mu(x')|}{x'}\right).
\end{align*} Since $L(\chi_{-dx'},1)$ is always nonnegative, we have also that $$\sum_{x'=1, (d,x)=1}^{B^{2/5-\epsilon}/d}\frac{L(\chi_{-dx'},1)|\mu(x')|}{x'} \leq \sum_{x'=1}^{B^{2/5-\epsilon}/d}\frac{L(\chi_{-dx'},1)|\mu(x')|}{x'}.$$ By Lemma \ref{lem:ayoub_sum}, we have that $$\sum_{x'=1}^{B^{2/5-\epsilon}/d}\frac{L(\chi_{-dx'},1)|\mu(x')|}{x'} = O(\log(B^{2/5-\epsilon}))+O(d^{1/2}\log d).$$ So, $$\sum_{d=1}^{B^{2/5-\epsilon}} \frac{1}{d\phi(d)}\sum_{x'=1, \gcd(d,x')=1}^{B^{2/5-\epsilon}/d}\frac{L(\chi_{-dx'},1)|\mu(x')|}{x'} = \sum_{d=1}^{B^{2/5-\epsilon}} \frac{O(\log B+d^{1/2} \log d)}{d\phi(d)} = O(\log B)$$
\end{proof}

\section{Upper Bound of $|S_y|$ for $1 \leq y \leq B^{5/19-\epsilon}$} \label{sec:S_y bound}
\begin{figure}
    \centering
    \includegraphics[]{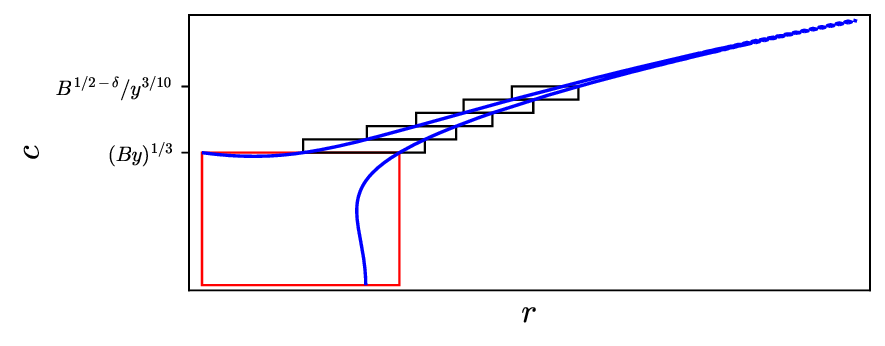}
    \caption{The graph of $(r+c^2)\sqrt{|r-c^2|}=8$ is in blue. Our goal is to obtain a weighted count of lattice points in certain congruence classes within regions of this shape. In the first case with $c\leq (By)^{1/3}$, we will obtain an upper bound through considering the lattice points within the red rectangle. In our second case with $(By)^{1/3}\leq c\leq B^{1/2-\delta}/y^{3/10}$, we will obtain an upper bound through considering lattice points in the black rectangles. Finally, for the last case where $B^{1/2-\delta}/y^{3/10}\leq c$, we will use a trivial estimate.}
    \label{fig:r_and_c}
\end{figure} 
Our goal this section is to prove Proposition \ref{prop:sum_of_Sy_for_y_big}. We first introduce a new set $S_y'$, into which $S_y$ will inject. First, note that $$S_{y}=\{(a,b,c):\max(|a|,|b|,|c|)^2 \sqrt{|\sqf(a^2+b^2-c^2)|}\leq B,\sqf(a^2+b^2-c^2) \notin \{0,1\}, \sq(a^2+b^2-c^2)=y\}$$ is in bijection with $$\{(a,b,c):\max(|a|,|b|,|c|)^2 \sqrt{|a^2+b^2-c^2|}\leq By, y=\sq(a^2+b^2-c^2), \sqf(a^2+b^2-c^2) \notin \{0,1\}\}.$$ Next, since we're looking for an upper bound, we may replace $\max(|a|,|b|,|c|)^2$ with $\frac{1}{3}(a^2+b^2+c^2)$. We may also replace the constraint $y=\sq(a^2+b^2-c^2)$ with the looser requirement that $y^2|a^2+b^2-c^2$ and $a^2+b^2-c^2 \neq 0$. Thus, $S_y$ injects into $$\{(a,b,c):(a^2+b^2+c^2)\sqrt{|a^2+b^2-c^2|}\leq 3By, y^2|a^2+b^2-c^2, a^2+b^2-c^2 \neq 0\}.$$ We 
define $$S_y':=\{(a,b,c):(a^2+b^2+c^2)\sqrt{|a^2+b^2-c^2|}\leq By, y^2|a^2+b^2-c^2, a^2+b^2-c^2 \neq 0\},$$ which is essentially the above set, except the $3$ is removed. We will proceed to show that $$\sum_{1 \leq y \leq B^{5/19-\epsilon}}|S_y'| = O_{\epsilon}(B \log B),$$ which will show that $$\sum_{1 \leq y \leq B^{5/19-\epsilon}}|S_y| = O_{\epsilon}((3B) \log (3B))=O_{\epsilon}(B \log B).$$ Let $r=a^2+b^2$, then let $$R_y=\{(r,c):(r+c^2)\sqrt{|r-c^2|}\leq By, y^2|r-c^2, r-c^2 \neq 0\}.$$ Then, we have $$|S_y'| = 4\sum_{(r,c) \in R_y} F(r),$$ where we recall $F(r)$ is the number of ways to write $r$ as a sum of two positive squares. So, we will bound this sum. Note that $c$ is at most $B^{1/2}$. So, we will write this sum as $$\sum_{c=1}^{B^{1/2}} \sum_{(r,c) \in R_y}F(r).$$ 

Now, having fixed $c$, the inner sum $\sum_{(r,c) \in R_y}F(r)$ is a sum over integers $r \equiv c^2 \mod y^2$ within a certain range. For example, if $c\leq (By)^{1/3}$, then the constraint $(r+c^2)\sqrt{|r-c^2|} \leq By$ is implied by $r \leq 2(By)^{2/3}$. The region of such $r,c$ is pictorially represented by the red box in Figure \ref{fig:r_and_c}. So, fixing a $c \leq (By)^{1/3}$, we would have that $$\sum_{(r,c) \in R_y}F(r) \leq \sum_{r\leq 2(By)^{2/3}, r \equiv c^2 \mod y^2} F(r).$$ By the squareroot law, one would guess that $$\sum_{r\leq (By)^{2/3}, r \equiv c^2 \mod y^2} F(r)=M(c,y)\frac{2(By)^{2/3}}{y^2}+E(c)$$ where $M(c,y)$ is some constant and $E(c)$ would be somewhere on the order of $\frac{\sqrt{2}(By)^{1/3}}{y}$. However, such error estimates are only conjectured \cite[Conj. 13.9]{montgomery_vaughan_2007} and it would turn out that applying the Tauberian theorem to the appropriate L-function would only give us $E=O_{\epsilon}((By)^{1/3+\epsilon})$, which turns out to not be good enough once we sum over $c$ and $y$. Despite this, we do have an averaged version of our conjectured error term. That is, we can obtain a good estimate of $\sum_{c=M}^{N} E(c)$, when $N-M$ is big. Thus, in the first two cases, we upper bound our sum with multiple rectangles as in Figure \ref{fig:r_and_c}.

Our strategy is as follows.
\begin{enumerate}
    \item Give an estimate of the sum $\sum_m\sum_{r \equiv m^2 \mod y^2}F(r)$ where $m,r$ vary in some large intervals as a main term plus an averaged error term. Note that the indices in this sum correspond to the lattice points in the red and black boxes in Figure \ref{fig:r_and_c}. In practice, we will place slightly more restrictions on $m$. 
    \item Apply the bounds to the first and second cases as in Figure \ref{fig:r_and_c}.
    \item Finally, we have a trivial estimate for the third case of Figure \ref{fig:r_and_c}, and conclude.
\end{enumerate}

We start with the first step, and attempt to obtain a good averaged error term. First we shall introduce some notations. Fix a residue class $c^2 \mod y^2$. Let $l=(c,y)$. Let $\widehat{(\bbZ/(y^2/l^2)\bbZ)^\times}$ be the characters of the multiplicative group $(\bbZ/(y^2/l^2)\bbZ)^\times$. We have that \begin{align*}
    \sum_{r=1, r \equiv c^2 \mod y^2}^{X} F(r)&=\sum_{r=1, r \equiv (c/l)^2 \mod (y/l)^2}^{X/l^2} F(rl^2)\\
    &=\frac{1}{\phi(y^2/l^2)}\sum_{\chi \in \widehat{(\bbZ/(y^2/l^2)\bbZ)^\times}}\chi((c/l)^2)\sum_{r=1}^{X/l^2} F(rl^2)\overline{\chi}(r).
\end{align*} Now, fix $\chi \in \widehat{(\bbZ/(y^2/l^2)\bbZ)^\times}$ and consider $$L^l(\chi,s)=\sum_{i=2v_2(l)}^{\infty} \frac{\chi(2^{i-2v_2(l)})}{2^{is}}\prod_{p \equiv 1 \mod 4} \sum_{i=2v_p(l)}^{\infty} \frac{(i+1)\chi(p^{i-2v_p(l)})}{p^{is}}\prod_{p \equiv 3 \mod 4} \sum_{i=v_p(l)}^{\infty} \frac{\chi(p^{2i-2v_p(l)})}{p^{2is}}.$$ Let $a^l_{\chi,n}$ be the $n$-th coefficient of this $L$-function. Then, $$a^l_{\chi,n} = \begin{cases}
F(n) \chi(n/l^2) & l^2 | n \\
0 & l^2 \nmid n
\end{cases}.$$ Thus, $\sum_{r=1}^{X/l^2} F(rl^2)\overline{\chi}(r)=\sum_{n=1}^{X}a^l_{\chi,n}$. 
Consider the Hecke character lift $\chi_{\adelic}$ of $\chi$ defined by $\chi_{\adelic} = \chi_{\infty} \prod_{\pi \in \Spec(\bbZ[i])} \chi_{\pi}$, where $\chi_{\pi}$'s and $\chi_{\infty}$ are defined as follows. Define $f_{\pi} \in \bbZ$ so that $y^2/l^2$ factorizes as $\prod_{\pi} \pi^{f_{\pi}}$ in $\bbZ[i]$. Then, given $x_{\pi} \in \bbZ[i]_{\pi}$, define $$\chi_{\pi}(x_{\pi}) = \begin{cases}
    \chi(N(\pi))^m & x_{\pi} \in \pi^m \bbZ[i]_{\pi}, \pi \nmid y^2/l^2 \\
    \chi(N(j)) & x_{\pi} \in \pi^k(j+\pi^{f_{\pi}}\bbZ[i]_{\pi}), j \in \bbZ[i], k \in \bbZ, \pi \nmid j, \pi | y^2/l^2.
\end{cases}$$ For, $\chi_{\infty}$, we define it to be $$\chi_{\infty}(x_{\infty}) = \begin{cases}
    1 & \chi(-1) = 1\\
    1 & \chi(-1) = -1, N(x_{\infty}) > 0 \\
    -1 & \chi(-1) = -1, N(x_{\infty}) < 0 \\
\end{cases}.$$


With this definition, we see that $L^l(\chi,s)$ is just a modification of the Hecke L-function $L_{\bbQ[i]}(\chi_{\adelic},s)$. We have $$L^l(\chi,s)=L_{\bbQ[i]}(\chi_{\adelic},s) l^{-2s}\prod_{p|l, p \equiv 1 \mod 4}Q_{\chi,l,p}(s),$$ where $$Q_{\chi,l,p}(s)=
\frac{\sum_{i=0}^{\infty} (2v_p(l)+1+i)\chi(p^{i})/p^{is}}{\sum_{i=0}^{\infty} (i+1)\chi(p^{i})/p^{is}}.$$ With this form of $L^l(\chi,s)$, we may obtain an estimate on the sum $\sum_{n=1}^X a^l_{\chi,n}$ via Perron's formula. Note that we cannot use the Tauberian theorem as this $L$-function does not have positive coefficients for nontrivial $\chi$.

Our step one is to prove the following averaged error. \begin{prop}
\label{prop:averaged_error} Assuming GRH, for all $\epsilon>0$, we have
$$\sum_{m=M,(m,(y/l))=1}^{N} \left \lvert\sum_{\substack{r=1\\ r \equiv m^2 \mod (y/l)^2}}^{X/l^2} F(rl^2)-\frac{X}{\phi((y/l)^2)}\Res_{s=1}L^l(\chi_{\triv},s)\right\lvert$$$$=\begin{cases}
O(\frac{1}{l}(N-M)^{1/2}y^{\epsilon}X^{1/2+\epsilon}) & N-M \leq (y/l)^2 \\
O(\frac{N-M}{y}y^{\epsilon}X^{1/2+\epsilon}) & \text{ otherwise}
\end{cases}$$
\end{prop}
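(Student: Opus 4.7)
The plan is to apply character orthogonality on $(\bbZ/(y/l)^2\bbZ)^\times$ to replace the congruence condition on $r$, bound the resulting character sums $S_\chi := \sum_{n \leq X} a^l_{\chi,n}$ via truncated Perron together with GRH, and then extract cancellation on average over $m$ using Cauchy--Schwarz combined with the orthogonality identity $\sum_m \chi(m^2) = \phi((y/l)^2)\,\mathbbm{1}[\chi^2 = \triv]$ (which follows from $\chi(m^2) = \chi^2(m)$).

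First I would write
$$\sum_{\substack{r \leq X/l^2\\ r \equiv m^2 \bmod (y/l)^2}} F(rl^2) = \frac{1}{\phi((y/l)^2)} \sum_{\chi} \overline{\chi}(m^2)\, S_\chi,$$
using that the hypothesis $\gcd(m, y/l) = 1$ forces $\gcd(r, y/l) = 1$ on the terms contributing. The trivial character's contribution, after applying Perron's formula and shifting the contour past the simple pole of $L^l(\triv, s)$ at $s = 1$, produces the main term $\frac{X}{\phi((y/l)^2)} \Res_{s=1} L^l(\triv, s)$; everything else is absorbed into the error below.

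For the pointwise bound on each $S_\chi$ (or the residue-subtracted $S_\triv$), I would apply truncated Perron's formula at height $T \sim X^{1/2}$ and, invoking GRH, shift the contour to $\Re(s) = 1/2 + \epsilon$. Using the factorization $L^l(\chi, s) = L_{\bbQ[i]}(\chi_{\adelic}, s)\, l^{-2s} \prod_{p \mid l,\ p \equiv 1 \bmod 4} Q_{\chi, l, p}(s)$, GRH and Phragm\'en--Lindel\"of give $|L_{\bbQ[i]}(\chi_\adelic, s)| = O_\epsilon((y(1+|t|))^\epsilon)$ on the shifted line (the analytic conductor of $\chi_\adelic$ is $O((y/l)^2)$); the factor $|l^{-2s}|$ contributes $l^{-1-2\epsilon}$, and the $Q_{\chi, l, p}$ are bounded by $\prod_{p \mid l}(v_p(l)+1) = \tau(l) = O_\epsilon(l^\epsilon)$ via the same geometric-series estimates used in Lemma \ref{lem:Lxds_subconvexity}. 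Combining these gives the uniform bound
$$\bigl|S_\chi - \delta_{\chi, \triv}\, X \Res_{s=1} L^l(\triv, s)\bigr| = O_\epsilon\bigl(l^{-1}\, y^\epsilon X^{1/2+\epsilon}\bigr).$$

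Finally, let $E(m)$ denote the expression inside the absolute value. Expanding $|E(m)|^2$ and summing over a full reduced residue system modulo $(y/l)^2$, the orthogonality identity couples each $\chi_1$ only to the $2^{\omega((y/l)^2)} = O_\epsilon(y^\epsilon)$ characters $\chi_2 = \chi_1 \psi$ with $\psi^2 = \triv$, so combining with the pointwise bound yields
$$\sum_{\substack{m \bmod (y/l)^2\\ \gcd(m, y/l) = 1}} |E(m)|^2 = O_\epsilon\bigl(y^\epsilon X^{1+\epsilon}/l^2\bigr).$$
For $N - M \leq (y/l)^2$, Cauchy--Schwarz on $\sum_{m \in [M,N]}|E(m)|$ directly gives the first bound $O_\epsilon(l^{-1}(N-M)^{1/2} y^\epsilon X^{1/2+\epsilon})$. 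For $N - M > (y/l)^2$, I would partition $[M, N]$ into $\lceil (N-M)/(y/l)^2 \rceil$ intervals of length $(y/l)^2$, apply the full-period Cauchy--Schwarz estimate to each, and sum to obtain $O_\epsilon((N-M) y^\epsilon X^{1/2+\epsilon}/y)$. The main obstacle is keeping explicit track of the $l$- and $y$-dependence through the subconvexity step: the archimedean interpolation via Phragm\'en--Lindel\"of is routine, but the auxiliary factors $l^{-2s}$ and $Q_{\chi, l, p}(s)$ must be bounded uniformly in $\chi$ so that the final $1/l$ gain (and not merely $1/\sqrt{l}$) survives.
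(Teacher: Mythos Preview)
Your proposal is correct and follows essentially the same route as the paper: Cauchy--Schwarz over $m$, orthogonality on $(\bbZ/(y/l)^2\bbZ)^\times$, and the GRH-based pointwise bound $|S_\chi - \delta_{\chi,\triv}X\Res_{s=1}L^l(\chi_{\triv},s)| = O_\epsilon(l^{-1}y^\epsilon X^{1/2+\epsilon})$ (the paper isolates this last estimate as Lemmas \ref{lem:L^l_critical_line_bound} and \ref{lem:sum_of_f(r)x(r)}). The only noteworthy difference is in how the square $m^2$ is handled in the $L^2$ step: the paper observes that the map $m \mapsto m^2$ on reduced residues has fibers of size at most $2^{\omega(y/l)}$, thereby reducing to the plain Parseval identity (Lemma \ref{lem:temp131313}) with $r \equiv m$ rather than $r \equiv m^2$; you instead keep $m^2$, expand $|E(m)|^2$, and use that $\sum_m (\chi_1\overline{\chi_2})(m^2)$ vanishes unless $\chi_1\overline{\chi_2}$ is $2$-torsion, of which there are $O(2^{\omega(y/l)})$. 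These are dual formulations of the same fact and yield identical bounds.
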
 This will be obtained through a technical lemma applied to estimates on the sums of coefficients of $L^l(\chi,s)$. One should compare this to the ``unaveraged" error term $$O(\frac{N-M}{l}y^{\epsilon}X^{1/2+\epsilon}),$$ which can be obtained by directly applying the Tauberian theorem. With the averaged error term, one saves a factor of $\min(y/l,\sqrt{N-M})$, which will be crucial in the case where $l$ is small.

\begin{lemma}
\label{lem:L^l_critical_line_bound} Let $l|y$. Let $\chi \in \widehat{(\bbZ/(y/l)^2\bbZ)}$ be any character. Assuming GRH, for all $\epsilon>0$,
$$L^l(\chi,1/2+it) = O_{\epsilon}((|s|y)^{\epsilon}l^{-1})$$
\end{lemma}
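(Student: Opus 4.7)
The plan is to exploit the factorization
$$L^l(\chi, s) = L_{\bbQ[i]}(\chi_{\adelic}, s)\, l^{-2s} \prod_{p \mid l,\ p \equiv 1 \bmod 4} Q_{\chi,l,p}(s)$$
derived immediately before the statement, and to bound each factor separately on the line $\Re s = 1/2$. The middle factor contributes $|l^{-2s}| = l^{-1}$, which accounts for the full $l^{-1}$ saving in the claimed bound. Everything else should be of polynomial size in $(|s|y)^\epsilon$.

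For the Hecke $L$-function of $\bbQ[i]$, the conductor of $\chi_{\adelic}$ divides $(y/l)^2$ and its archimedean parameter is bounded, so the analytic conductor is polynomial in $y$. Under GRH, a direct application of the Phragm\'en--Lindel\"of argument used in the proof of Lemma \ref{lem:Lxds_subconvexity} (invoking \cite[Cor.~5.20]{iwaniec_kowalski_2004} at $\Re s = 1/2$) yields $|L_{\bbQ[i]}(\chi_{\adelic}, 1/2+it)| = O_\epsilon((|s| y)^\epsilon)$. A convenient feature of the factorization is that the local factors at $p = 2$ and at $p \equiv 3 \bmod 4$ are all absorbed into this Hecke $L$-function, so no separate treatment of those primes is needed.

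The remaining task is to bound the finite product $\prod_{p \mid l,\ p \equiv 1 \bmod 4} Q_{\chi,l,p}(1/2+it)$. For each such $p$, the numerator is bounded in absolute value by
$$\sum_{i=0}^{\infty} \frac{2v_p(l)+1+i}{p^{i/2}} \leq \frac{2v_p(l)+1}{1-p^{-1/2}} + \frac{p^{-1/2}}{(1-p^{-1/2})^2},$$
while the denominator equals $(1-\chi(p)p^{-1/2-it})^{-2}$, whose absolute value is at least $(1+p^{-1/2})^{-2}$. Hence $|Q_{\chi,l,p}(1/2+it)| = O(v_p(l)+1)$ with an absolute constant, and taking the product over $p \mid l$ gives $O_\epsilon(l^\epsilon)$ via the standard divisor-function bound $\prod_{p \mid l}(v_p(l)+1) \leq d(l)$. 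Combining the three estimates and absorbing $l^\epsilon$ into $y^\epsilon$ (using $l \mid y$) produces the claimed bound $|L^l(\chi, 1/2+it)| = O_\epsilon((|s|y)^\epsilon l^{-1})$. The only genuinely nontrivial input is the GRH subconvexity for the Hecke $L$-function; the rest is local bookkeeping.
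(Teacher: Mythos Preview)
Your proposal is correct and follows essentially the same approach as the paper: use the factorization $L^l(\chi,s)=L_{\bbQ[i]}(\chi_{\adelic},s)\,l^{-2s}\prod_{p\mid l,\ p\equiv 1\bmod 4}Q_{\chi,l,p}(s)$, bound the Hecke $L$-function by $(|s|y)^{\epsilon}$ under GRH, extract $l^{-1}$ from $l^{-2s}$ on $\Re(s)=1/2$, and bound each $|Q_{\chi,l,p}(1/2+it)|$ by an absolute constant times $2v_p(l)+1$ so that the product is $O_\epsilon(l^\epsilon)$. The only cosmetic difference is that the paper records the explicit constant $(1-p^{-1/2})^{-4}$ in the local bound, while you absorb it directly into an $O(v_p(l)+1)$; since the relevant primes satisfy $p\geq 5$, your absolute-constant claim is justified.
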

\begin{proof}
First we have $$L^l(\chi,s)=L_{\bbQ[i]}(\chi_{\adelic},s) l^{-2s}\prod_{p|l, p \equiv 1 \mod 4}Q_{\chi,l,p}(s).$$ For $L_{\bbQ[i]}(\chi_{\adelic},s)$, we may use the GRH to obtain a bound of $(\frac{y|s|}{l})^{\epsilon}$ on the line $\Re(s)=1/2$. For $\prod_{p|l}Q_{\chi,l,p}(s)$, we have that for $s$ on the line $\Re(s) = 1/2$, \begin{align*}
    |Q_{\chi,l,p}(s)| &= \left| \frac{\sum_{i=0}^{\infty} (2v_p(l)+1+i)\chi(p^{i})/p^{is}}{\sum_{i=0}^{\infty} (i+1)\chi(p^{i})/p^{is}}\right| \\
    &=\left|(1-\chi(p)p^{-s})(1-\overline{\chi(p)}p^{-s})\sum_{i=0}^{\infty} (2v_p(l)+1+i)\chi(p^{i})/p^{is}\right| \\
    &\leq \frac{2v_p(l)+1}{(1+p^{-1/2})^2}\sum_{i=0}^{\infty}(i+1)p^{-i/2}\\
    &= (2v_p(l)+1) \frac{(1+p^{-1/2})^2}{(1-p^{-1/2})^2} \leq \frac{2v_p(l)+1}{(1-p^{-1/2})^4}
\end{align*}
Thus, for $s$ on the line $\Re(s) = 1/2$, $$\left|\prod_{p|l, p \equiv 1 \mod 4} Q_{\chi,l,p}(s)\right| \leq \prod_{p|l, p \equiv 1 \mod 4} \frac{2v_p(l)+1}{(1-p^{-1/2})^4} = O_{\epsilon}(l^{\epsilon}).$$ Thus, combining these bounds, and noting that $l \leq y$, we obtain our desired bound $(y|s|)^{\epsilon}l^{-1}$.
\end{proof}

Applying the above lemmas gives us an estimate on the sum of coefficients of $L^l(\chi,s)$, from which we obtain the following lemma. The proof of this is quite standard. However, we write the proof out to keep track of the error terms more closely. Specifically, we need to keep track of the error terms as $l$ and $y$ vary.
\begin{lemma}
\label{lem:sum_of_f(r)x(r)} Assuming GRH, for all $\epsilon>0$, 
$$\sum_{r=1}^{X/l^2} F(rl^2)\overline{\chi}(r)=E_0(\chi)X+O_{\epsilon}(l^{-1}|y|^{\epsilon}X^{1/2+\epsilon}),$$ where $$E_0(\chi)=\begin{cases}
\Res_{s=1} L^l(\chi_{\triv},s) & \chi = \chi_{\triv}\\
0 & \chi \neq \chi_{\triv}.
\end{cases}$$
\end{lemma}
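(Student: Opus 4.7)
The plan is to apply Perron's formula to the Dirichlet series $L^l(\chi,s) = \sum_{n \geq 1} a^l_{\chi,n}/n^s$, whose coefficients are exactly $F(n)\chi(n/l^2)$ when $l^2 \mid n$ and zero otherwise, so that the partial sum $\sum_{r \leq X/l^2} F(rl^2)\overline{\chi}(r) = \sum_{n \leq X} a^l_{\chi,n}$ is what we want to evaluate. First I would write down an effective truncated Perron's formula, representing this partial sum as
\begin{equation*}
\frac{1}{2\pi i} \int_{c-iT}^{c+iT} L^l(\chi,s) \frac{X^s}{s}\,ds \;+\; (\text{Perron error})
\end{equation*}
with $c = 1 + 1/\log X$ and $T$ a parameter to be optimized (around $T = X^{1/2}$). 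The standard Perron error term is controlled using the trivial bound $F(n) \leq \tau(n) = O_\epsilon(n^\epsilon)$ on the coefficients, and will contribute $O_\epsilon(l^{-2+\epsilon} X^{1+\epsilon}/T + X^\epsilon)$ which is absorbed into the claimed bound once we pick $T$ appropriately.

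Next I would shift the contour leftward to $\Re(s) = 1/2 + \epsilon$. By Lemma \ref{lem:Lxds_meromorphic} (or rather the analogous meromorphy statement for $L^l(\chi,s)$ coming from its factorization $L_{\bbQ[i]}(\chi_{\adelic},s) l^{-2s} \prod_{p \mid l,\, p \equiv 1\,(4)} Q_{\chi,l,p}(s)$), the only possible pole in this region is a simple pole at $s=1$ coming from $L_{\bbQ[i]}(\chi_{\adelic},s)$, and that occurs exactly when $\chi_{\adelic}$ is trivial, i.e. when $\chi = \chi_{\triv}$. In the trivial case the residue at $s=1$ contributes $X \cdot \Res_{s=1} L^l(\chi_{\triv},s)$, which is the asserted main term; otherwise the shift is through a holomorphic region and there is no main term.

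The remaining work is to bound the shifted contour integral $\frac{1}{2\pi i}\int_{1/2+\epsilon - iT}^{1/2+\epsilon+iT} L^l(\chi,s) X^s/s\,ds$ together with the two horizontal pieces. Here I would invoke Lemma \ref{lem:L^l_critical_line_bound}, which under GRH gives $|L^l(\chi,1/2+\epsilon+it)| = O_\epsilon((|t|y)^\epsilon l^{-1})$ (and a convexity interpolation handles the horizontal pieces). Plugging this in, the vertical integral is bounded by
\begin{equation*}
O_\epsilon\!\left( l^{-1} y^\epsilon X^{1/2+\epsilon} \int_{-T}^{T} \frac{(1+|t|)^\epsilon}{1+|t|}\,dt \right) = O_\epsilon(l^{-1} y^\epsilon X^{1/2 + \epsilon}),
\end{equation*}
after absorbing $\log T$ factors into $X^\epsilon$. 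Balancing $T$ with the Perron truncation error (take $T$ a small power of $X$) cleans up both errors into the stated $O_\epsilon(l^{-1} y^\epsilon X^{1/2+\epsilon})$.

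The main obstacle is bookkeeping: one must verify that the bound of Lemma \ref{lem:L^l_critical_line_bound} really gives the factor $l^{-1}$ (and not just $l^{-1+\epsilon}$ or worse), and that the Perron error is genuinely controlled uniformly in $l$ and $y$. The key point, already built into Lemma \ref{lem:L^l_critical_line_bound}, is that the factor $l^{-2s}$ in the factorization of $L^l(\chi,s)$ contributes $l^{-1}$ at $\Re(s) = 1/2$, while the local factors $Q_{\chi,l,p}$ contribute only $O_\epsilon(l^\epsilon)$; everything else reduces to the standard GRH subconvex/convexity bound for the Hecke $L$-function over $\bbQ[i]$ of conductor $O(y^2/l^2)$.
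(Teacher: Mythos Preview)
Your proposal is correct and follows essentially the same route as the paper: Perron's formula for $L^l(\chi,s)$, contour shift past $s=1$ picking up the residue when $\chi=\chi_{\triv}$, and the GRH bound of Lemma~\ref{lem:L^l_critical_line_bound} on the shifted vertical line. The only cosmetic differences are parameter choices (the paper takes $c=2$, $T=X'^3$ with $X'$ a half-integer, and shifts all the way to $\Re(s)=1/2$, whereas you take $c=1+1/\log X$, $T$ a small power of $X$, and shift to $\Re(s)=1/2+\epsilon$); both choices close with the same error term.
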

\begin{proof}

Recall that $\sum_{r=1}^{X/l^2} F(rl^2)\overline{\chi}(r)=\sum_{n=1}^{X}a^l_{\chi,n}$. So, we shall estimate the latter sum. For the rest of this proof, let $a_n:=a^l_{\chi,n}$. Let $X'$ be the closest half-integer to $X$. Perron's formula \cite[Theorem 5.2]{montgomery_vaughan_2007} gives us that $$\sum_{n=1}^{X'} a_n = \frac{1}{2\pi i} \int_{2-iT}^{2+iT}L^l(\chi,s) \frac{X'^s}{s} ds + R,$$ where $$R=\frac{1}{\pi} \sum_{X'/2 < n < X'} a_n \min(1,\frac{1}{T \log \frac{X'}{n}}) - \frac{1}{\pi} \sum_{X'<n<2X'} a_n \min(1,\frac{1}{T \log \frac{n}{X'}}) + O(\frac{X'^2}{T}\sum_{n} \frac{|a_n|}{n^2}).$$ Since we took $X'$ to be a half-integer $\log \frac{X'}{n}$ and $\frac{n}{X'}$ will be bounded from below by $$\log(\frac{X'}{X'-1/2}) \gg \frac{1}{X'}.$$ Now, $a_n = O_{\epsilon}(n^{\epsilon})$, so after choosing $T=X'^3$, we may bound $R$ by $O(1/X')$.

Now it comes down to estimating $$\sum_{n=1}^{X'} a_n = \frac{1}{2\pi i} \int_{2-iT}^{2+iT}L^l(\chi,s) \frac{X'^s}{s} ds + R,$$ where $T=X'^3$. For this, we use the GRH, and push the contour of integration to the left. We will use the contour outlined by the four points $2-iT, 1/2-iT, 1/2+iT, 2+iT$. Now if $\chi=\chi_{\triv}$, we pick up a pole at $s=1$ of residue $X'\Res_{s=1} L^l(\chi_{\triv},s)$. Otherwise, there is no pole.

The rest of the estimate comes from the contour. By the Phragm\'en-Lindel\"of Principle, plus the estimate based on the GRH that $L^l(\chi,1/2+it) = O_{\epsilon}(|yt|^{\epsilon})$, we have $$\left\lvert\int_{2-iT}^{1/2-iT} L^l(\chi,s) \frac{X'^s}{s}ds\right\lvert \leq |y|^{\epsilon}|T|^{\epsilon}\int_{2-iT}^{1/2-iT} \left\lvert\frac{X'^s}{s}\right\lvert ds\leq |y|^{\epsilon}|T|^{\epsilon}\frac{X'^2}{T}=O_{\epsilon}(|y|^{\epsilon}).$$ Similarly, we also have $$\left\lvert \int_{2+iT}^{1/2+iT} L^l(\chi,s) \frac{X'^s}{s}ds\right\lvert = O_{\epsilon}(y^{\epsilon}).$$

The only part left of the contour is $1/2-iT$ to $1/2+iT$. Here we use Lemma \ref{lem:L^l_critical_line_bound}, and obtain $$\left\lvert\int_{1/2-iT}^{1/2+iT} L^l(\chi,s) \frac{X'^s}{s}ds\right\lvert = O(l^{-1}|y|^{\epsilon}T^{\epsilon}\int_{1/2-iT}^{1/2+iT} \left \lvert \frac{X'^s}{s}\right\lvert ds)=O(l^{-1}|y|^{\epsilon}X'^{1/2}T^{2\epsilon})=O(l^{-1}|y|^{\epsilon}X'^{1/2+\epsilon}).$$ After plugging in $X'=X+O(1)$, we're done.
\end{proof}
We also have the following technical lemma.
\begin{lemma} \label{lem:temp131313}
Let $f:\widehat{(\bbZ/q\bbZ)^\times} \to \bbC$ be any function. Then, $$\sum_{m=1,(m,q)=1}^q \left\lvert\sum_{\chi}f(\chi)\chi(m)\right\lvert^2=\phi(q)\sum_{\chi}|f(\chi)|^2.$$
\end{lemma}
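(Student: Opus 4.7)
The plan is to prove the identity by a direct expansion of the square combined with the orthogonality relations for Dirichlet characters modulo $q$. This is essentially Plancherel's theorem for the finite abelian group $(\bbZ/q\bbZ)^\times$, and no deeper input is needed.

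First I would expand $|\cdot|^2$ as a product of the sum with its complex conjugate, writing
\[
\sum_{\substack{m=1\\(m,q)=1}}^q \Bigl\lvert \sum_\chi f(\chi)\chi(m)\Bigr\rvert^2
= \sum_{\substack{m=1\\(m,q)=1}}^q \sum_{\chi_1,\chi_2} f(\chi_1)\overline{f(\chi_2)}\,\chi_1(m)\overline{\chi_2(m)}.
\]
Next I would exchange the order of summation, pulling the sum over $m$ to the inside, so that the problem reduces to evaluating $\sum_{m,(m,q)=1} \chi_1(m)\overline{\chi_2(m)}$ for each pair $(\chi_1,\chi_2)$.

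The key step is the standard orthogonality relation for Dirichlet characters,
\[
\sum_{\substack{m=1\\(m,q)=1}}^q \chi_1(m)\overline{\chi_2(m)} =
\begin{cases} \phi(q) & \chi_1=\chi_2,\\ 0 & \chi_1\neq \chi_2,\end{cases}
\]
which follows because $\chi_1\overline{\chi_2}$ is itself a character of $(\bbZ/q\bbZ)^\times$, and a nontrivial character sums to zero over the group. Substituting this back collapses the double sum over $(\chi_1,\chi_2)$ to the diagonal and yields the claimed identity
\[
\phi(q)\sum_\chi |f(\chi)|^2.
\]

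There is no real obstacle here; the only thing to be slightly careful about is ensuring the summation range $(m,q)=1$ matches the hypothesis of the orthogonality relation (which it does by definition, since $\chi$ is extended by zero off $(\bbZ/q\bbZ)^\times$). I would state the orthogonality relation as a known fact and perform the three-line computation above.
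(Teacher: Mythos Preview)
Your proof is correct and follows essentially the same approach as the paper: expand the modulus squared, interchange the order of summation, and apply the orthogonality relations for Dirichlet characters to collapse to the diagonal. The paper's argument is line-for-line the same computation.
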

\begin{proof}
\begin{align*}
    \sum_{m=1,(m,q)=1}^q \left\lvert\sum_{\chi}f(\chi)\chi(m)\right\lvert^2&=\sum_{m=1,(m,q)=1}^q \sum_{\chi}\sum_{\chi'}f(\chi)\chi(m)\overline{f(\chi')\chi'(m)} \\
    &= \sum_{\chi}\sum_{\chi'}f(\chi)\overline{f(\chi')}\sum_{m=1,(m,q)=1}^q \chi(m)\overline{\chi'(m)}\\
    &=\sum_{\chi}\sum_{\chi'}f(\chi)\overline{f(\chi')} \begin{cases}\phi(q) & \chi=\chi' \\ 0 & \chi \neq \chi'\end{cases}\\
    &=\phi(q)\sum_{\chi}|f(\chi)|^2
\end{align*}
\end{proof}

Now we may prove the averaged error.
\begin{proof}[Proof of Proposition \ref{prop:averaged_error}]

First, we have Cauchy-Schwarz giving us that \begin{align*}
    &\sum_{\substack{m=M\\(m,(y/l)^2)=1}}^{N} \left(\sum_{\substack{r=1\\ r \equiv m^2 \mod (y/l)^2}}^{X/l^2} F(rl^2)-\frac{X}{\phi((y/l)^2)}\Res_{s=1}L^l(\chi_{\triv},s)\right) \\
    &\leq (N-M)^{1/2}\sqrt{\sum_{\substack{m=M\\(m,(y/l)^2)=1}}^{N} \left\lvert\sum_{\substack{r=1\\ r \equiv m^2 \mod (y/l)^2}}^{X/l^2} F(rl^2)-\frac{X}{\phi((y/l)^2)}\Res_{s=1}L^l(\chi_{\triv},s)\right\lvert^2}\\
    &\leq (N-M)^{1/2}\sqrt{\bigg\lceil \frac{N-M}{(y/l)^2} \bigg\rceil\sum_{\substack{m=1\\(m,(y/l)^2)=1}}^{(y/l)^2} \left\lvert\sum_{\substack{r=1\\ r \equiv m^2 \mod (y/l)^2}}^{X/l^2} F(rl^2)-\frac{X}{\phi((y/l)^2)}\Res_{s=1}L^l(\chi_{\triv},s)\right\lvert^2}\\
    &=(N-M)^{1/2}\bigg\lceil \frac{N-M}{(y/l)^2} \bigg\rceil^{1/2}\sqrt{\sum_{\substack{m=1\\(m,(y/l)^2)=1}}^{(y/l)^2} \left\lvert\sum_{\substack{r=1\\ r \equiv m^2 \mod (y/l)^2}}^{X/l^2} F(rl^2)-\frac{X}{\phi((y/l)^2)}\Res_{s=1}L^l(\chi_{\triv},s)\right\lvert^2}
\end{align*} Now $$(\bbZ/(y/l)^2\bbZ)^\times\cong \prod_{p|(y/l)^2} (\bbZ/p^{v_p((y/l)^2)}\bbZ)^\times.$$ Thus, by the structure of unit groups of the form $(\bbZ/p^i\bbZ)^\times$, we have that as $m$ runs between $1$ and $(y/l)^2$ with $(m,(y/l)^2)=1$, the number of times $m^2 \mod (y/l)^2$ hits any residue class $\mod (y/l)^2$ is at most $2^{\omega(y/l)}$. Thus, we have the following continued estimate $$\leq (N-M)^{1/2}\bigg\lceil \frac{N-M}{(y/l)^2} \bigg\rceil^{1/2}\sqrt{2^{\omega(y/l)}\sum_{\substack{m=1\\(m,(y/l)^2)=1}}^{(y/l)^2} \left\lvert\sum_{\substack{r=1\\ r \equiv m \mod (y/l)^2}}^{X/l^2} F(rl^2)-\frac{X}{\phi((y/l)^2)}\Res_{s=1}L^l(\chi_{\triv},s)\right\lvert^2}$$\begin{equation}
\label{eqn:temp2}
    \leq (N-M)^{1/2}\bigg\lceil \frac{N-M}{(y/l)^2} \bigg\rceil^{1/2}y^{\epsilon}\sqrt{\sum_{\substack{m=1\\(m,(y/l)^2)=1}}^{(y/l)^2} \left\lvert\sum_{\substack{r=1\\ r \equiv m \mod (y/l)^2}}^{X/l^2} F(rl^2)-\frac{X}{\phi((y/l)^2)}\Res_{s=1}L^l(\chi_{\triv},s)\right\lvert^2},
\end{equation} using the fact that $2^{\omega(y/l)}=O(y^{\epsilon})$.

Next, apply Lemma \ref{lem:temp131313} to $$f(\chi)=\frac{1}{\phi((y/l)^2)} \left(\sum_{r=1}^{X/l^2} F(rl^2)\overline{\chi}(r)-E_0(\chi)X\right).$$ This will show that \begin{align*}
    &\sum_{\substack{m=1\\(m,(y/l)^2)=1}}^{(y/l)^2} \left(\sum_{\substack{r=1\\ r \equiv m^2 \mod (y/l)^2}}^{X/l^2} F(rl^2)-\frac{X}{\phi((y/l)^2)}\Res_{s=1}L^l(\chi_{\triv},s)\right)^2\\
    &=\frac{1}{\phi((y/l)^2)}\sum_{\chi} \left\lvert\sum_{r=1}^{X/l^2} F(rl^2)\overline{\chi}(r)-E_0(\chi)X\right\lvert^2\\
    &=\frac{1}{\phi((y/l)^2)}\sum_{\chi}O(l^{-2}y^{2\epsilon}X^{1+2\epsilon})\\
    &= O(l^{-2}y^{2\epsilon}X^{1+2\epsilon}).
\end{align*} where the second to last inequality is by Lemma \ref{lem:sum_of_f(r)x(r)}. Thus, plugging this into Equation \ref{eqn:temp2}, we get the bound $$(N-M)^{1/2}\bigg\lceil \frac{N-M}{(y/l)^2} \bigg\rceil^{1/2}y^{\epsilon}X^{1/2+\epsilon}l^{-1}=\begin{cases}
O(l^{-1}(N-M)^{1/2}y^{2\epsilon}X^{1/2+\epsilon}) & N-M \leq (y/l)^2 \\
O(\frac{N-M}{y}y^{2\epsilon}X^{1/2+\epsilon}) & \text{ otherwise}
\end{cases}$$
\end{proof}

For the next steps, we need a bound on the main term, specifically $\Res_{s=1} L^l(\chi_{\triv},s)$. 
\begin{lemma}
\label{lem:L^l_residue} Let $\chi_{\triv} \in \widehat{(\bbZ/(y/l)^2\bbZ)}$ be the trivial character. Then,
$$\Res_{s=1} L^l(\chi_{\triv},s)=O(\frac{\prod_{p|l, p \equiv 1 \mod 4} (2v_p(l)+1)}{l^2})$$
\end{lemma}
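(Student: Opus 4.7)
The plan is to use the factorization $L^l(\chi,s) = L_{\bbQ[i]}(\chi_{\adelic},s) \cdot l^{-2s} \prod_{p \mid l,\, p \equiv 1\,(4)} Q_{\chi,l,p}(s)$ established just before Lemma~\ref{lem:L^l_critical_line_bound}, specialize to $\chi = \chi_{\triv}$, and bound each of the three factors at $s=1$ separately.

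First I would handle the Hecke L-function factor. For $\chi = \chi_{\triv}$ (the principal character mod $(y/l)^2$), the adelic lift $\chi_{\triv,\adelic}$ is trivial on $\bbZ[i]_\pi^\times$ for every prime $\pi$, since by definition $\chi(N(j)) = 1$ whenever $N(j)$ is coprime to $(y/l)^2$. Consequently $L_{\bbQ[i]}(\chi_{\triv,\adelic},s)$ agrees with the Dedekind zeta function $\zeta_{\bbQ[i]}(s)$ up to modifying (or removing) the local Euler factors at the finitely many primes $\pi \mid (y/l)^2$. Each such modification is of the form $(1 - N(\pi)^{-s})$ or $1$, which is bounded by an absolute constant in absolute value at $s=1$, and their product is uniformly bounded by $1$. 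Since $\Res_{s=1}\zeta_{\bbQ[i]}(s) = \pi/4$ by the class number formula, I would conclude $\Res_{s=1} L_{\bbQ[i]}(\chi_{\triv,\adelic},s) = O(1)$ uniformly in $y$ and $l$.

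Next, $l^{-2s}$ evaluates to $l^{-2}$ at $s=1$, accounting for the $l^{-2}$ in the target bound. For the remaining factors $Q_{\chi_{\triv},l,p}(1)$ with $p \equiv 1\,(4)$ and $p \mid l$, the definition gives
\[
Q_{\chi_{\triv},l,p}(1) = \frac{\sum_{i=0}^{\infty}(2v_p(l)+1+i)\chi_{\triv}(p^i)p^{-i}}{\sum_{i=0}^{\infty}(i+1)\chi_{\triv}(p^i)p^{-i}}.
\]
I would observe that the coefficient ratio $\tfrac{2v_p(l)+1+i}{i+1} = 1 + \tfrac{2v_p(l)}{i+1}$ is maximized at $i=0$ with value $2v_p(l)+1$, so term-by-term the numerator is bounded by $(2v_p(l)+1)$ times the denominator, yielding $Q_{\chi_{\triv},l,p}(1) \leq 2v_p(l)+1$. (The case $p \mid y/l$ truncates the sum to $i=0$ and gives exactly $2v_p(l)+1$.)

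Multiplying the three estimates yields the claimed bound. There is no serious obstacle here: the argument is essentially bookkeeping on local factors. The only mild subtlety is verifying that the local modifications of $L_{\bbQ[i]}(\chi_{\triv,\adelic},s)$ at primes $\pi \mid (y/l)^2$ contribute $O(1)$ uniformly in $y$, rather than decaying (which could hurt) or growing; this is immediate because each modifying factor is of the form $(1 - N(\pi)^{-1})$, uniformly bounded above by $1$ and below away from $0$ on average, but for this upper bound on the residue we only need the upper bound $\leq 1$.
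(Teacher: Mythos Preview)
Your proposal is correct and follows essentially the same approach as the paper: factor $L^l(\chi_{\triv},s)$ as (Dedekind zeta of $\bbQ[i]$ or its imprimitive variant) times $l^{-2s}$ times the $Q_{\chi_{\triv},l,p}$ factors, then bound each piece at $s=1$, with the key step being the termwise inequality $\tfrac{2v_p(l)+1+i}{i+1}\le 2v_p(l)+1$ to get $Q_{\chi_{\triv},l,p}(1)\le 2v_p(l)+1$. If anything you are slightly more careful than the paper in noting that the adelic lift of the principal character yields $\zeta_{\bbQ[i]}(s)$ with Euler factors at primes over $(y/l)^2$ removed, and checking that these removed factors only help the upper bound.
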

\begin{proof}

Since $$L^l(\chi_{\triv},s) = \zeta_{\bbQ[i]}(s) l^{-2s} \prod_{p|l, p \equiv 1 \mod 4} Q_{\chi_{\triv},l,p}(s),$$ the residue at $s=1$ is $$(\Res_{s=1}\zeta_{Q[i]}(s))\prod_{p|l, p \equiv 1 \mod 4}Q_{\chi_{\triv},l,p}(1)$$ which by the class number formula is $$\frac{\pi}{l^2} \prod_{p|l, p \equiv 1 \mod 4}Q_{\chi_{\triv},l,p}(1).$$ Finally, we have \begin{align*}
    |Q_{\chi_{triv},l,p}(1)| &= \left| \frac{\sum_{i=0}^{\infty} (2v_p(l)+1+i)\chi_{triv}(p^{i})/p^{i}}{\sum_{i=0}^{\infty} (i+1)\chi_{triv}(p^{i})/p^{i}}\right| \\
    &= \frac{\left|\sum_{i=0}^{\infty} (2v_p(l)+1+i)\chi_{triv}(p^{i})/p^{i}\right|}{\left|\sum_{i=0}^{\infty} (i+1)\chi_{triv}(p^{i})/p^{i}\right|} \\
    &\leq (2v_p(l)+1)\frac{\left|\sum_{i=0}^{\infty} (i+1)\chi_{triv}(p^{i})/p^{i}\right|}{\left|\sum_{i=0}^{\infty} (i+1)\chi_{triv}(p^{i})/p^{i}\right|} \\
    &= 2v_p(l)+1    
\end{align*}
Taking the product over $p|l$ and $p \equiv 1 \mod 4$, we obtain our bound.

\end{proof}

We will also need the following technical lemma.
\begin{lemma}[Erdos-Turan inequality]
\label{lem:erdos_turan}
Fix $y$ and $l|y$. Let $i,j \in [0,y)$ be any real number. Then, $$\# \{c \in [i,j] : l|c, (c/l,y/l)=1\} = \frac{\phi(y/l)}{y}(j-i)+O(\sqrt{\phi(y/l)}).$$
\end{lemma}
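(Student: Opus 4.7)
The approach is a direct Möbius inversion after a change of variables. First I would substitute $c = lm$ to rewrite the set as $\{m \in [i/l, j/l] : \gcd(m, y/l) = 1\}$, reducing to counting integers coprime to $y/l$ in a real interval of length $(j-i)/l$.

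For this count, I would apply the identity $\mathbbm{1}_{\gcd(m, y/l) = 1} = \sum_{d \mid \gcd(m, y/l)} \mu(d)$ and interchange the order of summation. Since the number of multiples of $d$ in an interval of length $(j-i)/l$ is $\frac{j-i}{ld} + O(1)$, this yields
$$\sum_{d \mid y/l} \mu(d) \left( \frac{j-i}{ld} + O(1) \right) = \frac{j-i}{l} \cdot \frac{\phi(y/l)}{y/l} + O(2^{\omega(y/l)}) = \frac{\phi(y/l)(j-i)}{y} + O(2^{\omega(y/l)}),$$
using $\sum_{d \mid n} \mu(d)/d = \phi(n)/n$ and the fact that the $O(1)$ error is supported on squarefree divisors of $y/l$.

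The final step is to verify $2^{\omega(n)} = O(\sqrt{\phi(n)})$ so that the error matches the stated $O(\sqrt{\phi(y/l)})$. Since $\phi(n) \geq \prod_{p \mid n}(p-1)$, one has $2^{\omega(n)}/\sqrt{\phi(n)} \leq \prod_{p \mid n} 2/\sqrt{p-1}$; each factor exceeds $1$ only at $p = 2$ and $p = 3$, so the product is bounded uniformly by $2\sqrt{2}$. There is no serious obstacle in the argument: everything reduces to standard inclusion-exclusion plus the elementary estimate comparing $2^\omega$ with $\sqrt{\phi}$.
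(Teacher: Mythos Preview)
Your proof is correct and in fact more elementary than the paper's. The paper does not carry out the Möbius inclusion--exclusion; instead it invokes the Erd\H{o}s--Tur\'an inequality from discrepancy theory, applied to the probability measure on $[0,1)$ supported uniformly on the $\phi(y/l)$ points $\{c/y : 0 \le c < y,\ l \mid c,\ \gcd(c/l,y/l)=1\}$, and reads off the $O(\sqrt{\phi(y/l)})$ error from the resulting exponential-sum bound. Your route avoids any harmonic analysis: after the substitution $c = lm$ it is a straight sieve computation producing the sharper error $O(2^{\omega(y/l)})$, which you then relax to $O(\sqrt{\phi(y/l)})$ via the clean pointwise inequality $2^{\omega(n)} \le 2\sqrt{2}\,\sqrt{\phi(n)}$. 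What the Erd\H{o}s--Tur\'an approach buys is generality (it would handle more complicated equidistributed sets where Möbius does not apply directly); what your approach buys is transparency and a strictly smaller error term, since $2^{\omega(n)} = n^{o(1)}$ while $\sqrt{\phi(n)} \asymp n^{1/2}$.
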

\begin{proof}
This is an application of the Erdos-Turan inequality to the probability measure $\mu$ on $[0,1)$ given by $\mu(S)=\frac{1}{\phi(y/l)}\#\{x \in S: yx \in \bbZ, l|yx, (yx/l,y/l)=1\}$.
\end{proof}

\subsection{$c\leq (By)^{1/3}$}

In this subsection, we wish to bound $$\sum_{(r,c) \in R_y, c\leq (By)^{1/3}} F(r).$$ Fix $c \leq  (By)^{1/3}$. Then, the bound $(r+c^2)\sqrt{|r-c^2|}\leq By$ implies that $r \leq  2(By)^{2/3}$. Thus, $$R_y \cap \{(r,c)|c\leq B^{1/3}\} \subset \{(r,c)|c\leq B^{1/3},r\leq B^{2/3}, y^2|r-c^2, r-c^2 \neq 0\}.$$ So, $$\sum_{(r,c) \in R_y, c\leq (By)^{1/3}} F(r) \leq \sum_{c=1}^{(By)^{1/3}}\sum_{r=1, r \equiv c^2 \mod y^2}^{2(By)^{2/3}} F(r).$$ 

\begin{lem}
\label{lem:mid_y_estimate_small_c}
Assuming GRH, for any $\epsilon>0$, we have $$\sum_{c=1}^{(By)^{1/3}} \sum_{r=1, r \equiv c^2 \mod y^2}^{2(By)^{2/3}} F(r) = O_{\epsilon}\left(\frac{B}{y}\sum_{l|y}\frac{\prod_{p|l}(2v_p(l)+1)}{l}+B^{2/3+\epsilon}y^{-1/3+\epsilon} + (By)^{1/2+\epsilon}\right).$$
\end{lem}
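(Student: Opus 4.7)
The plan is to partition the outer sum by $l := \gcd(c, y)$, writing $c = lm$ with $(m, y/l) = 1$; the substitution $r \mapsto r l^2$ converts the inner sum into
$$\sum_{\substack{r \equiv m^2 \mod (y/l)^2 \\ r \leq 2(By)^{2/3}/l^2}} F(rl^2),$$
which is precisely the shape to which Proposition \ref{prop:averaged_error} applies, with $X = 2(By)^{2/3}$ and $m$ running over $1 \leq m \leq N_l := (By)^{1/3}/l$ coprime to $y/l$. I will replace this inner sum by the predicted main term $\frac{X}{\phi((y/l)^2)} \Res_{s=1} L^l(\chi_{\triv}, s)$ plus the averaged error guaranteed by the proposition, and then sum over $m$ and over $l \mid y$.

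For the main term, the count of admissible $m$ is, by Lemma \ref{lem:erdos_turan}, equal to $\frac{\phi(y/l)}{y}(By)^{1/3} + O(\sqrt{\phi(y/l)})$. Combining this with the bound $\Res_{s=1} L^l(\chi_{\triv}, s) = O(\prod_{p|l}(2v_p(l)+1)/l^2)$ from Lemma \ref{lem:L^l_residue} and the identity $\phi((y/l)^2) = (y/l)\phi(y/l)$, the $\phi(y/l)$ factors cancel and the leading contribution for each $l$ collapses to
$$\frac{2Bl}{y} \cdot O\!\left(\frac{\prod_{p|l}(2v_p(l)+1)}{l^2}\right) = O\!\left(\frac{B\prod_{p|l}(2v_p(l)+1)}{yl}\right),$$
which when summed over $l \mid y$ produces the first claimed term $\frac{B}{y} \sum_{l|y} \frac{\prod_{p|l}(2v_p(l)+1)}{l}$. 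The Erd\H{o}s--Tur\'an secondary $O(\sqrt{\phi(y/l)})$ contributes at most $O(B^{2/3} y^{-1/3+\epsilon}/l)$ per $l$ after invoking $\prod_{p|l}(2v_p(l)+1) = O_\epsilon(y^\epsilon)$; summed over $l \mid y$ this is absorbed by the middle error term in the claim.

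Finally, I will sum the averaged-error output of Proposition \ref{prop:averaged_error} over $l$, splitting into the two cases the proposition distinguishes. When $N_l \leq (y/l)^2$ (i.e.\ $l \lesssim y^{5/3} B^{-1/3}$), the error per $l$ is $O(l^{-1} N_l^{1/2} y^\epsilon X^{1/2+\epsilon}) = O(l^{-3/2}(By)^{1/2+\epsilon})$, which sums over $l$ to $O((By)^{1/2+\epsilon})$; in the complementary case the error per $l$ is $O((N_l/y) y^\epsilon X^{1/2+\epsilon}) = O(B^{2/3+\epsilon} y^{-1/3+\epsilon}/l)$, which sums to $O(B^{2/3+\epsilon} y^{-1/3+\epsilon})$. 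Together these account for the remaining two error terms, and the lemma follows. The main obstacle is bookkeeping: the $\phi(y/l)$, $(y/l)$, and $l^{-2}$ factors arising from $\phi((y/l)^2)$ and $\Res_{s=1} L^l(\chi_{\triv},s)$ must align with the Erd\H{o}s--Tur\'an main count so that the leading main term simplifies to the clean shape asserted, and one must verify that every error contribution --- Erd\H{o}s--Tur\'an secondary plus both branches of Proposition \ref{prop:averaged_error} --- fits inside the $(By)^{1/2+\epsilon}$ and $B^{2/3+\epsilon}y^{-1/3+\epsilon}$ buckets without leakage.
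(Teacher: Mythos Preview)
Your proposal is correct and follows essentially the same strategy as the paper: split by $l=\gcd(c,y)$, apply Proposition~\ref{prop:averaged_error} with $X=2(By)^{2/3}$, then control the main term via Lemma~\ref{lem:L^l_residue} together with the Erd\H{o}s--Tur\'an count (Lemma~\ref{lem:erdos_turan}), and handle the two-branch error from Proposition~\ref{prop:averaged_error} by summing over $l\mid y$. The only cosmetic differences are that you work with the substituted range $N_l=(By)^{1/3}/l$ (the paper keeps the unreduced $(By)^{1/3}$, which only loosens the bound), giving a slightly different case-splitting threshold, and that you bound the Erd\H{o}s--Tur\'an secondary term $O(\sqrt{\phi(y/l)})$ separately inside the $B^{2/3+\epsilon}y^{-1/3+\epsilon}$ bucket, whereas the paper uses the ambient hypothesis $y<\sqrt{B}$ to absorb it directly into the leading count; both routes land on the same three error terms.
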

\begin{proof}

First we split our sum by $(c,y)$. $$\sum_{c=1}^{(By)^{1/3}} \sum_{r=1, r \equiv c^2 \mod y^2}^{2(By)^{2/3}} F(r) = \sum_{l|y}\sum_{c=1,(c,y)=l}^{(By)^{1/3}}\sum_{r=1, r \equiv c^2 \mod y^2}^{2(By)^{2/3}} F(r)=\sum_{l|y}\sum_{c=1,(c,y)=l}^{(By)^{1/3}} \sum_{\substack{r=1 \\ r \equiv (c/l)^2 \mod (y/l)^2}}^{2(By)^{2/3}/l^2} F(rl^2) $$ Now we may apply Proposition \ref{prop:averaged_error} and obtain $$\sum_{l|y}\sum_{c=1,(c,y)=l}^{(By)^{1/3}} \frac{2(By)^{2/3}}{\phi(y^2/l^2)}\Res_{s=1}L^l(\chi_{\triv},s)+\sum_{l|y}\begin{cases}
O(l^{-1}((By)^{1/3})^{1/2}y^{2\epsilon}(By)^{1/3+\epsilon}) & (By)^{1/3} \leq (y/l)^2 \\
O(\frac{(By)^{1/3}}{y}y^{2\epsilon}(By)^{1/3+\epsilon}) & \text{ otherwise}
\end{cases}$$$$=\sum_{l|y} \frac{2(By)^{2/3}}{\phi(y^2/l^2)}\Res_{s=1}L^l(\chi_{\triv},s)\sum_{c=1,(c,y)=l}^{(By)^{1/3}} 1+\sum_{l|y}\begin{cases}
O((By)^{1/2+\epsilon}l^{-1}) & (By)^{1/3} \leq (y/l)^2 \\
O(B^{2/3+\epsilon}y^{-1/3+\epsilon}) & \text{ otherwise}
\end{cases}$$$$=A+E$$ where $$A=\sum_{l|y} \frac{2(By)^{2/3}}{\phi(y^2/l^2)}\Res_{s=1}L^l(\chi_{\triv},s)\sum_{c=1,(c,y)=l}^{(By)^{1/3}} 1$$ and $$E=\sum_{l|y}\begin{cases}
O((By)^{1/2+\epsilon}l^{-1}) & (By)^{1/3} \leq (y/l)^2 \\
O(B^{2/3+\epsilon}y^{-1/3+\epsilon}) & \text{ otherwise}
\end{cases}.$$

We first find the bound for the main term $A$. By Lemma \ref{lem:L^l_residue}, we get a bound on $\Res_{s=1}L^l(\chi_{\triv},s)$ of $\frac{\prod_{p|l, p \equiv 1 \mod 4}(2v_p(l)+1)}{l^2}$, and by Lemma \ref{lem:erdos_turan}, we get that there are $(By)^{1/3}\frac{\phi(y/l)}{y}+O(\sqrt{\phi(y/l)})$ terms in the sum over $c$. Now in our case, we have $y < B^{5/19} < \sqrt{B}$, and so $B^{-1/3}y^{2/3} \leq 1 \leq \sqrt{\phi(y/l)}$. Rearranging the terms, we get $\sqrt{\phi(y/l)} \leq (By)^{1/3} \frac{\phi(y/l)}{y},$ and so the number of terms in the sum over $c$ is at most $O((By)^{1/3}\frac{\phi(y/l)}{y})$. Thus, we have that \begin{align*}
    A &=O\left(\sum_{l|y}\frac{2(By)^{2/3}}{\phi(y^2/l^2)}\frac{\prod_{p|l, p \equiv 1 \mod 4}(2v_p(l)+1)}{l^2}(By)^{1/3}\frac{\phi(y/l)}{y}\right)\\ 
    &= O\left(\frac{B}{y}\sum_{l|y}\frac{\prod_{p|l}(2v_p(l)+1)}{l}\right),
\end{align*}
where in the second equality, for the sake of an upper bound, we forget the restriction that $p \equiv 1 \mod 4$.


Now we estimate $E$. \begin{align*}
    E &= \sum_{l|y}\begin{cases}
    O((By)^{1/2+\epsilon}l^{-1}) & (By)^{1/3} \leq (y/l)^2 \\
    O(B^{2/3+\epsilon}y^{-1/3+\epsilon}) & \text{ otherwise}
    \end{cases} \\
    &= \sum_{l|y, l\leq B^{-1/6}y^{5/6}}
    O_{\epsilon}((By)^{1/2+\epsilon}l^{-1}) + \sum_{l|y, l>B^{-1/6}y^{5/6}}
    O_{\epsilon}(B^{2/3+\epsilon}y^{-1/3+\epsilon}) \\
    &= O_{\epsilon}((By)^{1/2+2\epsilon}+B^{2/3+\epsilon}y^{-1/3+2\epsilon})
\end{align*}
\end{proof}

\subsection{$(By)^{1/3}\leq c\leq \frac{B^{1/2-\delta}}{y^{3/10}}$}

In this subsection, we wish to bound $$\sum_{\substack{(r,c) \in R_y\\(By)^{1/3}\leq c\leq B^{1/2-\delta}/y^{3/10}}}F(r) .$$For $c$ in this range, we will bound the size of $r$. Recall that the set in question is $$R_y=\{(r,c):(r+c^2)\sqrt{|r-c^2|}\leq By, y^2|r-c^2, r-c^2 \neq 0\},$$ where we will delay our choice of $\delta$ until the end of this section.

\begin{lemma}
\label{lem:bound_on_r_based_on_c}
Fix any $c$. Suppose $r>c^2$. Then, $(r+c^2)\sqrt{|r-c^2|}\leq By$ implies that $r \geq c^2-\frac{(By)^2}{c^4}$ and $r\leq c^2+\frac{(By)^2}{c^4}$. 
\end{lemma}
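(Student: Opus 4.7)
The lemma is an elementary algebraic manipulation of the single inequality $(r+c^2)\sqrt{|r-c^2|} \leq By$, so the proof will be very short. My plan is to exploit the hypothesis $r > c^2$ to replace the factor $r+c^2$ by the lower bound $2c^2$, which allows us to isolate $|r-c^2|$ and obtain the desired two-sided estimate.

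In detail, since $r > c^2 \geq 0$ we have $r + c^2 > 2c^2 > 0$ and $|r-c^2| = r-c^2$. Dividing the inequality $(r+c^2)\sqrt{r-c^2} \leq By$ by $r+c^2 \geq 2c^2$ gives
\[
\sqrt{r-c^2} \,\leq\, \frac{By}{r+c^2} \,\leq\, \frac{By}{2c^2},
\]
and squaring yields $r - c^2 \leq \frac{(By)^2}{4c^4} \leq \frac{(By)^2}{c^4}$, which is the upper bound $r \leq c^2 + \frac{(By)^2}{c^4}$. The lower bound $r \geq c^2 - \frac{(By)^2}{c^4}$ is then immediate from the hypothesis $r > c^2$, since $\frac{(By)^2}{c^4} \geq 0$.

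There is no real obstacle here; the content of the lemma is simply that we are recording the two-sided window around $c^2$ in a form convenient for the next step of the argument (where one will sum $F(r)$ over $r$ in this narrow interval for each admissible $c$). The factor of $4$ that is discarded in passing from $\frac{(By)^2}{4c^4}$ to $\frac{(By)^2}{c^4}$ costs nothing asymptotically and keeps the statement symmetric, which presumably is why the author states it in this slightly weakened form.
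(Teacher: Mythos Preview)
Your proof is correct, and your upper-bound argument is exactly the paper's: replace $r+c^2$ by $2c^2$, square, and drop the factor of $4$. For the lower bound you correctly observe that it is vacuous under the stated hypothesis $r>c^2$; the paper's own proof in fact establishes the lower bound in the complementary regime $r\le c^2$ (writing $c^2\sqrt{c^2-r}\le(r+c^2)\sqrt{|r-c^2|}\le By$ and squaring), which is what is actually needed in the application to Lemma~\ref{lem:mid_y_estimate_medium_c}. That is a quirk of how the lemma is stated rather than a gap in your argument.
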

\begin{proof}
For the lower bound, we have $$c^2\sqrt{c^2-r}\leq (r+c^2)\sqrt{|r-c^2|}\leq By.$$ Thus, $c^4(c^2-r)\leq (By)^2$. Thus, $r \geq c^2-\frac{(By)^2}{c^4}$

For the upper bound, we assume $r>c^2$. Thus, we have $2c^2\sqrt{r-c^2}\leq (r+c^2)\sqrt{|r-c^2|}\leq By$. Hence $4c^4(r-c^2)\leq (By)^2$. Thus, $r\leq c^2+\frac{(By)^2}{4c^4}$. In particular, $r\leq c^2+\frac{(By)^2}{c^4}$.
\end{proof}

From this lemma, we see that $$\sum_{\substack{(r,c) \in R_y\\(By)^{1/3}\leq c\leq B^{1/2-\delta}/y^{3/10}}}F(r) \leq \sum_{c=(By)^{1/3}}^{B^{1/2-\delta}/y^{3/10}}\sum_{\substack{r=c^2-\frac{(By)^2}{c^4}\\r \equiv c^2 \mod y^2}}^{c^2+\frac{(By)^2}{c^4}}F(r).$$ Thus, it will suffice to bound the double sum.

\begin{lemma}
\label{lem:mid_y_estimate_medium_c}
Assuming GRH, for any $\epsilon\geq\delta>0$, we have $$\sum_{c=(By)^{1/3}}^{B^{1/2-\delta}/y^{3/10}}\sum_{\substack{r=c^2-\frac{(By)^2}{c^4}\\r \equiv c^2 \mod y^2}}^{c^2+\frac{(By)^2}{c^4}}F(r)=O_{\epsilon,\delta}(\frac{B}{y}\sum_{l|y}\frac{1}{l}\prod_{p|l}(2v_p(l)+1)+B^{1-2\delta+\epsilon}y^{-1+3\epsilon}).$$
\end{lemma}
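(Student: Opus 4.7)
The proof will parallel that of Lemma~\ref{lem:mid_y_estimate_small_c}, but with the additional complication that the range of $r$ now depends on $c$. First I would partition the range of $c$ dyadically as $C \leq c \leq 2C$ with $C \in \{2^k(By)^{1/3}\}_{k \geq 0}$, and for each $l \mid y$ restrict to $(c, y) = l$; on each dyadic piece the window half-width $W := (By)^2/C^4$ is essentially constant. For each such piece, I would estimate the contribution
$$\sum_{\substack{C \leq c \leq 2C \\ (c,y)=l}} \sum_{\substack{r \equiv c^2 \pmod{y^2} \\ |r-c^2|\leq W}} F(r)$$
by writing the inner sum as a difference of two partial sums of the form appearing in Proposition~\ref{prop:averaged_error}.

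The key trick to bring Proposition~\ref{prop:averaged_error} into play, despite the range of $r$ depending on $c$, is to further partition $[C, 2C]$ into sub-intervals of length $\Delta := \max(1, W/C)$. The choice of $\Delta$ is made so that on each sub-interval $[C_0, C_0 + \Delta]$, $c^2$ varies by at most $O(W)$; consequently the window $[c^2 - W, c^2 + W]$ is contained in a fixed interval $[X_-^*, X_+^*]$ with $X_+^* - X_-^* = O(W)$. Upper-bounding by this enlarged window, the inner sum becomes $\sum_{r \leq X_+^*,\, r \equiv c^2 \pmod{y^2}} F(r) - \sum_{r \leq X_-^*,\, r \equiv c^2 \pmod{y^2}} F(r)$, and Proposition~\ref{prop:averaged_error} applies to each piece with $N - M \sim \Delta/l$ and $X \sim C^2$. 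The over-count factor introduced by the enlargement is $O(1)$ by construction.

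Summing the main terms over sub-intervals, dyadic $C$, and divisors $l \mid y$ will reproduce the main term $\tfrac{B}{y}\sum_{l \mid y}\tfrac{1}{l}\prod_{p \mid l}(2v_p(l)+1)$ using Lemma~\ref{lem:L^l_residue} and Lemma~\ref{lem:erdos_turan}, by a computation entirely analogous to the one in Lemma~\ref{lem:mid_y_estimate_small_c}. The main obstacle will be controlling the error: for each sub-interval, Proposition~\ref{prop:averaged_error} (in whichever of its two cases applies, depending on the relative sizes of $\Delta/l$ and $(y/l)^2$) supplies an error bound, and after multiplying by the $\sim C^2/W$ sub-intervals, summing over dyadic $C$ up to $B^{1/2-\delta}/y^{3/10}$, and then over $l \mid y$, the total error will be dominated by the largest $C = B^{1/2-\delta}/y^{3/10}$. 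Careful bookkeeping of these sums should yield the target bound $O_\epsilon(B^{1-2\delta+\epsilon} y^{-1+3\epsilon})$; the degenerate regime $\Delta < 1$, where each sub-interval contains at most one $c$, is handled separately by applying Lemma~\ref{lem:sum_of_f(r)x(r)} pointwise and summing trivially in $c$.
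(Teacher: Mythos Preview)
Your approach is structurally the same as the paper's: partition the $c$-range into sub-intervals, freeze the $r$-window on each, and invoke Proposition~\ref{prop:averaged_error}. But your choice of sub-interval length $\Delta = \max(1, W/C)$ is the weak point, and the resulting error does not reach the stated bound.

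When $C$ is near its maximum $B^{1/2-\delta}/y^{3/10}$, your $\Delta = (By)^2/C^5$ is tiny; indeed $\Delta < 1$ once $C > (By)^{2/5}$, which occurs for all $y < B^{1/7 - O(\delta)}$. In that degenerate regime you revert to the pointwise bound of Lemma~\ref{lem:sum_of_f(r)x(r)} and forfeit the averaging gain of Proposition~\ref{prop:averaged_error}. For $l=1$ the pointwise error per $c$ is $O(y^\epsilon C^{1+\epsilon})$, and summing over the $\sim C$ values of $c$ in $[C,2C]$ gives $O(C^{2+\epsilon} y^\epsilon)$; at the top $C$ this is $B^{1-2\delta+\epsilon} y^{-3/5+\epsilon}$, larger than the target $B^{1-2\delta+\epsilon} y^{-1+3\epsilon}$ by a factor of $y^{2/5}$. (The same shortfall appears in the non-degenerate Case~1 range $B^{2/5} \lesssim C \lesssim (By)^{2/5}$, where the total error scales like $C^{9/2}/(By)$.) This surplus is fatal downstream: in the proof of Proposition~\ref{prop:sum_of_Sy_for_y_big}, summing $y^{-3/5}$ over $y \leq B^{5/19-\epsilon}$ contributes an extra $B^{2/19}$ that no admissible choice of $\delta \leq \epsilon_0$ can absorb.

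The paper instead uses sub-intervals of \emph{fixed} length $y^{4/5}$, independent of $C$. This keeps $\Delta$ large enough that Proposition~\ref{prop:averaged_error} always delivers a nontrivial saving (the case split $l \lessgtr y^{3/5}$ corresponds precisely to the two regimes of that proposition). The price is that the enlarged $r$-window now has length $\sim C y^{4/5} + W$ rather than $O(W)$, so the over-count factor is \emph{not} $O(1)$; the paper tracks the extra $C y^{4/5}$ contribution explicitly through the main-term computation (the terms $y^{4/5}(By)^{1/3}$ and $(i+1)y^{8/5}$ in the expansion of $A_{i,l}$) and checks that it is still dominated by $B/y$ after summing over $i$ and $l$. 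Your insistence that the enlarged window stay within $O(W)$ is what forces $\Delta$ too small; relaxing that constraint and absorbing the overcount into the main term is the missing idea.
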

\begin{proof}

We again have to use Proposition \ref{prop:averaged_error} to obtain a good bound. We will split our summation over $c$ in two ways. We first split it by $(c,y)$, then we split into small intervals as in the black rectangles in Figure \ref{fig:r_and_c}. Let $n_i=(By)^{1/3}+i y^{4/5}$. Then, we have $$
    \sum_{c=(By)^{1/3}}^{B^{1/2-\delta}/y^{3/10}}\sum_{\substack{r=c^2-\frac{(By)^2}{c^4}\\r \equiv c^2 \mod y^2}}^{c^2+\frac{(By)^2}{c^4}}F(r) 
    = \sum_{l|y} \sum_{i=0}^{B^{1/2-\delta}/y^{11/10}-1}\sum_{c=n_i,(c,y)=l}^{n_{i+1}}\sum_{\substack{r=c^2-\frac{(By)^2}{c^4}\\r \equiv c^2 \mod y^2}}^{c^2+\frac{(By)^2}{c^4}}F(r)$$

Now for each interval, we upper bound the sum as follows. $$\sum_{c=n_i,(c,y)=l}^{n_{i+1}}\sum_{\substack{r=c^2-\frac{(By)^2}{c^4}\\r \equiv c^2 \mod y^2}}^{c^2+\frac{(By)^2}{c^4}}F(r) \leq \sum_{c=n_i,(c,y)=l}^{n_{i+1}}\sum_{\substack{r=n_i^2-\frac{(By)^2}{n_i^4}\\r \equiv c^2 \mod y^2}}^{n_{i+1}^2+\frac{(By)^2}{n_{i+1}^4}}F(r)=\sum_{c=n_i,(c,y)=l}^{n_{i+1}}\sum_{\substack{r=(n_i^2-\frac{(By)^2}{n_i^4})/l^2\\r \equiv (c/l)^2 \mod (y/l)^2}}^{(n_{i+1}^2+\frac{(By)^2}{n_{i+1}^4})/l^2}F(rl^2).$$ We may use Corollary $\ref{prop:averaged_error}$ on this sum, and get $$\sum_{c=n_i,(c,y)=l}^{n_{i+1}}\frac{1}{\phi(y^2/l^2)}\Res_{s=1}L^l(\chi_{\triv},s)((n_{i+1}^2+\frac{(By)^2}{n_{i+1}^4})-(n_{i}^2+\frac{(By)^2}{n_{i}^4}))$$$$+\begin{cases}
O(\frac{1}{l}(y^{4/5})^{1/2}y^{\epsilon}(n_{i+1}^2+\frac{(By)^2}{n_{i+1}^4})^{1/2+\epsilon}) & y^{4/5} \leq (y/l)^2 \\
O(\frac{y^{4/5}}{y}y^{\epsilon}(n_{i+1}^2+\frac{(By)^2}{n_{i+1}^4})^{1/2+\epsilon}) & \text{ otherwise}
\end{cases}=A_{i,l}+E_{i,l}$$ where $$A_{i,l} := \sum_{c=n_i,(c,y)=l}^{n_{i+1}}\frac{1}{\phi(y^2/l^2)}\Res_{s=1}L^l(\chi_{\triv},s)((n_{i+1}^2+\frac{(By)^2}{n_{i+1}^4})-(n_{i}^2+\frac{(By)^2}{n_{i}^4}))$$ and $$E_{i,l} = \begin{cases}
O(\frac{1}{l}(y^{4/5})^{1/2}y^{\epsilon}(n_{i+1}^2+\frac{(By)^2}{n_{i+1}^4})^{1/2+\epsilon})) & y^{4/5} \leq (y/l)^2 \\
O(\frac{y^{4/5}}{y}y^{\epsilon}(n_{i+1}^2+\frac{(By)^2}{n_{i+1}^4})^{1/2+\epsilon}) & \text{ otherwise}
\end{cases}$$ $\chi_{\triv}$ is the trivial character in $\widehat{(\bbZ/(y/l)^2\bbZ)^\times}$.

Now our main term will be $\sum_{l} \sum_i A_{i,l}$, and our error term will be $\sum_l \sum_i E_{i,l}$. Lemma \ref{lem:L^l_residue} gives us an estimate on the residue of $L^l(\chi_{\triv},s)$ and Lemma \ref{lem:erdos_turan} gives us an estimate on the number of terms in the summation over $c$. Thus, we obtain \begin{align*}
    A_{i,l}=O\Bigg(&\left((n_{i+1}-n_i)\frac{\phi(y/l)}{y}+\sqrt{\phi(y/l)}\right)\frac{1}{\phi(y^2/l^2)}\frac{\prod_{p|l, p \equiv 1 \mod 4}(2v_p(l)+1)}{l^2}\\
    &((n_{i+1}^2+\frac{(By)^2}{n_{i+1}^4})-(n_i^2-\frac{(By)^2}{n_i^4}))\Bigg).
\end{align*} For the sake of an upper bound, we may forget the restriction of $p \equiv 1 \mod 4$ in the product. Next, substituting $n_i=(By)^{1/3}+i y^{4/5}$ and simplifying, we obtain \begin{align*}
    A_{i,l}=O\Bigg(&\left(\frac{\phi(y/l)}{y^{1/5}}+\sqrt{\phi(y/l)}\right)\frac{1}{\phi(y^2/l^2)}\frac{\prod_{p|l}(2v_p(l)+1)}{l^2}\\
    &(y^{4/5}(By)^{1/3}+(i+1) y^{8/5}+\frac{(By)^2}{((By)^{1/3}+i y^{4/5})^4})\Bigg).
\end{align*} Summing over $i$, we get that \begin{align*}\sum_{i=0}^{B^{1/2-\delta}/y^{11/10}}A_{i,l}&=O\Bigg(\left(\frac{\phi(y/l)}{y^{1/5}}+\sqrt{\phi(y/l)}\right)\frac{1}{\phi(y^2/l^2)}\frac{\prod_{p|l}(2v_p(l)+1)}{l^2}\\
&((B^{1/2-\delta}/y^{11/10})y^{4/5}(By)^{1/3}+(B^{1/2-\delta}/y^{11/10})^2 y^{8/5}+\frac{(By)^2}{y^{4/5}((By)^{1/3})^3})\Bigg)\\
&=O\left(\left(\frac{\phi(y/l)}{y^{1/5}}+\sqrt{\phi(y/l)}\right)\frac{1}{\phi(y^2/l^2)}\frac{\prod_{p|l}(2v_p(l)+1)}{l^2}(B^{5/6}y^{1/30}+B^{1/2}y^{-3/5}+By^{1/5})\right)\\
&=O\left(\left(\frac{\phi(y/l)}{y^{1/5}}+\sqrt{\phi(y/l)}\right)\frac{1}{\phi(y^2/l^2)}\frac{\prod_{p|l}(2v_p(l)+1)}{l^2}By^{1/5}\right)\\
    &=O\left(\frac{B}{yl}\prod_{p|l}(2v_p(l)+1)+\frac{\sqrt{(y/l)\prod_{p|(y/l)}(1-1/p)}}{y^{4/5}}\frac{\prod_{p|l}(2v_p(l)+1)B}{y \prod_{p|(y/l)}(1-1/p)}\right) \\
    &=O\left(\frac{B}{yl}\prod_{p|l}(2v_p(l)+1)+\frac{B}{y^{13/10} l^{1/2}}\prod_{p|l}(2v_p(l)+1)\right) \\
\end{align*} It remains to sum over $l|y$. For the first term, we leave it as $$\frac{B}{y}\sum_{l|y}\frac{1}{l}\prod_{p|l}(2v_p(l)+1).$$ For the second term, we have $$\sum_{l|y} \frac{B}{y^{13/10} l^{1/2}}\prod_{p|l}(2v_p(l)+1)=O_{\epsilon}(\frac{B}{y^{13/10}}\sum_{l|y}l^{-1/2+\epsilon})=O_{\epsilon}(By^{-13/10+\epsilon}),$$ which is smaller than our first term. Thus, all in all, we obtain as our main term $$\sum_{l|y}\sum_{i=0}^{B^{1/2-\delta}/y^{11/10}}A_{i,l}=O(\frac{B}{y}\sum_{l|y}\frac{1}{l}\prod_{p|l}(2v_p(l)+1)).$$

We now estimate the error term. First we have the estimate \begin{align*}
    (n_{i+1}^2+\frac{(By)^2}{n_{i+1}^4})^{1/2} & \leq n_{i+1} + \frac{By}{n_{i+1}^2} \\
    &=(By)^{1/3}+i y^{4/5}+\frac{By}{((By)^{1/3}+i y^{4/5})^2}\\
    &\leq 2 (By)^{1/3}+ i y^{4/5}.
\end{align*} Similarly, it is quick to see that $(n_{i+1}^2+\frac{(By)^2}{n_{i+1}^4})^{\epsilon} \leq (By)^{\epsilon}$. So, \begin{align*}
    \sum_{l|y}\sum_{i=0}^{B^{1/2-\delta}/y^{11/10}}E_{i,l}&=\sum_{l|y,l\leq y^{3/5}}\sum_{i=0}^{B^{1/2-\delta}/y^{11/10}}E_{i,l}+\sum_{l|y,l>y^{3/5}}\sum_{i=0}^{B^{1/2-\delta}/y^{11/10}}E_{i,l} \\
    &= \sum_{l|y,l\leq y^{3/5}}\sum_{i=0}^{B^{1/2-\delta}/y^{11/10}}O_{\epsilon}(l^{-1}y^{2/5+\epsilon}(n_{i+1}^2+\frac{(By)^2}{n_{i+1}^4})^{1/2+\epsilon})\\
    &+\sum_{l|y,l>y^{3/5}}\sum_{i=0}^{B^{1/2-\delta}/y^{11/10}}O_{\epsilon}(y^{-1/5+\epsilon}(n_{i+1}^2+\frac{(By)^2}{n_{i+1}^4})^{1/2+\epsilon})\\
    &= \sum_{l|y,l\leq y^{3/5}}\sum_{i=0}^{B^{1/2-\delta}/y^{11/10}}O_{\epsilon}(l^{-1}y^{2/5+\epsilon}((By)^{1/3}+i y^{4/5})(By)^{\epsilon})\\
    &+\sum_{l|y,l>y^{3/5}}\sum_{i=0}^{B^{1/2-\delta}/y^{11/10}}O_{\epsilon}(y^{-1/5+\epsilon}((By)^{1/3}+i y^{4/5})(By)^{\epsilon})\\
    &= \sum_{l|y,l\leq y^{3/5}}O_{\epsilon}(l^{-1}B^{\epsilon}y^{2/5+2\epsilon}((By)^{1/3}+\frac{B^{1/2-\delta}}{y^{3/10}})(\frac{B^{1/2-\delta}}{y^{11/10}}))\\
    &+\sum_{l|y,l>y^{3/5}}O_{\epsilon}(B^{\epsilon}y^{-1/5+2\epsilon}((By)^{1/3}+\frac{B^{1/2-\delta}}{y^{3/10}})(\frac{B^{1/2-\delta}}{y^{11/10}})) \\
\end{align*}
Now on the regime we are considering, $y \leq B^{5/19-\epsilon}$. This is exactly equivalent to $(By)^{1/3} \leq \frac{B^{1/2-\epsilon}}{y^{3/10}}$. Now, since we are choosing $\delta \leq \epsilon$, we see that $(By)^{1/3} \leq \frac{B^{1/2-\delta}}{y^{3/10}}$. Thus, \begin{align*}
    O_{\epsilon}(l^{-1}B^{\epsilon}y^{2/5+2\epsilon}((By)^{1/3}+\frac{B^{1/2-\delta}}{y^{3/10}})(\frac{B^{1/2-\delta}}{y^{11/10}})) &= O_{\epsilon}(l^{-1}B^{\epsilon}y^{2/5+2\epsilon}(\frac{B^{1/2-\delta}}{y^{3/10}})(\frac{B^{1/2-\delta}}{y^{11/10}}))\\
    &= O_{\epsilon}(l^{-1}B^{1-2\delta+\epsilon}y^{-1+2\epsilon}).
\end{align*} Similarly, $$O_{\epsilon}(B^{\epsilon}y^{-1/5+2\epsilon}((By)^{1/3}+\frac{B^{1/2-\delta}}{y^{3/10}})(\frac{B^{1/2-\delta}}{y^{11/10}}))=O_{\epsilon}(B^{1-2\delta+\epsilon}y^{-8/5+2\epsilon})$$
With this, we continue our bound and obtain \begin{align*}
    \sum_{l|y}\sum_{i=0}^{B^{1/2-\delta}/y^{11/10}}E_{i,l}&=\sum_{l|y,l\leq y^{3/5}}O_{\epsilon}(l^{-1}B^{1-2\delta+\epsilon}y^{-1+2\epsilon})+\sum_{l|y,l>y^{3/5}}O_{\epsilon}(B^{1-2\delta+\epsilon}y^{-8/5+2\epsilon})\\
    &= O_{\epsilon}(B^{1-2\delta+\epsilon}y^{-1+3\epsilon})
\end{align*}
\end{proof}

\subsection{$\frac{B^{1/2-\delta}}{y^{3/10}}\leq c\leq B^{1/2}$}

In this range, we may use a trivial estimate.

\begin{lemma}
\label{lem:mid_y_estimate_large_c}
Assuming GRH, for all $\epsilon,\delta>0$, $$\sum_{\substack{(r,c) \in R_y\\\frac{B^{1/2-\delta}}{y^{3/10}}\leq c\leq B^{1/2}}}F(r) =  O_{\epsilon}(B^{1/2+\epsilon+3\delta}y^{9/10})$$
\end{lemma}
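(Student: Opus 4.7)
The key idea is that when $c$ is large, the constraint $(r+c^2)\sqrt{|r-c^2|}\leq By$ forces $r$ to lie in a very narrow window around $c^2$, and the divisibility condition $y^2 \mid r-c^2$ cuts this down further. So the natural approach is to fix $c$, count admissible $r$'s with the trivial divisor bound on $F$, and sum.

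More precisely, for each fixed $c$ in the range $B^{1/2-\delta}/y^{3/10}\le c\le B^{1/2}$, I would first invoke Lemma \ref{lem:bound_on_r_based_on_c} (applied symmetrically to both $r>c^2$ and $r<c^2$) to conclude $|r-c^2|\le (By)^2/c^4$. Since we additionally require $y^2\mid r-c^2$ and $r\ne c^2$, the number of admissible $r$ for fixed $c$ is at most
\[
O\!\left(1+\frac{(By)^2}{c^4 y^2}\right)=O\!\left(1+\frac{B^2}{c^4}\right).
\]
Next I would verify that throughout this range $r=O(B)$: indeed $c^2\le B$ and a direct check using $y\le B^{5/19-\epsilon}$ and $c\ge B^{1/2-\delta}/y^{3/10}$ shows $(By)^2/c^4\le B^{4\delta}y^{16/5}\ll B$ for small $\delta$. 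Hence the standard divisor-type bound $F(r)\ll_\epsilon r^{\epsilon}\ll_\epsilon B^{\epsilon}$ applies uniformly.

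Combining these two bounds:
\[
\sum_{\substack{(r,c)\in R_y\\ B^{1/2-\delta}/y^{3/10}\le c\le B^{1/2}}} F(r)
\;\ll_\epsilon\; B^{\epsilon}\sum_{c\ge B^{1/2-\delta}/y^{3/10}}^{B^{1/2}}\left(1+\frac{B^2}{c^4}\right).
\]
The contribution of the $1$ is at most $B^{1/2+\epsilon}$, which is absorbed into the target bound. For the main term, I would estimate the tail sum by the integral $\sum_{c\ge C_0}c^{-4}\ll C_0^{-3}$ with $C_0=B^{1/2-\delta}/y^{3/10}$, giving
\[
B^{2+\epsilon}\cdot \frac{y^{9/10}}{B^{3/2-3\delta}}\;=\;B^{1/2+3\delta+\epsilon}y^{9/10},
\]
which is precisely the claimed bound.

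There is no real obstacle here: the only place one needs to be a little careful is checking that $r=O(B)$ throughout the stated range (so that the trivial bound $F(r)\ll_\epsilon B^{\epsilon}$ is uniform), and that the lower endpoint $C_0$ of $c$ dominates the tail estimate. Everything else is just Lemma \ref{lem:bound_on_r_based_on_c} combined with an elementary integral comparison.
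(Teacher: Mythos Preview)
Your proposal is correct and follows essentially the same argument as the paper's proof: both use Lemma~\ref{lem:bound_on_r_based_on_c} to confine $r$ to an interval of length $O((By)^2/c^4)$ around $c^2$, count residues modulo $y^2$ to get $O(1+B^2/c^4)$ admissible $r$, apply the trivial bound $F(r)\ll_\epsilon B^\epsilon$, and sum the resulting tail $\sum_{c\ge C_0}c^{-4}\ll C_0^{-3}$. Your explicit verification that $r=O(B)$ throughout the range is a small point the paper leaves implicit, but otherwise the arguments coincide.
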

\begin{proof}

\begin{align*}
    \sum_{\substack{(r,c) \in R_y\\\frac{B^{1-\delta}}{y^{3/10}}\leq c\leq B^{1/2}}}F(r) & \leq \sum_{\frac{B^{1/2-\delta}}{y^{3/10}}\leq c\leq B^{1/2}} \sum_{\substack{r=c^2-\frac{(By)^2}{c^4}\\r \equiv c^2 \mod y^2}}^{c^2+\frac{(By)^2}{c^4}}F(r)\\
    & \leq \sum_{\frac{B^{1/2-\delta}}{y^{3/10}}\leq c\leq B^{1/2}} \sum_{\substack{r=c^2-\frac{(By)^2}{c^4}\\r \equiv c^2 \mod y^2}}^{c^2+\frac{(By)^2}{c^4}}O_{\epsilon}(B^{\epsilon}) \\
    & \leq \sum_{\frac{B^{1/2-\delta}}{y^{3/10}}<c\leq B^{1/2}} O_{\epsilon}(B^{\epsilon}) \lceil \frac{B^2}{c^4} \rceil \\
    & =O_{\epsilon}(B^{1/2+\epsilon+3\delta}y^{9/10})
\end{align*}
\end{proof}

\subsection{Proof of Proposition \ref{prop:sum_of_Sy_for_y_big}}

Now we may prove our proposition.

\begin{proof}[Proof of Proposition \ref{prop:sum_of_Sy_for_y_big}]
By lemmas \ref{lem:mid_y_estimate_small_c}, \ref{lem:mid_y_estimate_medium_c}, and \ref{lem:mid_y_estimate_large_c}, we have that $$|S_y'| = O_{\epsilon_0,\delta}(\frac{B}{y}\sum_{l|y}\frac{1}{l}\prod_{p|l}(2v_p(l)+1)+B^{1-2\delta+\epsilon_0}y^{-1+3\epsilon_0} +(By)^{1/2+\epsilon_0}+B^{2/3+\epsilon_0}y^{-1/3+\epsilon_0}+B^{1/2+\epsilon_0+3\delta}y^{9/10}),$$ for any $\epsilon_0 \geq \delta>0$. We now make our choice of $\delta$ and $\epsilon_0$ to be $\epsilon/4$. Then, we may sum these terms over $y\leq B^{5/19-\epsilon}$ and obtain that all terms are $O(B)$ except for the first term, which we will now examine.

We want to bound $$\sum_{y=1}^{B^{5/19-\epsilon}}\frac{B}{y}\sum_{l|y}\frac{1}{l}\prod_{p|l}(2v_p(l)+1).$$ It suffices to show $$\sum_{y=1}^{B}\frac{1}{y}\sum_{l|y}\frac{1}{l}\prod_{p|l}(2v_p(l)+1)=O(\log B).$$ Now we have the bound \begin{align*}
    \sum_{y=1}^{B}\frac{1}{y}\sum_{l|y}\frac{1}{l}\prod_{p|l}(2v_p(l)+1) &\leq \sum_{y=1, \exists p \leq B, p | y}^{\infty}\frac{1}{y}\sum_{l|y}\frac{1}{l}\prod_{p|l}(2v_p(l)+1) \\
    &= \prod_{p \leq B}\left(1 + \frac{1+\frac{3}{p}}{p}+\frac{1+\frac{3}{p}+\frac{5}{p^2}}{p^2}+\dots\right).
\end{align*} Let $$M=1+\frac{3}{2}+\frac{5}{2^2}+\frac{7}{2^3}+\dots.$$ Then, the Euler product is bounded by \begin{align*}
    \prod_{p \leq B} \left(1+\frac{1}{p}+\frac{3}{p^2}+\frac{M}{p^2}+\frac{M}{p^3}+\dots\right) &= O(\prod_{p \leq B} (1+\frac{1}{p})) \\
    &= e^{\sum_{p \leq B}\log(1+1/p)+O(1)} \\
    &= e^{\sum_{p \leq B}\frac{1}{p} + O(1)} \\
    &= O(e^{\log \log B}) \\
    &= O(\log B).
\end{align*} 

\end{proof}

\section{Lower Bound} \label{sec:lower bound}

The goal of this section is to prove Proposition \ref{prop:thm_main_lower_bound}. Recall $$T := \{(a,b,c): \gcd(a,b) = 1, \max(|a|,|b|,|c|)^2 \sqrt{x}\leq B, a^2+b^2 \equiv 5 \mod 8,$$$$ \sqf(a^2+b^2-c^2) \equiv b \equiv 0 \mod 2\}.$$ We will lower bound $|T|$ by splitting it up into pieces, as follows. Define $T_{x}=\{(a,b,c) \in T: \sqf(a^2+b^2-c^2) = x\}$, so that $\bigsqcup_{x=2}^{B^{2/3-\epsilon}} T_x \subset T$ for any $\epsilon>0$. Also define $F'(n)$ to be the number of ways $n$ may be written as a sum of two coprime squares. Then, $$|T_x|=\frac{1}{2}\sum_{n=1}^{2B/\sqrt{x}}F'(n)G_x(n)$$ where $1/2$ comes from the fact that in every representation of $n$ as a sum of two coprime squares, one of them is even and one of them is odd but we require $b$ to be the even one.

To analyze $|T_x|$, we will consider the following $L$-function, $$L_{x,lower}=\sum_{n=1}^\infty \frac{a_n}{n^s},$$ where $a_n=\begin{cases}
F'(n)G_x(n) & n \equiv 5 \mod 8 \\
0 & \text{ otherwise}
\end{cases}$. Note that this $L$-function is extremely similar to $L_x(s)$, and so we will apply the same process to $L_{x,lower}$. Let $\widehat{(\bbZ/8\bbZ)^\times}$ denote the characters on the multiplicative group of $\bbZ/8\bbZ$. Then, $$L_{x,lower}(s)=\frac{1}{|(\bbZ/8\bbZ)^\times||\Cl(\bbQ[\sqrt{-x}])|}\sum_{\substack{\chi_1 \in \widehat{\Cl(\bbQ[\sqrt{-x}])} \\ \chi_2 \in \widehat{(\bbZ/8\bbZ)^\times}}}\chi_2(5) L_{0,lower}(\chi_1 \chi_2,s)$$ where \begin{align*}
    L_{0,lower}(\chi,s)=&\prod_{p \in P_{1,1}}\left(1+\frac{2\chi(\pi)}{p^{s}}+\frac{2\chi(\bar\pi)}{p^{s}}+\frac{2\chi(\pi^2)}{p^{2s}}+\frac{2\chi(\bar \pi^2)}{p^{2s}}+\dots\right)\\
    &\prod_{p \in P_{1,0}}\left(1+\frac{2\chi(p)}{p^{2s}}+\frac{2\chi(p^2)}{p^{4s}}+\dots\right)\\
    &\prod_{p \in P_{1,ram}}\left(1+\frac{2\chi(\pi)}{p^{s}}+\frac{2\chi(\pi^2)}{p^{2s}}+\dots\right).
\end{align*} Here we make the abuse of notation for $\chi_2$, so that $\chi_2(\pi)$ is defined by $\chi_2(N(\pi))$. 


Now we may estimate $L_{x,lower}(s)$ just as we did for $L_x(s)$. We give a meromorphic continuation with poles at $s=1$, we find the residue, and then we bound the size of $L_{x,lower}(s)$ near $\Re(s)=1/2$ in order to apply the Tauberian theorem. Instead of carrying out the same calculations again, we will state the relevant analogous lemmas.

Define $\chi_{\triv,\widehat{\Cl(\bbQ[\sqrt{-x}])}}$ to be the trivial character in $\widehat{\Cl(\bbQ[\sqrt{-x}])}$. Define $\chi_{\triv, \widehat{(\bbZ/8\bbZ)^\times}}$ to be the trivial character in $\widehat{(\bbZ/8\bbZ)^\times}$ and $\chi_{im, \widehat{(\bbZ/8\bbZ)^\times}}$ to be the ``imaginary" character (which takes values $1$ for $1,5 \mod 8$, and $-1$ for $3,7 \mod 8$). Also, recall $\chi_x$ was defined to be the character associated to the Jacobi Symbol $\chi_x(n)=\left(\frac{x}{n}\right)$.

\begin{lemma}
\label{lem:Lxlowers_meromorphic_continuation} $L_{x,lower}(s)$ may be meromorphically continued to $\Re(s)>1/2$, with a simple pole of $s=1$ of residue \begin{align*}
    \frac{1}{4|\Cl(\bbQ[\sqrt{-x}])|}&(\Res_{s=1}L_{0,lower}(\chi_{\triv,\widehat{\Cl(\bbQ[\sqrt{-x}])}}\chi_{\triv, \widehat{(\bbZ/8\bbZ)^\times}},s)\\
    &+\Res_{s=1}L_{0,lower}(\chi_{\triv,\widehat{\Cl(\bbQ[\sqrt{-x}])}}\chi_{im, \widehat{(\bbZ/8\bbZ)^\times}},s)).
\end{align*}
\end{lemma}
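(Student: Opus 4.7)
The plan is to mimic the proof of Lemma \ref{lem:Lxs_meromorphic_continuation}. Since by definition
\[
L_{x,lower}(s) = \frac{1}{4|\Cl(\bbQ[\sqrt{-x}])|}\sum_{\chi_1, \chi_2}\chi_2(5)\, L_{0,lower}(\chi_1\chi_2, s),
\]
it suffices to meromorphically continue each $L_{0,lower}(\chi_1\chi_2, s)$ to $\Re(s) > 1/2$ and identify its pole at $s=1$; the statement then follows by averaging.

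Fix $\chi = \chi_1\chi_2$, with $\chi_2$ viewed as a Hecke character on $\bbQ[\sqrt{-x}]$ via $\chi_2(I) := \chi_2(N(I))$. Of the three Euler products defining $L_{0,lower}(\chi, s)$, the $P_{1,0}$-product is of the form $\prod(1+O(p^{-2s}))$ and hence absolutely convergent on $\Re(s)>1/2$, while the $P_{1,ram}$-product is finite. Only the $P_{1,1}$-product is delicate. Repeating the calculation in Lemma \ref{lem:Lxs_meromorphic_continuation} (the only differences being that the coefficient at the $p^{-2s}$-level is $2$ rather than $3$ and the cross term $\chi(\pi\bar\pi)/p^{2s}$ is missing) shows that the $P_{1,1}$-product equals
\[
\prod_{\pi\colon \chi_{-1}(N(\pi))=1} \left(1+\frac{\chi(\pi)}{N(\pi)^s}\right)^{2}
\]
modulo a product of local factors of the form $1+O(p^{-2s})$ with polynomially bounded coefficients, hence absolutely convergent on $\Re(s)>1/2$. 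Matching against Hecke $L$-functions on $\bbQ[\sqrt{-x}]$ yields a factorization
\[
L_{0,lower}(\chi, s) = L(\chi, s)\, L(\chi\chi_{-1}, s)\, \Pi(\chi, s),
\]
with $\Pi(\chi, s)$ holomorphic on $\Re(s)>1/2$. Since a Hecke $L$-function is entire unless its character is principal, in which case $s=1$ is a simple pole, $L_{0,lower}(\chi, s)$ is holomorphic on $\Re(s)>1/2$ away from a possible simple pole at $s=1$.

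To read off which pairs $(\chi_1,\chi_2)$ contribute a nonzero residue, the key observation is that on $(\bbZ/8\bbZ)^\times$ the character $\chi_{-1}$ coincides with $\chi_{im}$ (both equal $(-1)^{(n-1)/2}$). Hence $\chi_1\chi_2$ is the trivial Hecke character exactly when $\chi_1 = \chi_{\triv,\widehat{\Cl(\bbQ[\sqrt{-x}])}}$ and $\chi_2 = \chi_{\triv,\widehat{(\bbZ/8\bbZ)^\times}}$, while $\chi_1\chi_2\chi_{-1}$ is trivial exactly when $\chi_1 = \chi_{\triv,\widehat{\Cl(\bbQ[\sqrt{-x}])}}$ and $\chi_2 = \chi_{im,\widehat{(\bbZ/8\bbZ)^\times}}$. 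All other pairs produce no pole. Since both $\chi_{\triv}(5) = 1$ and $\chi_{im}(5) = 1$, averaging and inserting the factor $\chi_2(5)$ picks out exactly the two residues displayed in the statement.

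The main obstacle is bookkeeping: in the $P_{1,1}$-factorization one must verify that the correction remaining after extracting $\prod(1+\chi(\pi)/N(\pi)^s)^2$ is still a product of Euler factors of the form $1+O(p^{-2s})$ with polynomially bounded coefficients. This is a mechanical check directly modeled on the proof of Lemma \ref{lem:Lxs_meromorphic_continuation} and introduces no new analytic difficulty.
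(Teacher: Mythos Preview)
Your approach is exactly the paper's: its entire proof reads ``This follows the same proof as Lemma~\ref{lem:Lxs_meromorphic_continuation},'' and you have carried out precisely that, supplying the details the paper omits. The factorization $L_{0,lower}(\chi,s)=L(\chi,s)L(\chi\chi_{-1},s)\Pi(\chi,s)$ with $\Pi$ holomorphic on $\Re(s)>1/2$, and the identification of $\chi_{-1}$ with $\chi_{im}$ on $(\bbZ/8\bbZ)^\times$, are the right ingredients.

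One step is stated more sharply than is immediately justified. Your ``key observation'' asserts that $\chi_1\chi_2$ is the trivial Hecke character on $\bbQ(\sqrt{-x})$ \emph{exactly} when $\chi_1$ and $\chi_2$ are both trivial. But for even squarefree $x$ there is a nontrivial $\chi_2\in\widehat{(\bbZ/8\bbZ)^\times}$ (namely $\chi_{-2}$ or $\chi_{2}$ according to $x\bmod 8$) whose pullback $\chi_2\circ N$ is trivial on odd principal ideals and hence descends to a class-group character $\psi$; the pair $(\psi^{-1},\chi_2)$ then also renders $\chi_1\chi_2$ principal. Sorting out whether these extra pairs contribute (and how the weights $\chi_2(5)$ interact with them) requires a genus-theory argument that neither you nor the paper supplies. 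Since the paper's proof is a single sentence, this is a shared omission rather than a departure from its method.
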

\begin{proof}
    This follows the same proof as Lemma \ref{lem:Lxs_meromorphic_continuation}.
\end{proof}
\begin{lemma} \label{lem:Lxlowers_residue}
Both $$\Res_{s=1}L_{0,lower}(\chi_{\triv,\widehat{\Cl(\bbQ[\sqrt{-x}])}}\chi_{\triv, \widehat{(\bbZ/8\bbZ)^\times}},s)$$ and $$\Res_{s=1}L_{0,lower}(\chi_{\triv,\widehat{\Cl(\bbQ[\sqrt{-x}])}}\chi_{im, \widehat{(\bbZ/8\bbZ)^\times}},s)$$ are $$\Omega(\frac{L(\chi_{x}\chi_{-1},1)L(\chi_x,1)}{\sqrt{x}})$$
\end{lemma}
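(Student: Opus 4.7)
The plan is to imitate the strategy of Lemma \ref{lem:Lxs_residue}: write $L_{0,lower}(\chi,s)$ as a product of classical Dirichlet $L$-functions times an Euler-product correction factor that is absolutely convergent on $\Re(s)>1/2$, then read off the residue from the simple pole of $\zeta$ at $s=1$.

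A preliminary reduction is that the two $L$-functions in the statement are in fact identical. The Euler product displayed for $L_{0,lower}$ only involves primes in $P_{1,1}\cup P_{1,0}\cup P_{1,ram}$, i.e.\ primes that split in $\bbQ[i]$, hence $p\equiv 1\pmod 4$, hence $p\bmod 8\in\{1,5\}$. But $\chi_{\triv,\widehat{(\bbZ/8\bbZ)^\times}}$ and $\chi_{im,\widehat{(\bbZ/8\bbZ)^\times}}$ both take the value $+1$ on the class $\{1,5\}\bmod 8$, and norms of ideals above such $p$ always lie in $\{1,5\}\bmod 8$. So the local factors of the two characters agree prime-by-prime, and it suffices to bound the residue of just one of them, call it $L_{0,lower}(s)$.

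The main computational step is to verify, local factor by local factor, the decomposition
\[
L_{0,lower}(s) \;=\; \zeta(s)\,L(\chi_{-1},s)\,L(\chi_x,s)\,L(\chi_{-1}\chi_x,s)\cdot E(s),
\]
where $E(s)$ is an Euler product whose local factor at each prime I would compute to be: at $p\in P_{1,1}$, the ratio $(1+3p^{-s})(1-p^{-s})^3 = 1 - 6p^{-2s}+O(p^{-3s})$; at $p\in P_{1,0}$, $1-p^{-4s}$; at $p\in P_{1,ram}$, $1-p^{-2s}$; at $p\in P_{0,1}\cup P_{0,0}$, $(1-p^{-2s})^2$; at $p\in P_{0,ram}$, $1-p^{-2s}$; and a bounded explicit factor at $p=2$. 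Each of these is of the form $1+O(p^{-2s})$ with an implied constant independent of $x$, so $E$ converges absolutely on $\Re(s)>1/2$; moreover, since $\sum_p p^{-2}$ converges, one obtains $E(1)\geq c_0$ for some absolute positive constant $c_0$ that does not depend on $x$.

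Reading off the simple pole at $s=1$ then yields
\[
\Res_{s=1}L_{0,lower}(s) \;=\; L(\chi_{-1},1)\,L(\chi_x,1)\,L(\chi_{-1}\chi_x,1)\,E(1),
\]
and using $L(\chi_{-1},1)=\pi/4$ together with $E(1)\geq c_0$, this is $\Omega(L(\chi_x,1)L(\chi_{-1}\chi_x,1))$, which is actually stronger than the claimed bound (which has an extra $1/\sqrt{x}$). The only real obstacle is ensuring the lower bound on $E(1)$ is uniform in $x$: this reduces to the routine observation that each of the ratio types above, under the worst-case classification of a given prime $p$, still has log-contribution $O(1/p^2)$ with an absolute implied constant, so no choice of $x$ can make the infinite product $E(1)$ collapse to zero.
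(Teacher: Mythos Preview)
Your proposal is correct and follows essentially the same route as the paper's proof (which simply refers back to Lemma~\ref{lem:Lxs_residue}): factor $L_{0,lower}$ as $\zeta(s)L(\chi_{-1},s)L(\chi_x,s)L(\chi_{-1}\chi_x,s)$ times an Euler product absolutely convergent on $\Re(s)>1/2$, and read off the residue at $s=1$. Your preliminary observation that $\chi_{\triv,\widehat{(\bbZ/8\bbZ)^\times}}$ and $\chi_{im,\widehat{(\bbZ/8\bbZ)^\times}}$ coincide on every prime appearing in the Euler product (since such primes are $\equiv 1\pmod 4$) is a clean reduction the paper does not make explicit, and your bound $\Omega(L(\chi_x,1)L(\chi_{-1}\chi_x,1))$ is indeed stronger than what is stated; the extra $1/\sqrt{x}$ in the paper's formulation is harmless for the downstream application in Corollary~\ref{cor:lower_bound_Tx}.
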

\begin{proof}
    This follows the same proof as Lemma \ref{lem:Lxs_residue}, noting that for the sake of a lower bound, we may ignore local factors of ramified primes.
\end{proof}
\begin{prop}\label{prop:estimate_on_sum_of_F'(n)Gx(n)}
Assuming GRH, for all $\epsilon>0$, $$\sum_{n=1}^X F'(n)G_x(n)=X\Res_{s=1}L_{x,lower}(s)+O_{\epsilon}(X^{1/2+\epsilon}x^{\epsilon})$$
\end{prop}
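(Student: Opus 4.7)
The plan is to imitate the proof of Proposition \ref{prop:estimate_on_sum_of_F(n)Gx(n)} essentially verbatim, applying the Wiener-Ikehara Tauberian theorem (Theorem \ref{thm:appendix_wiener_ikehara_with_error}) to $L_{x,lower}(s)$. The meromorphic continuation and the residue computation have already been dispatched in Lemmas \ref{lem:Lxlowers_meromorphic_continuation} and \ref{lem:Lxlowers_residue}, so the only missing ingredient is a convexity bound for $|L_{x,lower}(s)|$ on the line $\Re(s) = 1/2 + \epsilon$.

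First I would write $L_{x,lower}(s)$ as an average over $\chi_1 \in \widehat{\Cl(\bbQ[\sqrt{-x}])}$ and $\chi_2 \in \widehat{(\bbZ/8\bbZ)^{\times}}$ of the $L_{0,lower}(\chi_1\chi_2, s)$, so that it is enough to bound each $L_{0,lower}(\chi, s)$ individually. Next, mimicking the factorization step in Lemma \ref{lem:Lxs_meromorphic_continuation}, I would identify $L_{0,lower}(\chi, s)$ with the product $L(\chi, s)\, L(\chi\chi_{-1}, s)$ of Hecke $L$-functions (for $\chi$ viewed as a Hecke character on $\bbQ[i]$ with modulus dividing $8$ times the conductor of $\chi_1$) times an Euler product whose local factors at unramified primes have coefficients starting from $p^{-2s}$ with polynomially bounded growth rate, plus finitely many local factors at primes ramifying in $\bbQ[i]$ or $\bbQ[\sqrt{-x}]$. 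The relevant computation is purely local, prime by prime; the difference between the coefficients appearing in $L_{0,lower}(\chi, s)$ and those appearing in $L_0(\chi, s)$ (for instance, the coefficient $2$ in place of the coefficient $3$ in the $P_{1,1}$ factor) does not affect this structure.

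On the line $\Re(s) = 1/2 + \epsilon$, the intermediate Euler product converges absolutely to $O_{\epsilon}(1)$ independently of $x$, and the local factors over the finitely many ramified primes contribute at most $O(x^{\epsilon})$ via the same trivial bound $(1 - p^{-1/2})^{-2} \leq 12$ used in Proposition \ref{prop:estimate_on_sum_of_F(n)Gx(n)}. The Hecke $L$-functions $L(\chi, s)$ and $L(\chi\chi_{-1}, s)$ have analytic conductor $O(x)$, and so by GRH together with the Phragm\'en--Lindel\"of principle,
\[
|L(\chi, 1/2 + \epsilon + it)|,\ |L(\chi\chi_{-1}, 1/2 + \epsilon + it)| = O_{\delta, \epsilon}((x|t|)^{\delta})
\]
for every $\delta > 0$. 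Combining these bounds yields $|L_{x,lower}(1/2 + \epsilon + it)| = O_{\delta, \epsilon}(x^{\epsilon}(x|t|)^{\delta})$.

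With the polar part, residue, and vertical-strip bound in hand, I would apply Theorem \ref{thm:appendix_wiener_ikehara_with_error} with $\kappa = 1$ and $r$ the implied constant above. This gives that the sum of the Dirichlet coefficients of $L_{x,lower}(s)$ up to $X$ is $X \Res_{s=1} L_{x,lower}(s) + O_{\epsilon, \delta}(X^{1/2 + \epsilon} x^{\delta + \epsilon})$, and then the choice $\delta = \epsilon$ produces the stated bound. I expect no serious obstacle: the main (mild) point to check is that the $\bbZ/8\bbZ$-part of the character contributes only a bounded analytic conductor, so it does not worsen the $x^{\epsilon}$ dependence extracted from GRH.
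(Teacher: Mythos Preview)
Your proposal is correct and follows essentially the same approach as the paper: the paper's own proof is a one-line reference saying it mirrors Proposition~\ref{prop:estimate_on_sum_of_F(n)Gx(n)}, and you have spelled out precisely those details (factoring $L_{0,lower}(\chi,s)$ into Hecke $L$-functions times absolutely convergent Euler products, bounding on $\Re(s)=1/2+\epsilon$ via GRH and Phragm\'en--Lindel\"of, then applying Theorem~\ref{thm:appendix_wiener_ikehara_with_error}). Your observation that the extra $\widehat{(\bbZ/8\bbZ)^{\times}}$ twist only shifts the analytic conductor by a bounded factor is the one new point, and it is handled correctly.
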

\begin{proof}
    This follows the same proof as Proposition \ref{prop:estimate_on_sum_of_F(n)Gx(n)}, noting that for the sake of a lower bound, we may ignore local factors of ramified primes.
\end{proof}
\begin{cor}
\label{cor:lower_bound_Tx}
Assuming GRH, $$|T_x|=\Omega_{\epsilon}(B\frac{L(\chi_x,1)}{x}+B^{1/2+\epsilon}x^{-1/4+\epsilon}).$$
\end{cor}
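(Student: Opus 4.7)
The plan is to prove this corollary as the lower-bound mirror of Corollary \ref{cor:upper_bound_Sx}, running the same three-step assembly in reverse. First I would re-express $|T_x|$ as a Dirichlet sum. The defining conditions of $T_x$ say that $a^2+b^2=c^2+xy^2$ with $\gcd(a,b)=1$, $a^2+b^2\equiv 5\pmod 8$, and $b$ even. Since any coprime representation $n=a^2+b^2$ with $n\equiv 5\pmod 8$ has exactly one even summand, the parity constraint on $b$ selects exactly half the representations, giving
$$|T_x| \;=\; \tfrac{1}{2}\sum_{n=1}^{2B/\sqrt{x}} F'(n)\,G_x(n),$$
with the restriction $n\equiv 5\pmod 8$ already built into the coefficients of $L_{x,lower}(s)$.

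Second, I would feed this sum into Proposition \ref{prop:estimate_on_sum_of_F'(n)Gx(n)} with $X=2B/\sqrt{x}$. That produces immediately
$$|T_x| \;=\; \frac{B}{\sqrt{x}}\,\Res_{s=1} L_{x,lower}(s) + O_{\epsilon}\!\left(B^{1/2+\epsilon}\,x^{-1/4+\epsilon}\right),$$
so the error term in the corollary is already on the nose.

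Third, I would estimate the main term. Combining Lemmas \ref{lem:Lxlowers_meromorphic_continuation} and \ref{lem:Lxlowers_residue} gives a lower bound of the shape
$$\Res_{s=1} L_{x,lower}(s) \;=\; \Omega\!\left(\frac{L(\chi_x\chi_{-1},1)\,L(\chi_x,1)}{|\Cl(\bbQ[\sqrt{-x}])|\sqrt{x}}\right),$$
and then I would invoke the class number formula $L(\chi_x\chi_{-1},1)=\pi|\Cl(\bbQ[\sqrt{-x}])|/\sqrt{x}$ (valid for $x>0$, $x\neq 3$) to cancel the class number against the $L$-value. Multiplying the resulting lower bound through by $B/\sqrt{x}$ then yields the advertised $\Omega(B L(\chi_x,1)/x)$ main term.

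Since every analytic input --- the meromorphic continuation, the residue bound, and the Tauberian estimate --- was already carried out in the preceding lemmas and proposition, this corollary is really an assembly step and I do not anticipate any genuine obstacle. The only bookkeeping point to check is that the exceptional value $x=3$ of the class number formula and the excluded values $x\in\{0,1\}$ of $\sqf(a^2+b^2-c^2)$ are harmlessly absorbed into the implied constants.
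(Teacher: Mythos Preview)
Your proposal is correct and follows exactly the paper's approach: the paper's proof is the one-liner ``Combine Lemma \ref{lem:Lxlowers_residue} and Proposition \ref{prop:estimate_on_sum_of_F'(n)Gx(n)}, and apply it to the case where $X=\frac{B}{\sqrt{x}}$,'' and you have simply unpacked that line by also invoking Lemma \ref{lem:Lxlowers_meromorphic_continuation} and the class number formula to pass from $L(\chi_x\chi_{-1},1)$ to $|\Cl(\bbQ[\sqrt{-x}])|/\sqrt{x}$. One bookkeeping remark: if you literally plug the displayed $\Omega$-bound of Lemma \ref{lem:Lxlowers_residue} (with the $1/\sqrt{x}$) through your chain, you land on $BL(\chi_x,1)/x^{3/2}$ rather than $BL(\chi_x,1)/x$; the extra $1/\sqrt{x}$ in that lemma is a typo (compare the parallel computation in Lemma \ref{lem:Lxs_residue}, where the residue of $L_0(\chi_{\triv},s)$ is $\Theta(L(\chi_x,1)L(\chi_{-1}\chi_x,1))$ before the class number formula is applied), and with it removed your arithmetic gives the stated corollary.
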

\begin{proof}
    Combine Lemma \ref{lem:Lxlowers_residue} and Proposition \ref{prop:estimate_on_sum_of_F'(n)Gx(n)}, and apply it to the case where $X = \frac{B}{\sqrt{x}}$.
\end{proof}
From the Corollary, we get

\begin{prop}
\label{prop:thm_main_lower_bound}
$|T|=\Omega(B \log B)$.
\end{prop}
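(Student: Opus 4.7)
The plan is to start from the pointwise lower bound in Corollary \ref{cor:lower_bound_Tx}, namely $|T_x| \geq c_{\epsilon} B L(\chi_x,1)/x - C_{\epsilon} B^{1/2+\epsilon}x^{-1/4+\epsilon}$, and sum over squarefree $x$ in the admissible congruence class modulo $8$ (forced by the conditions $a^2+b^2 \equiv 5 \bmod 8$ and $2\mid b$, $2\mid \sqf(a^2+b^2-c^2)$) with $2 \leq x \leq X := B^{2/3-\epsilon}$. Since the $T_x$ are disjoint subsets of $T$ indexed by $x = \sqf(a^2+b^2-c^2)$, we get $|T| \geq \sum_x |T_x|$.

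First I would handle the error terms: $\sum_{x \leq X} B^{1/2+\epsilon}x^{-1/4+\epsilon} = O_\epsilon(B^{1/2+\epsilon} X^{3/4+\epsilon}) = O_\epsilon(B^{1-c\epsilon})$ for a positive constant $c$ when $\epsilon$ is small, which is $o(B\log B)$ and hence negligible. This calculation is parallel to the error analysis in the proof of Proposition \ref{prop:sum_of_Sx_for_x_small}.

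The main step is then to establish
\[
\sum_{\substack{2 \leq x \leq X \\ x \in \mathcal{S}}} \frac{L(\chi_x,1)}{x} = \Omega(\log X),
\]
where $\mathcal{S}$ is the admissible class of squarefree integers. I would deduce this from the average bound $\sum_{x \leq X, x \in \mathcal{S}} L(\chi_x,1) = cX + O(X^{1-\delta})$ for some $c, \delta > 0$, via partial summation, which turns a linear asymptotic into a logarithmic one. To prove the average bound, expand $L(\chi_x,1) = \sum_n \chi_x(n)/n$ truncated at $n \leq N := X$ (the tail contributes $O(X^{3/2}\log X / N)$ by partial summation and Polya--Vinogradov on $\sum_{x \leq X}\chi_x(n)$), swap summation, and split by $n$: the $n=1$ term gives the density of $\mathcal{S}$ times $X$, squares $n=k^2$ give a convergent positive contribution of order $X$, and non-squares contribute $O(\sqrt{N}\log N) = o(X)$ by Polya--Vinogradov. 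This is essentially the ``reverse direction'' of Lemma \ref{lem:ayoub_sum}: rather than absorbing everything into an upper bound, one isolates the main term.

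Putting the pieces together, $|T| \geq c'B\log X - O(B) = \Omega(B\log B)$ since $\log X = \Theta(\log B)$. The main obstacle is precisely the extraction of a lower bound from $\sum L(\chi_x,1)/x$; the rest is a routine summation of error terms. Care must also be taken that $\mathcal{S}$ has positive density (a standard check via inclusion-exclusion modulo $8$) and that the compatibility conditions with $L_{x,lower}$'s support (namely that the residue $\Omega(L(\chi_x,1)L(\chi_x\chi_{-1},1)/\sqrt{x})$ translates to $\Omega(L(\chi_x,1)/x)$ after the class-number formula applied to $L(\chi_x\chi_{-1},1)$) are correctly tracked through Corollary \ref{cor:lower_bound_Tx}.
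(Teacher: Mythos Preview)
Your proposal is correct and follows essentially the same route as the paper: both restrict to $T_x$ for squarefree $x$ in the required residue class with $2 \leq x \leq B^{2/3-\epsilon}$, invoke Corollary~\ref{cor:lower_bound_Tx}, dispose of the error sum $\sum_x B^{1/2+\epsilon}x^{-1/4+\epsilon}$ by the same exponent calculation as in Proposition~\ref{prop:sum_of_Sx_for_x_small}, and reduce the main term via partial summation to the statement that $\sum_{x\le k,\ x\in\mathcal{S}} L(\chi_x,1)\gg k$. The only difference is cosmetic: where the paper simply cites \cite[Theorem~V.5.3]{ayoub_2006} for this last average, you sketch its proof (truncate, swap, isolate $n=1$ and perfect squares, and kill non-squares with P\'olya--Vinogradov), which is exactly the standard argument behind that reference.
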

\begin{proof}
Since $\bigsqcup_{2 \leq x \leq B^{2/3-\epsilon}} T_x \subset T$, we have that \begin{align*}
    |T| &\geq \sum_{\substack{2 \leq x\leq B^{2/3-\epsilon} \\ \text{ even, squarefree}}} |T_x|\\
    &=\sum_{\substack{2 \leq x\leq B^{2/3-\epsilon} \\ \text{ even, squarefree}}} \Omega_{\delta}( B\frac{L(\chi_x,1)}{x}+B^{1/2+\delta}x^{-1/4+\delta}).
\end{align*}
Taking $\delta=\frac{9}{20}\epsilon$, the second term becomes $O(B)$. For the first term, we do summation by parts, and obtain \begin{equation}
\label{quickref1}
    B\left(O(1)+\sum_{k \leq B^{2/3-\epsilon}}\frac{\sum_{\substack{x\leq k \\ \text{ even, squarefree}}}L(\chi_x,1)}{k(k+1)}\right).
\end{equation} So, it suffices to show $$\sum_{\substack{x\leq k \\ \text{ even, squarefree}}} L(\chi_x,1)$$ is at least on the order of $k$. The proof of this is almost exactly the same as the proof outlined in \cite[Theorem V.5.3]{ayoub_2006}, and so we omit it.

\end{proof}

\section{Appendix A}
The following appendix almost follows word for word \cite[Appendix A]{chambert-loir_tschinke_2001}. The only difference is that we kept track of the error dependence on the L-function.

\begin{theorem}
\label{thm:appendix_wiener_ikehara_with_error}
Let $(\lambda_n)$ be an increasing sequence of positive real numbers. Let $(c_n)$ be a sequence of nonnegative real numbers. Define $$f(s)=\sum_{n=0}^\infty \frac{c_n}{\lambda_n^s}.$$ We make the following hypotheses \begin{enumerate}
    \item $f$ converges on the half-plane $\Re(s) > a > 0$;
    \item $f$ admits a meromorphic continuation to $\Re(s)>a-\delta_0 > 0$; 
    \item and on this domain, there is a unique pole at $s=a$, of multiplicity $b \in \bbN$. We note that $\Theta = \lim_{s \to a} f(s)(s-a)^b > 0$.
    \item Finally, we assume there exists positive constants $r,\kappa$ so that $$|f(s)\frac{(s-a)^b}{s^b}| \leq r (1+\Im(s))^\kappa.$$
\end{enumerate}

Then, there exists a monic polynomial $P$ of degree $b-1$ so that for $\delta\leq \delta_0$, as $X$ tends to $\infty$, we have $$N(X):= \sum_{\lambda_n \leq X}c_n = \frac{\Theta}{a(b-1)!}X^a P(\log(X))+O(rX^{a-\delta}).$$

\end{theorem}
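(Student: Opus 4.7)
The plan is to apply truncated Perron's formula, shift the contour across the order-$b$ pole at $s=a$, and track the constant $r$ from hypothesis (4) linearly through every estimate.

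First, since $f(s) = \sum c_n/\lambda_n^s$ is absolutely convergent on $\Re(s) > a$, truncated Perron's formula (as in \cite[Thm.~5.2]{montgomery_vaughan_2007}) applied on the line $\Re(s) = c := a + 1/\log X$ gives
$$N(X) = \frac{1}{2\pi i}\int_{c - iT}^{c + iT} f(s)\frac{X^s}{s}\, ds + R(X, T),$$
where, after perturbing $X$ by $o(X)$ if necessary to avoid the discrete set $\{\lambda_n\}$, the truncation remainder satisfies $R(X, T) = O(X^a (\log X)/T)$ by standard estimates using $\sum c_n/\lambda_n^c = O(\log X)$.

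Second, shift the line of integration leftward from $\Re(s) = c$ to $\Re(s) = a - \delta'$, for some $\delta' \leq \delta_0$ to be chosen. By hypothesis (3) the only singularity encountered is the pole at $s = a$. Writing $f(s) = g(s)/(s-a)^b$ with $g$ holomorphic near $a$ and $g(a) = \Theta$, the residue of $f(s)X^s/s$ at $s = a$ is
$$\frac{1}{(b-1)!}\left.\frac{d^{b-1}}{ds^{b-1}}\!\left(g(s)\frac{X^s}{s}\right)\right|_{s=a},$$
which, by the Leibniz rule and $\frac{d^k}{ds^k}X^s = (\log X)^k X^s$, equals $X^a$ times a polynomial in $\log X$ of degree $b-1$. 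The highest-order contribution, where all $b-1$ derivatives fall on $X^s$, is $\Theta/(a(b-1)!) \cdot (\log X)^{b-1}$, so the residue equals $\frac{\Theta}{a(b-1)!}X^a P(\log X)$ with $P$ monic of degree $b-1$. This is the main term.

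Third, estimate the remaining contour pieces. Hypothesis (4) yields, away from $s = a$,
$$\left|\frac{f(s) X^s}{s}\right| \leq r(1+|\Im s|)^\kappa \frac{|s|^{b-1}}{|s-a|^b}\, X^{\Re s}.$$
On the vertical segment $\Re(s) = a - \delta'$ this integrand is $O(r(1+|t|)^{\kappa-1}X^{a-\delta'})$ for $|t|$ large, yielding a total vertical contribution of $O(rX^{a-\delta'}T^\kappa)$; the two horizontal segments at $|\Im s| = T$ contribute $O(rT^{\kappa-1}X^a/\log X)$. Combining these with $R(X,T)$ gives total error
$$O\!\left(rX^{a-\delta'}T^\kappa + rT^{\kappa-1}X^a/\log X + X^a(\log X)/T\right).$$
Choosing $T$ as a suitable positive power of $X$ and $\delta'$ a small multiple of $\delta$ depending only on $\kappa$ balances these three terms against the target bound $O(rX^{a-\delta})$, with implicit constant depending only on $\delta, \delta_0, \kappa, b$.

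The main point of the argument, and the only refinement over \cite[App.~A]{chambert-loir_tschinke_2001}, is that the constant $r$ must emerge linearly in the final error rather than being absorbed into an implicit constant. This is essentially bookkeeping and works because $r$ appears only through the bound on $f$, and hence only in the shifted-contour estimates, while the residue at $s = a$ depends only on $\Theta$ and so contributes no $r$-dependence to the main term.
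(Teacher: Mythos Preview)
Your direct Perron-plus-contour-shift approach has a genuine gap when $\kappa \geq 1$. After shifting to $\Re(s)=a-\delta'$, the two horizontal segments at height $|\Im s| = T$ contribute, by your own estimate, $O(rT^{\kappa-1}X^a/\log X)$. For $\kappa \geq 1$ this is at least $rX^a/\log X$ regardless of how $T$ is chosen, which is never $O(rX^{a-\delta})$. Similarly, the vertical segment forces $T \ll X^{(\delta'-\delta)/\kappa}$ while the Perron remainder forces $T \gg X^{\delta}$; these constraints are already in tension, and once $\kappa \geq 1$ the horizontal term kills the argument outright. The sentence ``choosing $T$ as a suitable positive power of $X$ \dots\ balances these three terms'' is precisely where the proof breaks: no such choice exists for general $\kappa$.

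The paper avoids this obstruction by first proving the asymptotic not for $N(X)=\phi_0(X)$ but for the smoothed sums
\[
\phi_k(X)=\sum_{\lambda_n\leq X} c_n\bigl(\log(X/\lambda_n)\bigr)^k
\]
with $k>\kappa$. For these, the Mellin representation $\phi_k(X)=\frac{k!}{2\pi i}\int f(s)X^s\,s^{-(k+1)}\,ds$ is \emph{absolutely convergent} on vertical lines, since the extra factor $s^{-k}$ beats the $(1+|\Im s|)^\kappa$ growth from hypothesis (4). One can therefore shift the entire infinite contour with no truncation and no horizontal segments, and the shifted integral is $O(rX^{a-\delta})$ directly. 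The descent from $\phi_k$ down to $\phi_0=N$ is then carried out inductively via the sandwich inequality
\[
\frac{\phi_k(X(1-\eta))-\phi_k(X)}{\log(1-\eta)} \;\leq\; k\,\phi_{k-1}(X) \;\leq\; \frac{\phi_k(X(1+\eta))-\phi_k(X)}{\log(1+\eta)},
\]
valid because the $c_n$ are nonnegative, with $\eta$ taken as a small negative power of $X$. This positivity-based differencing is the genuinely Tauberian step, and it is what your argument is missing.
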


We define, for positive integers $k$, the function $$\phi_k(X)=\sum_{\lambda_n \leq X}a_n(\log(X/\lambda_n))^k,$$ and define $\phi_0(X)=N$. 

\begin{lemma}
With the hypothesis of the previous theorem, there exists for integers $k > \kappa$ a polynomial $Q$ of degree $b-1$ with leading coefficient $\frac{k! \Theta}{a^{k+1}(b-1)!}$ so that for $\delta < \delta_0$, we have $$\phi_k(X)=X^a Q( \log X) + O(rX^{a-\delta}),$$ where the implied constant is independent of $f$. 
\end{lemma}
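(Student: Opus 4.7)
The plan is to prove this by Mellin/Perron inversion followed by a contour shift, which is the standard route for quantitative Wiener--Ikehara statements and is exactly what Chambert-Loir--Tschinkel do in the cited appendix.

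First, I would represent $\phi_k(X)$ as a vertical contour integral. Starting from the identity
\[
\frac{1}{2\pi i}\int_{c-i\infty}^{c+i\infty} \frac{(X/y)^s}{s^{k+1}}\,ds \;=\; \frac{(\log(X/y))^k}{k!}\cdot \mathbf{1}_{y<X}
\]
(valid for $c>0$ and $k\geq 1$), and since for any $c>a$ the series $f(s)=\sum c_n\lambda_n^{-s}$ converges absolutely on $\Re(s)=c$ while the integrand decays like $|t|^{-(k+1)}$, Fubini gives
\[
\phi_k(X) \;=\; \frac{k!}{2\pi i}\int_{c-i\infty}^{c+i\infty} f(s)\,\frac{X^s}{s^{k+1}}\,ds.
\]

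Next I would shift the line of integration from $\Re(s)=c$ to $\Re(s)=a-\delta$, using a rectangular contour with corners $c\pm iT,\ (a-\delta)\pm iT$ and letting $T\to\infty$. On the horizontal segments $|\Im(s)|=T$, the ratio $|s/(s-a)|^b$ is bounded uniformly in $\Re(s)\in[a-\delta,c]$ for $T$ large, so the growth hypothesis yields $|f(s)|=O(r T^\kappa)$; combined with $|X^s/s^{k+1}|=O(T^{-(k+1)}\max(X^c,X^{a-\delta}))$ and the fact that the horizontal length is fixed, the horizontal contributions are $O(rT^{\kappa-k-1})\to 0$ because $k>\kappa$. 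Since $a-\delta>0$, the only singularity crossed is the pole of $f$ of order $b$ at $s=a$.

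I would then compute the residue to identify the main term. Writing $g(s):=(s-a)^b f(s)$, which is holomorphic at $a$ with $g(a)=\Theta$, Leibniz gives
\[
\mathop{\mathrm{Res}}_{s=a} f(s)\frac{X^s}{s^{k+1}} \;=\; \frac{1}{(b-1)!}\left.\frac{d^{b-1}}{ds^{b-1}}\!\left[g(s)\,\frac{X^s}{s^{k+1}}\right]\right|_{s=a} \;=\; X^a\, \widetilde Q(\log X),
\]
where $\widetilde Q$ is a polynomial of degree $b-1$ in $\log X$; the top-degree term arises by putting all $b-1$ derivatives on $X^s$, contributing $g(a)\cdot a^{-(k+1)}\cdot (\log X)^{b-1}/(b-1)!=\Theta(\log X)^{b-1}/((b-1)!a^{k+1})$. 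Multiplying by the prefactor $k!$ from Perron's formula produces a polynomial $Q$ of degree $b-1$ with leading coefficient $k!\Theta/((b-1)!a^{k+1})$, as required.

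Finally I would bound the shifted integral. On $\Re(s)=a-\delta$, $|X^s|=X^{a-\delta}$, $|s/(s-a)|^b$ is bounded by a constant $C(a,\delta,b)$, so the hypothesis gives $|f(s)|\leq C(a,\delta,b)\,r(1+|t|)^\kappa$; together with $|s|^{-(k+1)}\asymp(1+|t|)^{-(k+1)}$, the integrand is majorized by $C\,r\,X^{a-\delta}(1+|t|)^{\kappa-k-1}$, and since $k>\kappa$ this is integrable with integral $O(1)$ independent of $f$. This yields the error $O(rX^{a-\delta})$ with an implied constant depending only on $a,b,\delta,k,\kappa$. There is no real obstacle here; the only things requiring mild care are the Fubini interchange in Step 1 and making sure that every constant that gets absorbed in the $O$-notation depends only on the abstract parameters $(a,b,\delta,k,\kappa)$ and not on $f$ or its specific coefficients, so that the factor of $r$ is truly the only $f$-dependence in the error.
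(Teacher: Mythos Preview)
Your proposal is correct and follows essentially the same approach as the paper: represent $\phi_k(X)$ via the Mellin--Perron integral $\frac{k!}{2\pi i}\int_{c+i\bbR} f(s)X^s s^{-(k+1)}\,ds$, shift the contour to $\Re(s)=a-\delta$, extract the residue at $s=a$ to obtain the polynomial main term, and bound the remaining integral using the growth hypothesis on $f(s)(s-a)^b/s^b$. Your treatment is in fact more explicit than the paper's on two points the paper leaves implicit---the vanishing of the horizontal segments and the Leibniz computation identifying the leading coefficient---but the underlying argument is the same.
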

\begin{proof}
For any $a' > a$, we have $$\int_{a'+i\bbR}\lambda^s \frac{ds}{s^{k+1}} = \frac{2\pi i}{k!}(\log^+(\lambda))^k, \lambda > 0.$$ Consequently, $$\phi_k(X)=\frac{k!}{2 \pi i}\int_{a' + i\bbR}f(s)X^s \frac{ds}{s^{k+1}},$$ where the integral is absolutely convergent as $k>\kappa$.

We now shift the contour of integration to $\Re(s)=a-\delta$. We pick up a pole at $s=a$ with residue $\frac{\Theta}{a^{k+1}(b-1)!}X^a Q(\log X)$, where $Q$ is a monic polynomial of degree $b-1$. The resulting integral $$\int_{\Re(s)=a-\delta} f(s)X^s \frac{ds}{s^{k+1}}$$ is an error term, and here we use the bound $|f(s) \frac{(s-a)^b}{s^b}| \leq r(1+\Im(s))^{\kappa}$ and obtain that the integral is $O(rX^{a-\delta})$.
\end{proof}

Now we may prove Theorem \ref{thm:appendix_wiener_ikehara_with_error}.
\begin{proof}
We prove by inductive descent on $k$ that the previous lemma is in fact true for all integers $k$. For $k=0$, the theorem is trivially proven. Now assuming the previous lemma holds for some fixed $k\geq 1$, we show that it is in fact true for $k-1$.

For any $\eta \in (0,1)$, we have the inequality $$\frac{\phi_k(X(1-\eta))-\phi_k(X)}{\log(1-\eta)} \leq k \phi_{k-1}(X) \leq \frac{\phi_k(X(1+\eta))-\phi_k(X)}{\log(1+\eta)}.$$ Now fix $\delta'$ with $0 < \delta < \delta' < \delta_0$. By the previous lemma, $$|\phi_k(X)-X^a Q( \log X)| = O(rX^{a-\delta'}).$$ Consequently, for $-1 < u < 1$,  $$\frac{\phi_k(X(1+u))-\phi_k(X)}{\log(1+u)}=\frac{k! \Theta}{a^{k+1}(b-1)!}X^a \frac{Q(\log X + \log (1+u))(1+u)^a-Q(\log X)}{\log(1+u)}+R(X),$$ where $$|R(X)|=O(\frac{2rX^{a-\delta'}}{|\log(1+u)|})=O(r\frac{X^{a-\delta'}}{u}).$$ Now we have a series of equalities. \begin{align*}
    &\frac{Q(\log X + \log (1+u))(1+u)^a-Q(\log X)}{\log(1+u)}\\
    &=Q(\log X)\frac{(1+u)^a -1}{\log(1+u)}+\sum_{n=1}^{b-1}\frac{1}{n!}Q^{(n)}(\log(X))(\log(1+u)^{n-1}(1+u)^a)\\
    &=Q(\log X)(a+O(u))+Q'(\log(X))(1+O(u))+O((\log X)^{b-1} u)\\
    &=(aQ+Q')(\log X)+O((\log X)^{b-1}u).
\end{align*} Take $u=\pm 1/X^{\epsilon}$, where $\delta'-\epsilon > \delta$. Then, we have $|R(X)|=O(rX^{a-\delta})$ and $$\frac{Q(\log X + \log (1+u))(1+u)^a-Q(\log X)}{\log(1+u)}=(aQ+Q')(\log X)+O(rX^{-\delta}).$$ Thus, we have that $\phi_{k-1}(X)=\frac{1}{k} X^a (aQ+Q')(\log X)+O(rX^{a-\delta})$ and the leading coefficient of $\frac{1}{k}(aQ+Q')$ is precisely $\frac{(k-1)!\Theta}{a^k (b-1)!}$.
\end{proof}

\printbibliography

\end{document}